\documentclass[10pt]{amsart}
\usepackage{a4wide}
\usepackage{float,epsfig,hyperref,graphicx,graphics,mathrsfs,dsfont}
\usepackage{color,cite,epsfig,epstopdf,amsmath,amssymb,amsfonts}
\theoremstyle{definition}
\newtheorem{dfn}{Definition}[section]
\newtheorem{exm}{Example}[section]
\newtheorem{thm}{Theorem}[section]
\newtheorem{cor}{Corollary}[section]

\newtheorem{lm}{Lemma}[section]
\newtheorem{nt}{Note}[section]

\newcommand{\myproduct}[2]{#1=\prod_{k=1}^m#2}
\newcommand{\myseries}[3]{#1\leq #2\leq #3}
\newcommand{\myvec}[3]{#1=(#2_1,#2_2,\ldots,#2_#3)}
\newcommand{\norm}[2]{\lVert #1 \rVert_#2}

\newcommand{\profunc}[1]{#1_1\times#1_2\times\cdots\times#1_m}
\newcommand{\myfunction}[3]{#1:#2\rightarrow#3}
\newcommand{\prodifs}[1]{#1_{i_1i_2\cdots i_m}}
\newcommand{\prodpoint}[2]{#1_{1,#2_1},#1_{2,#2_2},\ldots,#1_{m,#2_m}}
\title{Study of attractors and fractal functions on the product spaces and Dimensional aspects}

\author{Alamgir Hossain}
\address{Department of Mathematics, Presidency University, 86/1, College Street, Kolkata, 700 073, West Bengal, India}
\curraddr{}
\email{hossain4791@gmail.com}

\begin{document}
	\pagestyle{plain}	
	\maketitle	
	
	\begin{abstract}
		In this paper, the product of the Hausdorff metric on the product space is defined and the equivalency between the product Hausdorff metric and the Hausdorff metric on the product space is established. The finite product of the iterated function systems (IFS) on the product space is considered and the relation between the attractor of the product IFS and the attractors of the co-ordinate IFSs is studied. Dimension bounds of the  homogeneous and inhomogeneous attractors on the product space is established. Also, the product fractal interpolation function on the higher dimensional space is constructed.
	\end{abstract}
	\textbf{Key word:} Product Space, Hausdorff Metric, Homogeneous and Inhomogeneous Iterated Function Systems, Fractal Dimensions, Fractal Interpolation Function.\\
	\subjclass{{\bf AMS Subject Classifications:} 28A80; 26A18; 65D05}
	\section{Introduction}
	A fractal can be described as an object which exhibits interesting features on a large range of scales. In pure mathematics,  the middle third Cantor set, the Sierpinski triangle, the boundary of the Mandelbrot set, and the von Koch snowflake are archetypal examples and, in `real life', examples include the surface of a lung, the horizon of a forest, and the distribution of stars in the galaxy. The term `fractal' was first used by mathematician Benoit Mandelbrot \cite{FGN} in 1975. Mandelbrot based it on the Latin fractus, meaning `broken' or `fractured', and used it to extend the concept of theoretical fractional dimensions to geometric patterns in nature. In mathematics, an iterated function system (IFS) is a method of constructing fractal. IFS fractals are more related to set theory than fractal geometry. The notion of dimension is central to fractal geometry. A dimension is a (usually non-negative real) number which gives geometric information concerning how the object in question fills up the space on small scales. There are many distinct notions of dimension and one of the joys (and central components) of the subject is in understanding how these notions relate to each other.  One way of constructing new fractal from old is by forming Cartesian products in the higher dimension spaces. Yimin xiao \cite{hpdoncp}, J. M. Marstrand \cite{tdocpset}, A. S. Besicovitch and P. A. P. Moran \cite{dotsset} and J. D. Howroyd \cite{PD} studied some certain results on various dimensions of the product set in the product spaces.   \\
	
	Formally, an IFS is a collection of finite set of contraction mappings on a complete metric space. Hutchinson \cite{AOIFS} in 1981 showed that for a complete metric space $X$, such a system of functions has a unique nonempty compact fixed set S. This set is called an attractor (or some time it is called homogeneous attractor) of an IFS. In \cite{ifsabcof}, M. F. Barnsley and S. Demko introduce a new concept of an IFS with condensation set. They have shown that this systems also have a unique fixed set. This set is called inhomogeneous attractor of an IFS with condensation. In general, the inhomogeneous attractor is not similar to homogeneous attractor. But, if the condensation set is empty, then it  coincide with homogeneous attractor of an IFS. Nina Snigireva \cite{Inssm} has given a nice relation between the inhomogeneous and homogeneous attractors of an IFS with condensation set. In the Lemma~\ref{Rbhia}, we proved this relation in a different way, which is not similar to the proof given in \cite{Inssm}. Indeed, many fractals that occur in practice, for example an attractor of an IFS, are product or, at least, are locally product-like. But in general the attractor in higher dimensional spaces are difficult to study or even to visualize. Thus study of geometrical properties of the attractor of an IFS in a higher dimensions spaces with respect to the co-ordinate IFSs is relevant. P. F. Duvall, L. S. Husch \cite{afifs} introduced the product of two hyperbolic IFSs and studied their properties. In \cite{onnmifs}, Rinju Balu and Mathew introduced (n,m)-IFSs, which generate attractors that lie in the product space. Also, in \cite{spoifs}, R. K. Aswathy and S. Mathew studied the separation properties of the products of IFSs. Interpolating a non-differential function which passes through some specific data points on a plane is very engrossing. M. F. Barnsley in 1986, first introduced the fractal interpolation function in his airtle \cite{Ftmapp}. M. A. Navascues \cite{ftaf,FSH,FFONS} studied the approximation theory of the fractal interpolation functions. P. Massopust \cite{fhsandawg} defined the fractal hypersurfaces on higher dimension space. M. N. Akhtar, et al. \cite{bdoaff,bdoaffwvsf} studied the box-dimension of an $\alpha$-fractal function.\\
	
	In this article, we discussed the finite products of hyperbolic IFSs in a different setting. Using the similar idea, we defined the product fractal function on the higher dimension space. Also, we established the approximation properties of the product fractal function.  We discussed about some preliminaries and existing results on dimensions in Section~\ref{preli} and Section~\ref{exrofdofps}. In Section~\ref{pohm}, we introduced the product Hausdorff metric and we  have shown that the equivalency between the product Hausdorff metric and the Hausdorff metric on the product space and also, we studied about the product of IFSs and homogeneous attractor of the product IFS and we provided result on the dimension of the homogeneous attractor of the product IFS. In Section~\ref{secforina}, we discussed about  product of inhomogeneous attractor  of the product IFS with condensation set and studied the dimension of the inhomogeneous attractor of the product IFS. In Section~\ref{secpfs}, we established some results of product fractal function on the higher dimension space. 
	
	\section{Some notations and preliminary results}
	\label{preli}
	In this section, we recall some basic definitions of product metric,Cartesian product of the functions, Hausdorff metric, iterated function system and Hutchinson operator that are needed in the sequel. We recommend the reader to consult the references \cite{TMS, Topo, FG, FE} for further details.
	\subsection{Product metric}
	Let $(X_k,d_k)$ for $k=1,2,\ldots,m,$  be the metric spaces and $X=\prod_{k=1}^{m}X_k$, be the Cartesian product of $X_k$. Then the product metric on $X$, is denoted by $d_0$ and is defined by
	$$d_0(\vec{x},\vec{y})=\left\{\sum_{k=1}^{m}(d_k(x_k,y_k))^2\right \}^{\frac{1}{2}} \quad  \mbox{for all} \quad \vec{x}=(x_1,x_2,\ldots,x_m),\vec{y}=(y_1,y_2,\ldots,y_m)\in X.$$
	If each metric space $(X_k,d_k)$ is complete then $(X,d_0)$ is also complete (see \cite{IFAA}). Let $\pi_k:X\rightarrow X_k$ be the projection map on $X_k$ for $k=1,2,\ldots,m$.
	\subsection{Cartesian product of the functions} Let $f_k:X_k\rightarrow Y_k$ for $\myseries{1}{k}{m}$, be the given functions, then product function from $\myproduct{X}{X_k}$ to $\myproduct{Y}{Y_k}$, is denoted by $\profunc{f}$, is defined by  $$\profunc{f}(\vec{x})=(f_1(x_1),f_2(x_2),\ldots,f_m(x_m))\quad\mbox{for}~\myvec{\vec{x}}{x}{m}\in X.$$
	If each $f_k$ for $\myseries{1}{k}{m}$, are continuous function, then product function $\profunc{f}$ is also continuous function with respect to product metric.
	\subsection{Hausdorff metric}
	Let $\delta>0$ and $(Y,d)$ be a metric space. Recall that the $\delta$-neighbourhood of a subset $A$ of $Y$ is the set of points within distance $\delta$ of $A$, that is $A_\delta=\{y\in Y: d(y,a)\leq\delta~\mbox{for some}~a \in A\}.$ Let $\mathscr{H}(Y)$ denote the collections of all nonempty compact subsets of $Y$. Then the Hausdorff distance (see\cite{FG}) between the sets $A$ and $B$ in $\mathscr{H}(Y)$, denoted by $h$, is defined by
	\begin{equation*}
		h(A,B)=\inf\left\{\delta:A\subset B_\delta~\mbox{and}~B\subset A_\delta\right\}.
	\end{equation*}
	If $(Y,d)$ is a complete metric space, then $\left ( \mathscr{H}(Y), h \right)$ is also a complete metric space (see \cite{FG,FE}).
	\begin{lm} \cite{FE}
		If $\left\{C_i\right\}_{i\in\Lambda}$, $\left\{D_i\right\}_{i\in\Lambda}$ are two finite collections of sets in $\left ( \mathscr{H}(Y), h \right)$, then $$h\left(\cup_{i\in\Lambda} C_i,\cup_{i\in\Lambda} D_i\right)\leq \max_{i\in\Lambda}h(C_i,D_i).$$
		\label{hmet}
	\end{lm}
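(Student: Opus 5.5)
The plan is to work directly from the neighbourhood definition of the Hausdorff distance. Set $C=\cup_{i\in\Lambda}C_i$ and $D=\cup_{i\in\Lambda}D_i$. Since $\Lambda$ is finite, $C$ and $D$ are nonempty compact sets, so $C,D\in\mathscr{H}(Y)$ and $h(C,D)$ makes sense. Put $M=\max_{i\in\Lambda}h(C_i,D_i)$; the maximum exists because $\Lambda$ is finite. I will show that for every $\delta>M$ the inclusions $C\subset D_\delta$ and $D\subset C_\delta$ hold. Then $h(C,D)\leq\delta$ for all $\delta>M$, and hence $h(C,D)\leq M$.

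Fix $\delta>M$. For each $i\in\Lambda$ we have $h(C_i,D_i)\leq M<\delta$. By the definition of $h$ as an infimum, there is some $\delta_i<\delta$ with $C_i\subset (D_i)_{\delta_i}$ and $D_i\subset (C_i)_{\delta_i}$.

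The neighbourhoods are monotone: if $\delta_i\leq\delta$ then $(D_i)_{\delta_i}\subset (D_i)_\delta$, and if $D_i\subset D$ then $(D_i)_\delta\subset D_\delta$. Both facts follow at once from $A_\delta=\{y: d(y,a)\leq\delta \text{ for some } a\in A\}$. Combining them gives $C_i\subset D_\delta$ for every $i$, and so $C=\cup_i C_i\subset D_\delta$. The symmetric argument gives $D\subset C_\delta$.

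There is no real obstacle. The only care needed is in passing from the infimum to an actual admissible $\delta_i$, which is why I work with an arbitrary $\delta>M$ rather than with $M$ itself. One could also check that the infimum is attained for compact sets, but that is not needed.
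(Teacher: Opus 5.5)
Your proof is correct and complete. Fixing $\delta>M$, you extract from the infimum an admissible $\delta_i<\delta$ for each pair. Since $A_\delta$ is monotone both in $\delta$ and in $A$, this gives $C\subset D_\delta$ and $D\subset C_\delta$, hence $h(C,D)\leq\delta$ for every $\delta>M$, which is all that is required.

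The paper does not prove this lemma; it only quotes it from \cite{FE}, so there is no in-paper argument to compare with. The standard textbook proof uses the one-sided distance $\max_{a\in A}\min_{b\in B}d(a,b)$ rather than neighbourhoods. For two pairs, the one-sided distance from $C_1\cup C_2$ to $D_1\cup D_2$ is the larger of the distances from $C_1$ and from $C_2$ to $D_1\cup D_2$. Each of these is at most the distance from $C_j$ to $D_j$, and one then inducts on the number of pairs.

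Your version works directly with the neighbourhood definition of $h$ that the paper adopts. It handles all of $\Lambda$ at once with no induction, and uses compactness only to ensure the unions lie in $\mathscr{H}(Y)$. The textbook version is shorter once the max--min description of $h$ is available, but compactness is needed to justify that description.

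Your remark that the infimum is attained for compact sets is true but, as you say, not needed. The only tacit hypothesis is $\Lambda\neq\emptyset$, which the statement already presupposes.
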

	\subsection{Homogeneous attractor}
	It is known that a mapping $f:Y\rightarrow Y$ on a complete metric space $(Y,d)$ is called contraction map if there exist a  constant $0\leq c<1,$ such that
	$$d(f(x),f(y))\leq c~ d(x,y)\quad \mbox{ for all} \quad x,y\in Y.$$ The constant $c$ is called the contractivity factor of $f$. If the equality holds in the above inequality, then $c$ is called the contraction similarity of $f$. Banach proved the following principle in 1922:
	
	\begin{thm}\cite{FPT,TMS} Let $f:Y\rightarrow Y$ be contraction mapping on a complete metric space $(Y,d)$. Then the map $f$ has a unique fixed point $y_0\in Y$. Moreover, for any point $y\in Y$, the sequence $\left\{f^n(y)\right\}_{n=1}^\infty$ converges to $y_0$. \label{fpt}
	\end{thm}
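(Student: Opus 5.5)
The plan is the standard Picard-iteration argument for the Banach fixed point theorem. Fix an arbitrary $y\in Y$ and set $y_n=f^n(y)$, with $y_0=y$ for the iteration only (reserving the notation $y_0$ of Theorem~\ref{fpt} for the limit at the end). I will first show that $\{y_n\}$ is Cauchy. By induction on $n$, the contraction inequality gives
\[
d(y_{n+1},y_n)\leq c^n d(y_1,y).
\]
Then for $n<p$ the triangle inequality and the geometric series give
\[
d(y_p,y_n)\leq \sum_{j=n}^{p-1} c^j d(y_1,y)\leq \frac{c^n}{1-c}\,d(f(y),y).
\]
Since $0\leq c<1$, the right-hand side tends to $0$ as $n\to\infty$, uniformly in $p$. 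Hence the sequence is Cauchy, and by completeness of $(Y,d)$ it converges to some point, which I call $y^\ast$.

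Next I will check that $y^\ast$ is fixed. A contraction is Lipschitz, hence continuous, so
\[
f(y^\ast)=\lim f(y_n)=\lim y_{n+1}=y^\ast.
\]
Alternatively, one can avoid invoking continuity and estimate directly:
\[
d(f(y^\ast),y^\ast)\leq c\,d(y^\ast,y_n)+d(y_{n+1},y^\ast)\to 0.
\]

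For uniqueness, suppose $u,v$ are both fixed points. Then
\[
d(u,v)=d(f(u),f(v))\leq c\,d(u,v),
\]
so $(1-c)d(u,v)\leq 0$. Since $c<1$, this forces $d(u,v)=0$, that is, $u=v$.

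Finally, the starting point $y$ was arbitrary, and by uniqueness every limit produced this way equals the same fixed point. This gives both the existence of a unique fixed point $y_0$ and the convergence $f^n(y)\to y_0$ for every $y\in Y$. The only step needing care is the Cauchy estimate, and in particular making sure the bound is uniform in $p$; the geometric tail bound handles this. The case $c=0$ causes no trouble, since then $f$ is constant and all the estimates hold trivially.
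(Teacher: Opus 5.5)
Your proof is correct. It is the standard Picard-iteration argument: a geometric bound on successive distances, a Cauchy tail estimate uniform in $p$, completeness to get a limit, continuity (or your direct estimate) to show the limit is fixed, and the contraction inequality for uniqueness. The paper gives no proof of its own here; it only cites the Banach contraction principle from its references, where this is the usual argument.
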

	
	\begin{dfn}
		A (hyperbolic) iterated function system  consists of a complete metric space $(Y,d)$ together with a finite set of contraction mappings $f_i: Y\rightarrow Y$ with respect to the contractivity factors $c_i$ for $i=1,2,\ldots,N$, where $N\in\mathbb{N}$.	
	\end{dfn}
	Now from Lemma~\ref{hmet}, we get the following theorem.
	
	\begin{thm}\cite{FG}
		Let $\left\{(Y;f_i):i=1,2,\ldots,N\right\}$ be an IFS, where $f_i$'s are contraction maps with contractivity factors $c_i$ for $i=1,2,\ldots,N$. Then the Hutchinson operator $f:\mathscr{H}(Y)\rightarrow \mathscr{H}(Y)$ defined by $f(B)=\bigcup_{i=1}^Nf_i(B)$ for $B\in \mathscr{H}(Y)$, is also a contraction map with respect to the Hausdorff metric $h$ with contractivity factor $c=\max_{1\leq i \leq N} \{c_i\}$.
		\label{conth}
	\end{thm}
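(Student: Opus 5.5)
The plan is to prove the single-map estimate first and then pass to unions with Lemma~\ref{hmet}. Before that, we check that $f$ maps $\mathscr{H}(Y)$ into itself: each $f_i$ is Lipschitz, hence continuous, so $f_i(B)$ is nonempty and compact for $B\in\mathscr{H}(Y)$, and a finite union of nonempty compact sets is again nonempty and compact.

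\emph{Single map.} Fix $B,C\in\mathscr{H}(Y)$ and any $\delta$ with $B\subset C_\delta$ and $C\subset B_\delta$. Given $y=f_i(b)$ with $b\in B$, choose $c\in C$ with $d(b,c)\leq\delta$. Then $d(f_i(b),f_i(c))\leq c_i\delta$, so $f_i(B)\subset (f_i(C))_{c_i\delta}$. The symmetric argument gives $f_i(C)\subset (f_i(B))_{c_i\delta}$. Hence $h(f_i(B),f_i(C))\leq c_i\delta$. Taking the infimum over all admissible $\delta$ yields
\[
h(f_i(B),f_i(C))\leq c_i\, h(B,C).
\]
If $h(B,C)=0$ then $B=C$, since the sets are closed, and the bound is trivial. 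Note that the $\delta$-neighbourhoods in the paper use ``$\leq$'', so the admissible $\delta$ form a closed-from-above set; either way, passing to the infimum is harmless.

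\emph{Unions.} Apply Lemma~\ref{hmet} with $C_i=f_i(B)$ and $D_i=f_i(C)$ for $i=1,\ldots,N$:
\[
h(f(B),f(C))\leq\max_{1\leq i\leq N}h(f_i(B),f_i(C))\leq\max_{1\leq i\leq N}c_i\,h(B,C)=c\,h(B,C).
\]
Since each $c_i<1$ and there are only finitely many of them, $c<1$. So $f$ is a contraction on $(\mathscr{H}(Y),h)$ with contractivity factor $c$.

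The only mildly delicate step is the single-map estimate, specifically passing from the neighbourhood inclusions to the infimum defining $h$. Everything else follows directly from Lemma~\ref{hmet}.
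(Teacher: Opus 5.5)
Your proposal is correct and follows the route the paper indicates: the paper gives no detailed proof, only deriving the result ``from Lemma~\ref{hmet}'' with a citation to \cite{FG}. Your single-map estimate $h(f_i(B),f_i(C))\leq c_i\,h(B,C)$ is exactly the step left implicit there before the union bound is applied; as a cosmetic point, you use $c$ both for a point of $C$ and for $\max_i c_i$, which is worth renaming.
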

	If $(Y,d)$ is a complete metric space, then by the Theorem~\ref{fpt}, $f$ has a unique fixed point $B\in\mathscr{H}(Y)$. This fixed point is called an \textbf{attractor} or \textbf{homogeneous attractor} of the IFS $\left\{(Y;f_i):i=1,2,\ldots,N\right\}$. Throughout the sequel, we assume that all the metric spaces are complete.
	
	\subsection{Inhomogeneous attractor}
	\label{rbhainhatt}
	Let $\{(Y;f_i):i=1,2,\ldots,N\}$ be the IFS on $Y$, with contractivity factors $r_i, i=1,2,\ldots,N$. Let $\mathbb{N}_N=\{1,2,\ldots,N\}$ and $\mathbb{N}_N^*=\bigcup_{n=1}^{\infty}\mathbb{N}_N^n$, where $\mathbb{N}_N^n$ denotes the family of all strings $\textbf{i}=i_1i_2\ldots i_n$ of length $n$ with entries $i_j\in \mathbb{N}_N$. Finally, for $\textbf{i}=i_1i_2\ldots i_n\in \mathbb{N}_N^n$, we write
	$$f_{\textbf{i}}=f_{i_1}f_{i_2}\cdots f_{i_n}\quad\mbox{and}\quad r_{\textbf{i}}=r_{i_1}r_{i_2}\cdots r_{i_n}.$$
	
	There is an another important way of making contraction mapping on $\mathscr{H}(Y)$, where $(Y,d)$ is a complete metric space.
	\begin{dfn}(see \cite{FE,Inssm})
		Let $(Y,d)$ be a metric space and  $C\in \mathscr{H}(Y)$. Define a set valued transformation $f_0:\mathscr{H}(Y)\rightarrow \mathscr{H}(Y)$ by $f_0(B)=C$ for all $B\in \mathscr{H}(Y)$. Then $f_0$ is called a \textbf{condensation transformation} and $C$ is called the associated \textbf{condensation set}.
	\end{dfn}
	\begin{nt}
		Observe that a condensation transformation $f_0:\mathscr{H}(Y)\rightarrow \mathscr{H}(Y)$ is contraction map on the space $(\mathscr{H}(Y),h)$, with contractivity factor equal to zero, and it possesses a unique fixed point, namely the condensation set.
	\end{nt}
	\begin{dfn}
		Let $\{(Y;f_i):i=1,2,\ldots,N\}$ be the hyperbolic IFS with contractivity factors $0\leq r_i<1$ and $f_0:\mathscr{H}(Y)\rightarrow \mathscr{H}(Y)$ be a condensation transformation. Then $\{(Y;f_i):i=0,1,2,\ldots,N\}$ is called a hyperbolic IFS with condensation, with contractivity factors $0\leq r_i<1$.
	\end{dfn}
	\begin{thm}(see \cite{FE,Inssm,Insas}) Let $\{(Y;f_i):i=0,1,2,\ldots,N\}$  be a hyperbolic IFS with condensation set $C$ and contractivity factors $0\leq r_i<1$. Then the transformation $W_{\theta}:\mathscr{H}(Y)\rightarrow \mathscr{H}(Y)$ defined by
		\begin{equation*}
			W_{\theta}(B)=\bigcup_{i=0}^Nf_i(B) \quad \mbox{for all} \quad B\in \mathscr{H}(Y)
		\end{equation*}
		is a contraction mapping on the complete metric space $(\mathscr{H}(Y),h)$ with contractivity factor $r=\max\{r_i\}$. Its unique fixed point $F_C\in \mathscr{H}(Y)$, obeys
		$$F_C=W_{\theta}(F_C)=\bigcup_{i=0}^Nf_i(F_C)$$ and is given by $$F_C=\lim\limits_{n\rightarrow\infty}W_{\theta}^n(B)\quad\mbox{for any}~ B\in \mathscr{H}(Y).$$
		\label{inhomconstrec}
	\end{thm}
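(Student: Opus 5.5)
The plan is to reduce everything to Lemma~\ref{hmet} and the Banach fixed point principle, Theorem~\ref{fpt}. First I will check that $W_{\theta}$ really maps $\mathscr{H}(Y)$ into itself. For $B\in\mathscr{H}(Y)$, each $f_i(B)$ with $i\geq 1$ is nonempty and compact, because $f_i$ is a contraction and hence continuous. Also $f_0(B)=C$ is nonempty and compact by assumption. A finite union of nonempty compact sets is nonempty and compact, so $W_{\theta}(B)\in\mathscr{H}(Y)$.

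Next I will prove the contraction estimate.

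\emph{Step 1 (each map $f_i$, $i\geq 1$).} For $B,D\in\mathscr{H}(Y)$, I will show $h(f_i(B),f_i(D))\leq r_i\,h(B,D)$. Suppose $B\subset D_\delta$ and $D\subset B_\delta$. For $b\in B$ there is $a\in D$ with $d(b,a)\leq\delta$, and then $d(f_i(b),f_i(a))\leq r_i\delta$. Hence $f_i(B)\subset (f_i(D))_{r_i\delta}$, and symmetrically $f_i(D)\subset (f_i(B))_{r_i\delta}$. Taking the infimum over admissible $\delta$ gives the claim. This is the same estimate that underlies Theorem~\ref{conth}.

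\emph{Step 2 (the condensation map).} For $i=0$ we have $h(f_0(B),f_0(D))=h(C,C)=0$.

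\emph{Step 3 (combining).} Applying Lemma~\ref{hmet} to the finite families $\{f_i(B)\}_{i=0}^N$ and $\{f_i(D)\}_{i=0}^N$ gives
\[
h(W_{\theta}(B),W_{\theta}(D))\leq \max_{0\leq i\leq N} h(f_i(B),f_i(D))\leq \max_{0\leq i\leq N} r_i\, h(B,D)=r\,h(B,D).
\]
Here $r_0=0$, so $r<1$.

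Since $(Y,d)$ is complete, $(\mathscr{H}(Y),h)$ is complete, as recalled in the preliminaries. Theorem~\ref{fpt} then yields a unique fixed point $F_C\in\mathscr{H}(Y)$ with $F_C=W_{\theta}(F_C)=\bigcup_{i=0}^N f_i(F_C)$. The same theorem shows that $W_{\theta}^n(B)\to F_C$ for every starting set $B\in\mathscr{H}(Y)$.

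The only mildly delicate point is Step 1. There the infimum in the definition of $h$ must be handled carefully, and since the $\delta$-neighbourhoods here are closed, the infimum is attained for compact sets. Everything else is a direct appeal to the earlier results.
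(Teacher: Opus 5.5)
Your proof is correct, and it matches the route the paper relies on. The paper gives no proof of this statement and simply cites the literature; your argument is the standard one behind those references. Each $f_i$ is $r_i$-Lipschitz for $h$, $f_0$ is constant, Lemma~\ref{hmet} combines the pieces, and Theorem~\ref{fpt} applies on the complete space $(\mathscr{H}(Y),h)$. This is the same way the paper obtains Theorem~\ref{conth} from Lemma~\ref{hmet}.

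One small remark: you do not need the infimum to be attained in Step 1. Taking the infimum of $r_i\delta$ over admissible $\delta$ already gives $h(f_i(B),f_i(D))\leq r_i\,h(B,D)$. If $r_i=0$, use $r_i\delta+\epsilon$ so the neighbourhood radii stay positive.
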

	This unique fixed point $F_C$ is called the \textbf{Inhomogeneous attractor} (with condensation set $C$). If we take $C=\emptyset$, then the attractor $F_{\emptyset}$  coincide with homogeneous attractor of the IFS  $\{(Y;f_i):i=1,2,\ldots,N\}$.
	\begin{exm}
		Let $K$ denote the triangle with vertices $(0,0)$, $(1,0)$ and $(\frac{1}{2},1)$ and the three maps are given by
		\begin{align*}
			f_1(x,y)=(\frac{x}{2},\frac{y}{2});\quad
			f_2(x,y)=(\frac{x}{2}+\frac{1}{2},\frac{y}{2});\quad
			f_3(x,y)=(\frac{x}{2}+\frac{1}{4},\frac{y}{2}+\frac{1}{2}).
		\end{align*}
		Let $f_0(x,y)=$ small circle located inside the triangle. Then Figure~\ref{fig3} represents the inhomogeneous attractor of this IFS.	
	\end{exm}
	\begin{figure}[!ht]
		\centering
		\includegraphics[angle=0, width=0.9\textwidth]{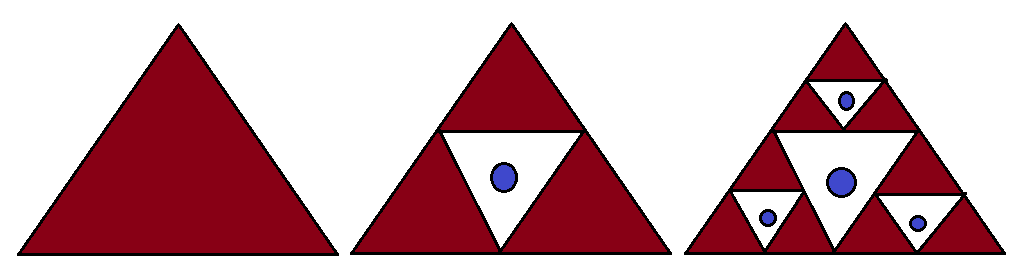}
		\caption{First three levels in the construction of Sierpinski triangle with condensation set.}
		\label{fig3}
	\end{figure}
	\subsection{Relation between homogeneous and inhomogeneous attractors}
	Define the orbital set, $O$, by
	$$O=C\cup\bigcup_{\textbf{i}\in \mathbb{N}_N^*} f_{\textbf{i}}(C),$$
	the union of the condensation set $C$, with all images of $C$ under compositions of maps in the IFS. The term orbital set was introduced by Nina Snigireva \cite{Inssm}. This set plays an important role in the structure of inhomogeneous attractors. The following lemma gives a relation in between the homogeneous and inhomogeneous attractors.
	\begin{lm}
		Let $F_C$  be the inhomogeneous attractor of an IFS $\{(Y;f_i):i=0,1,2,\ldots,N\}$ and $F_{\emptyset}$ be corresponding homogeneous attractor. Then $$F_C=O\cup F_{\emptyset}.$$
		\label{Rbhia} 	
	\end{lm}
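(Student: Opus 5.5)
The plan is to show that $O\cup F_{\emptyset}$ is a nonempty compact set fixed by $W_{\theta}$, and then invoke the uniqueness of the fixed point in Theorem~\ref{inhomconstrec}. If $C=\emptyset$ then $O=\emptyset$ and there is nothing to prove, so assume $C\neq\emptyset$. Throughout, let $r=\max_i r_i<1$.

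\textbf{Step 1: invariance.} Since $W_{\theta}(B)=C\cup\bigcup_{i=1}^N f_i(B)$ and unions commute with images, we get
$$W_{\theta}(O\cup F_{\emptyset})=C\cup\bigcup_{i=1}^N f_i(C)\cup\bigcup_{i=1}^N\bigcup_{\textbf{j}\in\mathbb{N}_N^*}f_{i\textbf{j}}(C)\cup\bigcup_{i=1}^N f_i(F_{\emptyset}).$$
Every string in $\mathbb{N}_N^*$ is either a single letter $i$ or of the form $i\textbf{j}$ with $\textbf{j}\in\mathbb{N}_N^*$. Hence the first three pieces together give exactly $O$. The last piece equals $F_{\emptyset}$ by Theorem~\ref{conth} and the definition of the homogeneous attractor. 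So $W_{\theta}(O\cup F_{\emptyset})=O\cup F_{\emptyset}$.

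\textbf{Step 2: compactness (the main obstacle).} The set $O$ is a countable union of compact sets, so it need not be closed a priori. The key estimate is a distance bound. Let $D=\sup_{x\in C}\operatorname{dist}(x,F_{\emptyset})<\infty$, which is finite because both sets are compact. For $\textbf{i}\in\mathbb{N}_N^n$ and $x\in C$, choose $a\in F_{\emptyset}$ nearest to $x$. Since $f_{\textbf{i}}(F_{\emptyset})\subset F_{\emptyset}$, we have $f_{\textbf{i}}(a)\in F_{\emptyset}$, and therefore
$$\operatorname{dist}(f_{\textbf{i}}(x),F_{\emptyset})\le d(f_{\textbf{i}}(x),f_{\textbf{i}}(a))\le r^nD.$$

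Now take any sequence $(y_k)$ in $O\cup F_{\emptyset}$ and consider two cases.
\begin{itemize}
\item Suppose infinitely many $y_k$ lie in $F_{\emptyset}\cup C\cup\bigcup_{|\textbf{i}|\le n}f_{\textbf{i}}(C)$ for some fixed $n$. This is a finite union of compact sets, and it is contained in $O\cup F_{\emptyset}$. So a subsequence converges to a point of $O\cup F_{\emptyset}$.
\item Otherwise, $y_k\in f_{\textbf{i}_k}(C)$ with $|\textbf{i}_k|\to\infty$. Pick $a_k\in F_{\emptyset}$ with $d(y_k,a_k)\le r^{|\textbf{i}_k|}D\to0$. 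By compactness of $F_{\emptyset}$, a subsequence of $(a_k)$ converges to some $a\in F_{\emptyset}$, and the corresponding subsequence of $(y_k)$ converges to the same $a$.
\end{itemize}
In either case we obtain a convergent subsequence with limit in $O\cup F_{\emptyset}$. Hence $O\cup F_{\emptyset}$ is sequentially compact, hence compact, and it is nonempty.

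\textbf{Step 3: conclusion.} By Steps 1 and 2, $O\cup F_{\emptyset}\in\mathscr{H}(Y)$ is a fixed point of the contraction $W_{\theta}$. By the uniqueness part of Theorem~\ref{inhomconstrec} (via Theorem~\ref{fpt}), $F_C=O\cup F_{\emptyset}$.
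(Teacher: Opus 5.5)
Your proof is correct, but it is organized differently from the paper's. The paper never checks invariance of $O\cup F_{\emptyset}$ directly. It iterates $W_{\theta}$ on an arbitrary $B\in\mathscr{H}(Y)$ and obtains, by continuing the computation step by step, the explicit formula
$$W_{\theta}^n(B)=C\cup\bigcup_{\mathbf{i}\in\bigcup_{p=1}^{n-1}\mathbb{N}_N^p}f_{\mathbf{i}}(C)\cup f^n(B).$$
It then lets $n\to\infty$ in the Hausdorff metric, using $f^n(B)\to F_{\emptyset}$ and $W_{\theta}^n(B)\to F_C$. That route is constructive and explains the level-by-level picture: the first $n-1$ levels of the orbital set appear at stage $n$, whatever $B$ is. Its final step, however, is where the real content hides. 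A Hausdorff limit is a closed set and does not distinguish a set from its closure, so the iteration honestly yields only $F_C=\overline{O}\cup F_{\emptyset}$. One still has to know that $\overline{O}\setminus O\subseteq F_{\emptyset}$, and the paper passes to the limit without addressing this. Your route replaces the limit computation by uniqueness of the fixed point. Your Step 2 estimate $\mathrm{dist}(f_{\mathbf{i}}(x),F_{\emptyset})\le r^{|\mathbf{i}|}D$ is precisely the fact needed to show that $O\cup F_{\emptyset}$ is closed, indeed compact. So your proof is self-contained and handles the closure issue explicitly, while the paper's is shorter and more illustrative but relies on that unstated point.
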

	\begin{proof} The proof was found in \cite{Inssm} (section-3, Lemma-3.9). But here we are giving a proof which is not similar to the existing one.
		For any $B\in \mathscr{H}(Y)$, we have $W_{\theta}(B)=\bigcup_{i=0}^Nf_i(B)$. Therefore,
		\begin{align*}
			W_{\theta}(B)=&f_0(B)\cup\bigcup_{i=1}^Nf_i(B)=f_0(B)\cup f(B)\\
			W_{\theta}^2(B)=&W_{\theta}(f_0(B))\cup W_{\theta}(f(B))\\
			=&f_0(B)\cup\bigcup_{i=1}^Nf_i(f_0(B))\cup\bigcup_{i=1}^Nf_i(f(B))\\
			=&f_0(B)\cup\bigcup_{\textbf{i}\in \mathbb{N}_N}f_{\textbf{i}}(f_0(B))\cup f^2(B)\\
			W_{\theta}^3(B)=&f_0(B)\cup\bigcup_{\textbf{i}\in \mathbb{N}_N}f_{\textbf{i}}(f_0(B))\cup\bigcup_{\textbf{i}\in \mathbb{N}_N^2}f_{\textbf{i}}(f_0(B))\cup f^3(B)\\
			=&f_0(B)\cup\bigcup_{\textbf{i}\in \bigcup_{p=1}^2\mathbb{N}_N^p}f_{\textbf{i}}(f_0(B))\cup f^3(B).	
		\end{align*}
		Continuing this process, we get for any $n\in\mathbb{N}$
		\begin{equation*}
			W_{\theta}^n(B)=f_0(B)\cup\bigcup_{\textbf{i}\in \bigcup_{p=1}^{n-1}\mathbb{N}_N^p}f_{\textbf{i}}(f_0(B))\cup f^n(B).
		\end{equation*}
		In the above expression taking limit as $n\rightarrow\infty$, with respect to the Hausdorff metric, we have
		$$F_C=f_0(B)\cup\bigcup_{\textbf{i}\in \mathbb{N}_N^*}f_{\textbf{i}}(f_0(B))\cup F_{\emptyset}=C\cup\bigcup_{\textbf{i}\in \mathbb{N}_N^*}f_{\textbf{i}}(C)\cup F_{\emptyset}=O\cup F_{\emptyset}.$$
	\end{proof}
	\subsection{Fractal function} Let $I=[a,b]$ and $\Delta$ : $a=x_0<x_1<\cdots<x_N=b$ be a partition of $I$. Let $\{(x_i,y_i)\in\mathbb{R}^2:i=0,1,2,\ldots,N\}$ be the given interpolation points in $\mathbb{R}^2$. Set $I_n=[x_{n-1},x_n]$ for $n=1,2,\ldots,N$. Suppose $L_n:I\rightarrow I_n$ are contraction homeomorphisms such that 
	\begin{align}
		\label{lmaps}
		L_n(x_0)&=x_{n-1},~L_n(x_N)=x_n,\\ 
		|L_n(x)-L_n(x')|&\leq c_n|x-x'|\quad\mbox{for all}~ x,x'\in I,~\mbox{for some}~0\leq c_n<1.
	\end{align}
	Further, assume that $F_n:I\times\mathbb{R}\rightarrow\mathbb{R}$ are continuous maps satisfying
	\begin{align}
		F_n(x_0,y_0)=y_{n-1},~F_n(x_N,y_N)=y_n,
		\label{fnmaps}
	\end{align} 
	$$|F_n(x,y)-F_n(x',y)|\leq a_n|x-x'|\quad\mbox{for all}~ x,x'\in I,$$
	$$|F_n(x,y)-F_n(x,y')|\leq b_n|y-y'|\quad\mbox{for all}~ y,y'\in\mathbb{R},$$
	for some $a_n,b_n\in(-1,1)$. Define the functions $W_n:I\times\mathbb{R}\rightarrow I_n\times\mathbb{R}$ by 
	$$W_n(x,y)=(L_n(x),F_n(x,y)).$$ The maps $W_n$ satisfies the join up condition 
	$$W_n(x_0,y_0)=(x_{n-1},y_{n-1}),\quad W_n(x_N,y_N)=(x_n,y_n)\quad\mbox{for all}~\myseries{1}{n}{N}.$$
	The IFS $\{(I\times\mathbb{R};W_n):n=1,2,\ldots,N\}$ may or may not be hyperbolic with respect to Euclidean metric on $\mathbb{R}^2$. Now, we consider a metric $d_{\theta}$ on $\mathbb{R}^2$ which is equivalent to the Euclidean metric on $\mathbb{R}^2$, which is defined as follows:
	$$d_{\theta}((x,y),(x',y'))=\lvert x-x'\rvert+\theta\lvert y-y'\rvert,$$ where $\theta$ is a positive real number. Then we get the following lemma.
	\begin{lm}[see \cite{FE}]
		If $\theta=\frac{\min\{(1-|c_n|):~n=1,2,\ldots,N\}}{\max\{|a_n|:~n=1,2,\ldots,N\}}$, $a=\max\{|c_n|+\theta|a_n|:~n=1,2,\ldots,N\}$ and $b=\max\{|b_n|:~n=1,2,\ldots,N\}$, then the IFS $\{(I\times\mathbb{R};W_n):~n=1,2,\ldots,N\}$ defined as above, associated with data set $\{(x_i,y_i)\in\mathbb{R}^2:~i=0,1,2,\ldots,N\}$ is hyperbolic with respect to the metric $d_{\theta}$ with contractivity factor $\lambda=\max\{a,b\}$. \label{contlm}	
	\end{lm}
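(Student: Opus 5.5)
The plan is to check directly that each $W_n$ is a contraction of $(I\times\mathbb{R},d_\theta)$ with factor $\lambda$. Before that I would record the setting. The metric $d_\theta$ satisfies $\min\{1,\theta\}\,(|x-x'|+|y-y'|)\le d_\theta\le \max\{1,\theta\}\,(|x-x'|+|y-y'|)$, so it is equivalent to the Euclidean metric. Since $I\times\mathbb{R}$ is closed in $\mathbb{R}^2$, the space $(I\times\mathbb{R},d_\theta)$ is complete. Hence hyperbolicity in the sense of the earlier Definition reduces to each $W_n$ having contractivity factor $\lambda<1$.

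\emph{Main estimate.} Fix $n$ and $(x,y),(x',y')\in I\times\mathbb{R}$. By definition,
\begin{align*}
d_\theta(W_n(x,y),W_n(x',y'))&=|L_n(x)-L_n(x')|+\theta|F_n(x,y)-F_n(x',y')|\\
&\le |c_n||x-x'|+\theta\bigl(|F_n(x,y)-F_n(x',y)|+|F_n(x',y)-F_n(x',y')|\bigr)\\
&\le (|c_n|+\theta|a_n|)\,|x-x'|+|b_n|\,\theta|y-y'|\\
&\le a\,|x-x'|+b\,\theta|y-y'|\le \lambda\, d_\theta((x,y),(x',y')).
\end{align*}
This uses the Lipschitz condition (\ref{lmaps}) on $L_n$ and the two Lipschitz conditions on $F_n$, with the triangle inequality inserting the intermediate point $(x',y)$.

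\emph{Showing $\lambda<1$ (the main obstacle).} We have $b<1$ because each $b_n\in(-1,1)$. For $a$, the choice of $\theta$ gives
\[
\theta|a_n|\le \frac{\min_k(1-|c_k|)}{\max_k|a_k|}\,|a_n|\le 1-|c_n|,
\]
so $|c_n|+\theta|a_n|\le 1$.

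This gives only the non-strict inequality. Equality occurs exactly when $|a_n|=\max_k|a_k|$ and $1-|c_n|=\min_k(1-|c_k|)$ hold for the same $n$. Two further points need care:
\begin{itemize}
\item If $\max_k|a_k|=0$, then $\theta$ is undefined. In that case any $\theta>0$ works and $a=\max_n|c_n|<1$.
\item In the equality case, the stated $\theta$ only yields $a\le1$.
\end{itemize}
To get the strict bound I would take $\theta$ strictly smaller, for instance half the stated value. Then $\theta|a_n|\le\tfrac12(1-|c_n|)$, so $a\le\max_n\tfrac{1+|c_n|}{2}<1$.

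The estimate above, the equivalence of metrics and completeness are unaffected by shrinking $\theta$. With this adjustment, $\lambda=\max\{a,b\}<1$, and the IFS is hyperbolic with respect to $d_\theta$. I would note explicitly that the lemma as stated holds verbatim whenever the strict inequality $\theta|a_n|<1-|c_n|$ holds for all $n$. Otherwise the proof requires the slight reduction of $\theta$, and I expect this bookkeeping to be the only delicate point.
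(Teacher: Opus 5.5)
Your main estimate matches the paper's. The paper writes the argument out only for its product analogue and says it mirrors this lemma: pass through $(x',y)$, apply the three Lipschitz bounds, and dominate by $a|x-x'|+b\,\theta|y-y'|\le\lambda\,d_\theta$.

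The difference is at the step $a<1$. The paper asserts $\theta<\frac{1-|c_n|}{|a_n|}$ ``from the given condition'', but the stated $\theta$ only gives $\le$. Your worry therefore points to a real defect in the statement, not just bookkeeping. For example, take $I=[0,1]$ with data $(0,0),(\frac{1}{2},\frac{1}{2}),(1,1)$, and set $L_1(x)=\frac{x}{2}$, $L_2(x)=\frac{x}{2}+\frac{1}{2}$, $F_1(x,y)=\frac{x}{2}$, $F_2(x,y)=\frac{x}{2}+\frac{1}{2}$. Then $c_n=a_n=\frac{1}{2}$ and $b_n=0$, so $\theta=1$ and $\lambda=a=1$. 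Moreover $d_1(W_1(x,y),W_1(x',y))=|x-x'|=d_1((x,y),(x',y))$, so $W_1$ is not a contraction for $d_\theta$.

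Your repair is exactly what is needed: halve $\theta$ (equivalently, put $2\max_n|a_n|$ in the denominator), and allow any $\theta>0$ when all $a_n=0$. With that change your proof is complete.
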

	
	The following is a fundamental result in the theory of fractal interpolation functions:
	\begin{thm}[Barnsley \cite{Ftmapp}]
		Let $C[I]$, the space of all real-valued continuous functions on $I$, be endowed with supremum norm. That is $\norm{f}{\infty}=\max\left\{|f(x)|:~x\in I\right\}$. Consider the closed subspace 
		$$C_{y_0,y_N}[I]:=\left\{f\in C[I]:f(x_0)=y_0,~f(x_N)=y_N\right\}.$$
		Then the following holds.
		\begin{enumerate}
			\item The IFS $\{(I\times\mathbb{R};W_n):n=1,2,\ldots,N\}$ has unique attractor $G(g)$ which is the graph of a continuous function $g:I\rightarrow \mathbb{R}$ satisfying $g(x_i)=y_i$ for all $\myseries{0}{i}{N}$.
			\item The function $g$ is the fixed point of the Read-Bajraktarevic (RB) operator $T:C_{y_0,y_N}[I]\rightarrow C_{y_0,y_N}[I]$ defined by 
			$$(Tf)(x)=F_n(L_n^{-1}(x), f(L_n^{-1}(x))),\quad\mbox{for}~x\in I_n;~\myseries{1}{n}{N}.$$
		\end{enumerate}
		\label{ftoff}	
	\end{thm}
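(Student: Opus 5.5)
The strategy is the standard Read--Bajraktarevic fixed point argument. First I show that $C_{y_0,y_N}[I]$ with the supremum norm is a complete metric space. It is a closed subset of the Banach space $C[I]$, because evaluation at $x_0$ and at $x_N$ is continuous with respect to uniform convergence.

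Next I check that $T$ maps $C_{y_0,y_N}[I]$ into itself. On each $I_n$ the function $Tf$ is continuous, since $L_n^{-1}$, $f$ and $F_n$ are continuous. The only issue is whether the pieces agree at the interior nodes $x_n$, $1\le n\le N-1$. The point $x_n$ lies in both $I_n$ and $I_{n+1}$. By (\ref{lmaps}) we have $L_n^{-1}(x_n)=x_N$ and $L_{n+1}^{-1}(x_n)=x_0$. Since $f(x_0)=y_0$ and $f(x_N)=y_N$, condition (\ref{fnmaps}) gives
\[
F_n(x_N,f(x_N))=F_n(x_N,y_N)=y_n=F_{n+1}(x_0,y_0)=F_{n+1}(x_0,f(x_0)).
\]
So $Tf$ is well defined and continuous on $I$. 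The same computation gives $(Tf)(x_0)=F_1(x_0,y_0)=y_0$ and $(Tf)(x_N)=F_N(x_N,y_N)=y_N$.

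Then I show $T$ is a contraction. For $x\in I_n$, put $u=L_n^{-1}(x)$. The Lipschitz condition of $F_n$ in the second variable gives
\[
|(Tf)(x)-(Tg)(x)|\le |b_n|\,|f(u)-g(u)|\le b\norm{f-g}{\infty},
\]
with $b=\max_n|b_n|<1$. By Theorem~\ref{fpt}, $T$ has a unique fixed point $g\in C_{y_0,y_N}[I]$. This proves part (2).

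For part (1), the fixed point equation $g=Tg$ says exactly that $g(L_n(x))=F_n(x,g(x))$ for $x\in I$. Hence $W_n(G(g))=\{(L_n(x),F_n(x,g(x))):x\in I\}=\{(t,g(t)):t\in I_n\}$. Taking the union over $n$ gives $\bigcup_n W_n(G(g))=G(g)$. The graph $G(g)$ is a nonempty compact set because $g$ is continuous on a compact interval.

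By Lemma~\ref{contlm}, the IFS is hyperbolic with respect to $d_\theta$, and $d_\theta$ is equivalent to the Euclidean metric. So by Theorem~\ref{conth} and Theorem~\ref{fpt}, the Hutchinson operator has a unique invariant set in $\mathscr{H}(I\times\mathbb{R})$, and it must be $G(g)$.

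For the interpolation property, iterate $g(L_n(x))=F_n(x,g(x))$ at the endpoints. This gives $g(x_{n-1})=F_n(x_0,y_0)=y_{n-1}$ and $g(x_n)=F_n(x_N,y_N)=y_n$, so $g(x_i)=y_i$ for all $0\le i\le N$.

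The main obstacle is the well-definedness and continuity of $Tf$ at the interior nodes, together with the uniqueness of the attractor. The first is handled by the join-up conditions as shown above. The second relies on Lemma~\ref{contlm}, since Euclidean hyperbolicity may fail.
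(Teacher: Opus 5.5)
Your proof is correct and is the standard Read--Bajraktarevic argument. The paper does not prove this theorem itself (it is quoted from Barnsley), but its proof of the product analogue, Theorem~\ref{graphofcpf}, follows exactly your steps: consistency of $Tf$ at the shared nodes via the join-up conditions, contraction with factor $b=\max_n|b_n|$, Banach's theorem, and invariance of the graph together with uniqueness of the attractor.

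One caution about the input you borrow. With $\theta$ exactly as printed in Lemma~\ref{contlm} one only gets $|c_n|+\theta|a_n|\le 1$, and equality can occur (e.g.\ all $|c_n|$ equal and all $|a_n|$ equal). Taking half of that $\theta$ gives $|c_n|+\theta|a_n|\le (1+|c_n|)/2<1$, so the hyperbolicity in $d_\theta$ that your uniqueness step uses still holds.
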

	\begin{dfn}
		The function $g$ is called the fractal interpolation function (FIF) corresponding to the data set $\{(x_i,y_i)\in\mathbb{R}^2:i=0,1,2,\ldots,N\}$.
	\end{dfn}

	\subsection{Hausdorff, packing and box dimensions}
	\begin{itemize}
		\item\textbf{Hausdorff dimension:} Let $(Y,d)$ be a metric space. Then for a non-empty subset $U$ of $Y$,  define the diameter $d(U)$ of $U$ by
		$$d(U)=\sup\{d(x,y): x,y \in U \}.$$
		For $F\subseteq Y$ and $\delta>0$, a countable cover $\{U_i:i\in \mathbb{N}\}$ of $F$ is called a $\delta$-cover of $F$ with respect to the metric $d$ if $d(U_i)\leq \delta$ for all $i\in \mathbb{N}$.
		Let $s\geq 0$. For any $\delta>0$,  define
		$$H_\delta^s(F)=\inf \left \{ \sum_{i=1}^{\infty}d(U_i)^s: \{U_i\}~ \mbox{be a}    ~\delta -\mbox{cover of F} \right\},$$
		then the s-dimensional Hausdorff measure of $F$ is given by $$H^s(F)=\lim\limits_{\delta\rightarrow 0}H_\delta^s(F)$$ and the Hausdorff dimension of $F$ \cite{FG,FE}  is given by
		\begin{align*}
			\dim_HF=&\inf\left \{s\geq 0: H^s(F)=0\right\}\\
			=&\sup\left\{s : H^s(F)=\infty\right\}.
		\end{align*}
		\item\textbf{Packing dimension:} Let $$P_\delta^s(F)=\sup \left \{ \sum_{i=1}^{\infty}d(U_i)^s\right\},$$ where supremum is taking over all the collection $\{U_i\}$ of disjoint balls of radii at most $\delta$, with centres in $F$. Let $$P_0^s(F)=\lim\limits_{\delta\rightarrow 0}P_{\delta}^s(F).$$ Then s-dimensional packing measure of $F$ is given by $$P^s(F)=\inf\left\{\sum_{i}P_0^s(F_i):F\subseteq\bigcup_{i=1}^{\infty}(F_i)\right\}$$ and the packing dimension of $F$ \cite{FG}  is given by
		\begin{align*}
			\dim_PF=&\inf\left \{s\geq 0: P^s(F)=0\right\}\\
			=&\sup\left\{s : P^s(F)=\infty\right\}.
		\end{align*}
		\item\textbf{Box dimension:} Let $(Y,d)$ be a metric space and let $F$ be a compact subset of $Y$ and $N_r(F)$ be the smallest number of  balls of radius $r$ that covers $F$. Then the upper and lower box dimension of $F$ is defined by
		\begin{align*}
			\overline{\dim}_B(F)=&\limsup\limits_{r\rightarrow 0}\frac{\log N_r(F)}{-\log r}\\ \underline{\dim}_B(F)=&\liminf\limits_{r\rightarrow 0}\frac{\log N_r(F)}{-\log r}.
		\end{align*}
		If both the dimensions are equal, then the common value is called the box dimension of $F$ \cite{FG,FE} and is denoted by $\dim_BF$.
	\end{itemize}
	\subsection{Dimension of the product set}\label{exrofdofps}
	In this section  some existing results on the dimensions of the product set is given. This results are used in the sequel to  prove the main results.
	\begin{thm} (see \cite{FG})
		For any sets $E\subseteq \mathbb{R}^p$ and $F\subseteq\mathbb{R}^q$
		$$\dim_HE+\dim_HF\leq\dim_H(E\times F)\leq\overline{\dim}_B(E\times F)\leq\overline{\dim}_BE+\overline{\dim}_BF.$$
		\label{rotpset}
	\end{thm}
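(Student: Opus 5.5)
Theorem~\ref{rotpset} is a chain of three inequalities, and I would prove each one separately. The middle inequality $\dim_H(E\times F)\leq\overline{\dim}_B(E\times F)$ is the general fact $\dim_H G\leq\overline{\dim}_B G$ for any bounded $G$. Take $s>\overline{\dim}_B G$. For all small $r$ we have $N_r(G)\leq r^{-s}$. The $N_r(G)$ balls of radius $r$ form a $2r$-cover, so
$$H^s_{2r}(G)\leq N_r(G)(2r)^s\leq 2^s.$$
Hence $H^s(G)<\infty$, which gives $\dim_H G\leq s$. (If $E$ or $F$ is unbounded, the box dimensions are infinite and the upper bounds are trivial.)

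For the right-hand inequality $\overline{\dim}_B(E\times F)\leq\overline{\dim}_BE+\overline{\dim}_BF$, I would use products of covers. Cover $E$ by $N_r(E)$ balls of radius $r$ and $F$ by $N_r(F)$ balls of radius $r$. Each product of two such balls lies in a ball of radius $\sqrt2 r$ in $\mathbb{R}^{p+q}$, so $N_{\sqrt2 r}(E\times F)\leq N_r(E)N_r(F)$. Take logarithms and divide by $-\log r$; the harmless factor $\sqrt2$ does not change the limit. Since the $\limsup$ of a sum is at most the sum of the $\limsup$s, the inequality follows.

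The left-hand inequality $\dim_HE+\dim_HF\leq\dim_H(E\times F)$ is the main obstacle, and I would prove it with Frostman's lemma. Take $s<\dim_HE$ and $t<\dim_HF$. First reduce to compact sets: for Borel sets, compact subsets of positive measure exist with dimension arbitrarily close to the full dimension. For arbitrary sets, the general statement needs the Howroyd/Marstrand weighted-measure version of Frostman's lemma, and I would cite \cite{PD,tdocpset} for that case. On the compact sets, Frostman's lemma gives nonzero finite measures $\mu$ on $E$ and $\nu$ on $F$ satisfying $\mu(B(x,\rho))\leq c\rho^s$ and $\nu(B(y,\rho))\leq c\rho^t$. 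Then the product measure $\mu\times\nu$ lives on $E\times F$. Any set $U$ of diameter $\rho$ lies in a product of balls of radius $\rho$, so $(\mu\times\nu)(U)\leq c^2\rho^{s+t}$. This is exactly the hypothesis of the mass distribution principle, which gives $H^{s+t}(E\times F)\geq(\mu\times\nu)(E\times F)/c^2>0$. Hence $\dim_H(E\times F)\geq s+t$, and letting $s\uparrow\dim_HE$ and $t\uparrow\dim_HF$ finishes the proof. The delicate step is the existence of the Frostman measures, especially for non-Borel sets, so there I would rely on the cited literature rather than reprove it.
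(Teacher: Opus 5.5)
Your argument is correct and is the standard one behind the citation; the paper gives no proof of its own, only the reference to Falconer. Covering arguments give $\dim_H\le\overline{\dim}_B$ and the product bound for upper box dimension, and a product measure fed into the mass distribution principle gives the lower bound, where you use Frostman measures in place of Falconer's product of restricted Hausdorff measures $H^s|_{E_1}\times H^t|_{F_1}$ with upper density bounds. You are also right that the reduction to compact subsets only works for Borel sets, which is why the textbook lower bound is stated for Borel sets; the phrase ``for any sets'' genuinely needs Marstrand/Howroyd, i.e.\ the first inequality of Theorem~\ref{rbarptset}, and citing it is legitimate.
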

	\begin{thm} (see \cite{PD})
		If $X$ and $Y$ are two arbitrary metric spaces, then $$\dim_HX+\dim_HY\leq\dim_H(X\times Y)\leq\dim_HX+\dim_PY\leq\dim_P(X\times Y)\leq\dim_PX+\dim_PY$$
		\label{rbarptset}
	\end{thm}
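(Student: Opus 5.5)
This is Howroyd's theorem for arbitrary metric spaces, with $X\times Y$ carrying the product metric $d_0$. Each inequality can be proved separately, and I would do them in the order: outer bounds, middle link, then the lower bound. The product metric is comparable to the maximum metric $\max\{d_X,d_Y\}$ within a factor of $\sqrt2$. Hausdorff and packing dimensions are unchanged under bi-Lipschitz changes of metric, so I would work with the max metric throughout. With that choice, balls in $X\times Y$ are exactly products of balls.

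\textbf{Upper inequalities.} For $\dim_H(X\times Y)\le \dim_HX+\dim_PY$, I would first use the standard characterisation
$$\dim_PY=\inf\Bigl\{\sup_i\overline{\dim}_B Y_i : Y\subseteq\bigcup_i Y_i\Bigr\}.$$
By countable stability of $\dim_H$, it then suffices to show $\dim_H(X\times Y_i)\le \dim_HX+\overline{\dim}_BY_i$ for each $i$.

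Fix $s>\dim_HX$ and $t>\overline{\dim}_BY_i$. Take a $\delta$-cover $\{U_j\}$ of $X$ with $\sum d(U_j)^s$ small. For each $j$, cover $Y_i$ by at most $d(U_j)^{-t}$ sets of diameter $d(U_j)$. This is possible for all small $\delta$ by the definition of upper box dimension. The resulting product sets cover $X\times Y_i$, have diameter comparable to $d(U_j)$, and give
$$\sum_j d(U_j)^{-t}d(U_j)^{s+t}=\sum_j d(U_j)^s,$$
which is small. Hence $H^{s+t}(X\times Y_i)=0$.

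The inequality $\dim_P(X\times Y)\le\dim_PX+\dim_PY$ follows the same pattern. Decompose $X=\bigcup X_i$ and $Y=\bigcup Y_k$, and use $\overline{\dim}_B(X_i\times Y_k)\le\overline{\dim}_BX_i+\overline{\dim}_BY_k$. This holds because covering numbers multiply under the max metric, exactly as in the proof of Theorem~\ref{rotpset}.

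\textbf{Middle inequality.} To prove $\dim_HX+\dim_PY\le\dim_P(X\times Y)$, I would use Frostman-type local density characterisations. For $s<\dim_HX$ and $t<\dim_PY$, choose a compact $E\subseteq X$ carrying a measure $\mu$ with $\mu(B(x,r))\le r^s$. Also choose a compact $F\subseteq Y$ and a measure $\nu$ with $\liminf_{r\to0}\nu(B(y,r))/r^t=0$ for $\nu$-a.e.\ $y$; equivalently, the upper local dimension of $\nu$ is at least $t$. Then
$$(\mu\times\nu)\bigl(B(x,r)\times B(y,r)\bigr)\le r^s\,\nu(B(y,r)),$$
so the upper local dimension of $\mu\times\nu$ is at least $s+t$ almost everywhere. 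The packing density theorem then gives $\dim_P(X\times Y)\ge s+t$.

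\textbf{Lower inequality.} For $\dim_HX+\dim_HY\le\dim_H(X\times Y)$, I would take Frostman measures $\mu$ on $X$ and $\nu$ on $Y$ with $\mu(B(x,r))\lesssim r^s$ and $\nu(B(y,r))\lesssim r^t$. Then $\mu\times\nu$ satisfies the $(s+t)$ mass distribution principle on product balls, which gives the bound.

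\textbf{Main obstacle.} The difficulty is the existence of Frostman measures on arbitrary, possibly non-separable, metric spaces, since the Euclidean proof uses dyadic cubes. This is precisely Howroyd's contribution: he proves a weighted-Hausdorff-measure version of Frostman's lemma for general metric spaces. I would try to invoke that, or reduce to compact or separable subsets of positive measure. If that fails, I would fall back on weighted Hausdorff measures $\lambda^s$, using $\lambda^s\asymp H^s$ directly in the covering argument.
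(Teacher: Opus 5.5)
The paper gives no proof of this statement; it is quoted from Howroyd, so your outline has to stand on its own. Your two upper inequalities are fine. In the covering argument, replace $d(U_j)$ by $\max\{d(U_j),\eta_j\}$ with $\sum_j\eta_j^s$ small, so that sets of diameter zero are handled. Reducing to separable spaces is also harmless: a non-separable space contains an uncountable $\epsilon$-separated set, so it admits no countable $\delta$-cover for $\delta<\epsilon$ and has infinite Hausdorff and packing dimension.

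\textbf{The gap is in the two lower inequalities.} Both rest on ``choose a compact $E\subseteq X$ carrying $\mu$'' and ``choose a compact $F\subseteq Y$ and $\nu$ with upper local dimension at least $t$''. In arbitrary metric spaces such compact sets need not exist. A Bernstein set $B\subseteq[0,1]$ has full outer Lebesgue measure, so $\dim_HB=\dim_PB=1$. Yet $B$ contains no uncountable closed set, so all its compact subsets are countable, and for $X=Y=B$ your construction yields nothing. Your proposed remedies do not close this for the middle inequality. Howroyd's Frostman-type results concern (weighted) Hausdorff measures and produce nothing like your $\nu$. The compact-subset theorems for packing measure need analytic sets, and reduction to compact subsets is exactly what fails here. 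So $\dim_HX+\dim_PY\le\dim_P(X\times Y)$ is not proved as it stands. The bound $\dim_HX+\dim_HY\le\dim_H(X\times Y)$ is proved only modulo a Frostman lemma for non-analytic spaces that you have not supplied.

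\textbf{The missing idea is that neither inequality needs a measure on $Y$.}

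\emph{Middle inequality.} Work in the max metric and fix $s<\dim_HX$ and $t<\dim_PY$, so that $H^s_\infty(X)>0$. Let $X\times Y\subseteq\bigcup_iE_i$ and write $E_i^y=\{x:(x,y)\in E_i\}$. Countable subadditivity of $H^s_\infty$ shows that the sets $Y_{i,k}=\{y\in Y:H^s_\infty(E_i^y)\ge 1/k\}$ cover $Y$. Hence some $Y_{i,k}$ has $\overline{\dim}_BY_{i,k}>t$. Take $y_1,\dots,y_M\in Y_{i,k}$ maximal pairwise more than $r$ apart. A set of diameter at most $r$ meets at most one slice $X\times\{y_j\}$. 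Each slice of $E_i$ needs at least $k^{-1}r^{-s}$ such sets, because $H^s_\infty(E_i^{y_j})\ge1/k$. So $E_i$ needs at least $k^{-1}r^{-s}M$ of them, which gives $\overline{\dim}_BE_i\ge s+t$. Since the cover was arbitrary, $\overline{\dim}_{MB}(X\times Y)\ge s+t$, which is the characterisation of $\dim_P$ you already use.

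\emph{Lower inequality.} Your weighted-measure fallback is the right route. Suppose $\sum_ic_i\mathbf{1}_{U_i}\ge\mathbf{1}_{X\times Y}$ with $d(U_i)\le\delta$.
\begin{itemize}
\item For each $y\in Y$, the weights $c_i\mathbf{1}_{\pi_YU_i}(y)$ on the sets $\pi_XU_i$ form a weighted $\delta$-cover of $X$. Hence $\sum_ic_id(U_i)^s\mathbf{1}_{\pi_YU_i}(y)\ge\lambda^s_\delta(X)$.
\item Therefore the weights $c_id(U_i)^s/\lambda^s_\delta(X)$ on the sets $\pi_YU_i$ form a weighted $\delta$-cover of $Y$.
\item This gives $\lambda^{s+t}_\delta(X\times Y)\ge\lambda^s_\delta(X)\lambda^t_\delta(Y)$.
\end{itemize}
The only deep input is then Howroyd's comparability $H^u\le C\lambda^u$ in arbitrary metric spaces, which is precisely what the paper's citation supplies.
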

	\begin{dfn}
		Let $S\subseteq \mathbb{R}^p$ and $r=(r_1,r_2,\ldots,r_p)\in \mathbb{R}^p$ with $r_i>0$. Define $$rS=\{(r_1x_1,r_2x_2,\ldots,r_px_p):x\in S\}.$$ The set $S$ is called
		self-affine, if $S$ is the union of N distinct subsets, each identical with $rS$ up to translation and rotation. If $r_1 = r_2 =\cdots= r_p$,
		then a self-affine set is called self-similar.
		\label{dfssset}
	\end{dfn}
	\begin{thm} (see \cite{FDIPS})
		If $E\subseteq \mathbb{R}^p$ and $F\subseteq\mathbb{R}^q$ are the self-similar sets, then $\dim_H(E\times F)=\dim_B(E\times F)=\dim_HE+\dim_HF=\dim_BE+\dim_BF.$
		\label{sslm}
	\end{thm}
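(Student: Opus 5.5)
This is a cited result (\cite{FDIPS}), so the plan is a self-contained argument. It combines Theorem~\ref{rotpset} with the standard fact that a self-similar set has equal Hausdorff and box dimension.

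\textbf{Step 1: each factor has equal Hausdorff and box dimension.} For a self-similar set $S\subseteq\mathbb{R}^p$ (attractor of an IFS of similarities), I would show $\dim_HS=\underline{\dim}_BS=\overline{\dim}_BS$. The route is Falconer's implicit theorem. Every small ball $B(x,r)$ with $x\in S$ contains a similar copy $f_{\textbf{i}}(S)$ of $S$ of diameter comparable to $r$: choose the word $\textbf{i}$ with $r_{\textbf{i}}\approx r$. Conversely, any piece $S\cap B(x,r)$ can be mapped onto a large portion of $S$ by $f_{\textbf{i}}^{-1}$, which is a similarity with ratio comparable to $r^{-1}$. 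These two scaling properties give $\dim_HS=\overline{\dim}_BS$, and hence $\dim_HS=\dim_BS$. Applying this to $E$ and to $F$ gives $\dim_HE=\dim_BE$ and $\dim_HF=\dim_BF$. This step is the main obstacle: proving the "conversely" scaling estimate without any separation condition requires the full implicit-theorem machinery. If I could assume the open set condition, I would instead take the similarity dimension $s$ from Moran's equation $\sum r_i^s=1$ and show $0<H^s(S)<\infty$ together with $N_r(S)\asymp r^{-s}$.

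\textbf{Step 2: sandwich the product.} By Theorem~\ref{rotpset},
\[
\dim_HE+\dim_HF\leq\dim_H(E\times F)\leq\overline{\dim}_B(E\times F)\leq\overline{\dim}_BE+\overline{\dim}_BF .
\]
By Step 1 the two outer quantities are equal, so every inequality is an equality.

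\textbf{Step 3: the lower box dimension.} We also have $\dim_H(E\times F)\leq\underline{\dim}_B(E\times F)\leq\overline{\dim}_B(E\times F)$. The first inequality holds because covers by balls of radius $r$ are admissible $2r$-covers, which bounds the Hausdorff measure by the box counts. Together with Step 2 this squeezes $\underline{\dim}_B(E\times F)$ to the common value. Therefore $\dim_B(E\times F)$ exists and equals $\dim_H(E\times F)=\dim_HE+\dim_HF=\dim_BE+\dim_BF$, as required.
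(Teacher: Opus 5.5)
Your Steps 2 and 3 are correct. Once each factor satisfies $\dim_H=\overline{\dim}_B$, the chain in Theorem~\ref{rotpset} together with $\dim_H\le\underline{\dim}_B\le\overline{\dim}_B$ forces every quantity in the statement to coincide. The paper gives no argument here, only the citation \cite{FDIPS}, so this reduction is a sensible self-contained route. The gap is in Step 1.

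Your second (``conversely'') scaling property is not just hard to prove without separation; it is false in general. The set $S\cap B(x,r)$ typically meets cylinders $f_{\textbf{j}}(S)$ other than $f_{\textbf{i}}(S)$, and $f_{\textbf{i}}^{-1}$ sends those points off $S$. So you do not get a map $g:S\cap B(x,r)\to S$ with $|g(y)-g(z)|\ge a r^{-1}|y-z|$. No cleverer choice of $g$ rescues this. That hypothesis is the half of Falconer's implicit theorem which yields $H^s(S)\ge a^s>0$ for $s=\dim_H S$. But there are overlapping self-similar sets with $H^{\dim_H S}(S)=0$: almost every projection of the one-dimensional Sierpinski gasket onto a line is self-similar, has dimension $1$, and has length zero. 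So the plan ``these two properties give $\dim_HS=\overline{\dim}_BS$'' cannot be carried out as written. The OSC fallback is not available either, since the statement assumes no separation.

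You only need the first property, which holds for every self-similar set. It is the other half of the implicit theorem, and it can be proved directly.

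Let $D=\operatorname{diam}S>0$ and $r_{\min}=\min_i r_i$. For small $r>0$, take a maximal set $x_1,\dots,x_N\in S$ with pairwise distances $>2r$. Then the balls $B(x_j,2r)$ cover $S$, so $N\ge N_{2r}(S)$, while the closed balls $B(x_j,r)$ are pairwise disjoint.

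Let $\textbf{i}_j$ be the shortest initial segment of an address of $x_j$ with $r_{\textbf{i}_j}\le r/D$. Then $r_{\textbf{i}_j}>r_{\min}r/D$, and $f_{\textbf{i}_j}(S)\subseteq B(x_j,r)$.

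The IFS $\{f_{\textbf{i}_j}\}_{j=1}^N$ has an attractor $K\subseteq S$ satisfying the strong separation condition. Hence $\dim_HK=t$, where $\sum_j r_{\textbf{i}_j}^t=1$. This gives $N(r_{\min}r/D)^t\le 1$, that is, $t\ge \log N/\log\bigl(D/(r_{\min}r)\bigr)$. Therefore $\dim_HS\ge \log N_{2r}(S)/\log\bigl(D/(r_{\min}r)\bigr)$ for all small $r$. Taking $\limsup_{r\to0}$ gives $\dim_HS\ge\overline{\dim}_BS$, with no separation assumption.

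With this in place of your Step 1, the rest of your proof goes through unchanged.
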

	\begin{thm}(see \cite{Inssm,Ihomossbd})
		\label{doinhomoprsets}
		Let $F_C$  be the  inhomogeneous attractor of an IFS $\{(Y;f_i):i=0,1,2,\ldots,N\}$ with the condensation set $C$ and $F_{\emptyset}$ be the corresponding homogeneous attractor of it.
		\begin{enumerate}
			\item The Hausdorff and packing dimensions of the set $F_C$ is given by
			\begin{align*}
				\dim_HF_C=&\max\{\dim_HF_{\emptyset},\dim_HC\}\\
				\dim_PF_C=&\max\{\dim_PF_{\emptyset},\dim_PC\}.
			\end{align*}
			\item If the sets $f_0(F_C),f_1(F_C),\ldots,f_N(F_C)$ are all pairwise disjoint, then the upper box dimension of the set $F_C$ is  given by
			$$\overline{\dim}_B(F_C)=\max\{\overline{\dim}_B(F_{\emptyset}),\overline{\dim}_B(C)\}.$$
		\end{enumerate}
	\end{thm}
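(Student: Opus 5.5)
We would prove Theorem~\ref{doinhomoprsets} by combining Lemma~\ref{Rbhia} with the basic stability and Lipschitz properties of the dimensions. By Lemma~\ref{Rbhia}, $F_C=O\cup F_{\emptyset}=C\cup\bigcup_{\textbf{i}\in\mathbb{N}_N^*}f_{\textbf{i}}(C)\cup F_{\emptyset}$. This is a countable union of sets: $C$, the countably many images $f_{\textbf{i}}(C)$, and $F_{\emptyset}$.

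For part (1) we use two facts. First, Hausdorff and packing dimension are countably stable: $\dim_H\bigcup_j E_j=\sup_j\dim_H E_j$, and likewise for $\dim_P$, because $P^s$ is defined through countable covers. Second, neither dimension increases under Lipschitz maps, and every contraction $f_{\textbf{i}}$ is Lipschitz. Together these give $\dim_H F_C\le\max\{\dim_H F_{\emptyset},\dim_H C\}$. The reverse inequality follows from monotonicity, since $C\subseteq F_C$ and $F_{\emptyset}\subseteq F_C$. The packing dimension is handled identically, using $P^s(f(E))\le \mathrm{Lip}(f)^sP^s(E)$. The only care needed is that a Lipschitz image of a packing by balls centred in $E$ can be compared with a packing of $E$. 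The cleanest way is to use the equivalent characterisation $\dim_P E=\inf\{\sup_j\overline{\dim}_B E_j : E\subseteq\bigcup_j E_j\}$, together with the fact that upper box dimension does not increase under Lipschitz maps.

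For part (2), upper box dimension is only finitely stable, so the countable union above cannot be used directly. Monotonicity still gives the lower bound $\overline{\dim}_B F_C\ge\max\{\overline{\dim}_B F_{\emptyset},\overline{\dim}_B C\}$. For the upper bound we use the disjointness hypothesis. The sets $f_i(F_C)$ for $0\le i\le N$ are compact and pairwise disjoint, so there is $\eta>0$ separating them. Let $s>\max\{\overline{\dim}_B F_{\emptyset},\overline{\dim}_B C\}$. Fix $r_0$ such that $N_r(C)\le r^{-s}$ and $N_r(F_{\emptyset})\le r^{-s}$ for all $r<r_0$.

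We then estimate $N_r(F_C)$ by iterating the relation $F_C=C\cup\bigcup_{i=1}^N f_i(F_C)$ down to scale $r$. At a cut-off level, the pieces $f_{\textbf{i}}(F_C)$ have diameter comparable to $r$. Each such piece costs one ball. The condensation copies $f_{\textbf{i}}(C)$ at intermediate levels cost $N_{r/r_{\textbf{i}}}(C)\lesssim (r/r_{\textbf{i}})^{-s}$ balls each.

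Summing over the intermediate levels gives $r^{-s}\sum_{\textbf{i}} r_{\textbf{i}}^{s}$. The main obstacle is controlling this sum. Disjointness (an SSC-type separation) should force $\sum_i r_i^{t}\le 1$ at $t=\overline{\dim}_B F_{\emptyset}$, at least when the $f_i$ are bi-Lipschitz with comparable lower ratios. Then for $s$ strictly larger we get $\sum_i r_i^s<1$, so $\sum_{\textbf{i}}r_{\textbf{i}}^s$ converges geometrically and $N_r(F_C)\lesssim r^{-s}$. This would yield $\overline{\dim}_B F_C\le s$, and letting $s$ decrease completes the proof.

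If this route fails for general contractions, the fallback is to defer to \cite{Inssm,Ihomossbd}, where the hypothesis is used in exactly this form.
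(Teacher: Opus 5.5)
Your part (1) and the lower bound in part (2) are fine. The argument for (1), countable stability plus Lipschitz invariance applied to $F_C=O\cup F_{\emptyset}$, is the standard one; the paper gives no proof of this theorem and only cites Snigireva and Fraser. The gap is the upper bound in part (2). Your key step, ``disjointness forces $\sum_i r_i^{t}\le 1$ at $t=\overline{\dim}_B F_{\emptyset}$'', holds for similarities, where strong separation gives $\dim_B F_{\emptyset}=t_r$ with $\sum_i r_i^{t_r}=1$. It fails in general, even for bi-Lipschitz maps, for the following reasons:
\begin{itemize}
\item The $r_i$ are only upper Lipschitz constants.
\item The number of cut-off words with $r_{\mathbf{i}}\approx r$ is of order $r^{-t_r}$.
\item The series $\sum_{\mathbf{i}}r_{\mathbf{i}}^{s}=\sum_n(\sum_i r_i^{s})^n$ diverges for $s\le t_r$, and $t_r$ can exceed $\overline{\dim}_B F_{\emptyset}$.
\end{itemize}
For example, take $x\mapsto x/4$ and $x\mapsto x/4+3/4$, whose attractor $K$ misses $(1/4,3/4)$. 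On $(1/4,3/4)$, replace them by piecewise linear maps with slopes in $[2/9,1/3]$. The attractor is still $K$, of dimension $1/2$, but now $\sum_i r_i^{1/2}=2/\sqrt{3}>1$. So for $s$ between $\overline{\dim}_B F_{\emptyset}$ and $t_r$ your estimate gives nothing.

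The gap also cannot be closed at the generality stated, namely arbitrary contractions on an arbitrary complete $Y$, because the conclusion of (2) fails there. For the same reason, deferring to the references does not help: they treat self-similar sets. Here is a counterexample.
\begin{itemize}
\item Take the $N$-ary tree of finite words ($N\ge 2$) and give each edge from level $k$ to level $k+1$ length $\lambda^{k^2}$, with $0<\lambda<1$.
\item Attach to each vertex $\mathbf{w}$ a pendant edge of length $\lambda^{|\mathbf{w}|}$ ending at a point $c_{\mathbf{w}}$.
\item Let $Y$ consist of the vertices, the points $c_{\mathbf{w}}$ and the ends, with the path metric; $Y$ is compact.
\item Let $f_i$ prepend the letter $i$. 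It rescales tree edges by $\lambda^{2k+1}\le\lambda$ and pendant edges by $\lambda$, so it is a $\lambda$-contraction.
\end{itemize}
With $C=\{c_{\varnothing}\}$, $F_{\emptyset}$ is the set of ends and $F_C=\{c_{\mathbf{w}}\}\cup F_{\emptyset}$. The sets $C,f_1(F_C),\ldots,f_N(F_C)$ are pairwise disjoint, since they are distinguished by the first letter. The $N^k$ level-$k$ cylinders of $F_{\emptyset}$ have diameter at most $2\lambda^{k^2}/(1-\lambda)$, so $\dim_B F_{\emptyset}=0=\dim_B C$. However, the $N^n$ points $c_{\mathbf{w}}$ with $|\mathbf{w}|=n$ are pairwise more than $2\lambda^n$ apart, so $\overline{\dim}_B F_C\ge\log N/\log(1/\lambda)>0$.

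Your sketch does prove the statement once you restrict to similarities, as in the cited works. Then $\overline{\dim}_B F_{\emptyset}=t_r$, so $\sum_i r_i^{s}<1$ for $s>t_r$. The cut-off words number $O(r^{-t_r})$, and each condensation copy costs $N_{r/r_{\mathbf{i}}}(C)\lesssim(r/r_{\mathbf{i}})^{-s}$. Together these give $N_r(F_C)\lesssim r^{-s}$.
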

	
	\section{Homogeneous attractor on the product space and dimension bounds}
	\label{pohm}
	Let $(X_k,d_k)$,  $k=1,2,\ldots,m,$  be complete metric spaces and $(X,d_0)$ be the product metric space defined as in Section~\ref{preli}.  Let $(\mathscr{H}(X_k),H_k)$, $k=1,2,\ldots,m,$ and $(\mathscr{H}(X),H)$ be the Hausdorff metric spaces corresponding to the complete metric spaces $(X_k,d_k)$,  $k=1,2,\ldots,m,$ and $(X,d_0)$ respectively. Then $(\mathscr{H}(X_k),H_k)$, $k=1,2,\ldots,m,$ and $(\mathscr{H}(X),H)$ are also complete metric spaces (see \cite{FE}). Let us define
	\begin{equation}
		\mathscr{H}=\prod_{k=1}^m\mathscr{H}(X_k)=\left\{A_1\times A_2
		\times\cdots\times A_m:A_k\in \mathscr{H}(X_k)~\mbox{for}~k=1,2,\ldots m\right\}.
		\label{p of h}
	\end{equation}
	Since product of compact sets is compact, so all the sets in $\mathscr{H}$ are compact. Also $\mathscr{H}$ can be treat as a subset of $\mathscr{H}(X)$ via the inclusion map. Now we define a new metric $H_0$ on $\mathscr{H}$ by
	
	\begin{equation}
		H_0(A,B)=\left\{\sum_{k=1}^{m}(H_k(A_k,B_k))^2\right\}^{\frac{1}{2}}\quad \mbox{for all}~A=\prod_{k=1}^mA_k,~B=\prod_{k=1}^mB_k\in \mathscr{H}. \label{phm} 	
	\end{equation}
	The metric $H_0$, we are calling as the product Hausdorff metric on $\mathscr{H}$.  If we restrict the Hausdorff metric $H$ on $\mathscr{H}(X)$ to $\mathscr{H}$, then  $H_0$ is equivalent to $H$ (see Theorem \ref{evt}).
	Since each $(\mathscr{H}(X_k),H_k)$ is complete so, $(\mathscr{H},H_0)$ is also complete. The following result provides an equivalency between the product Hausdorff metric $H_0$ and the restriction of the Hausdorff metric $H$ on the space $\mathscr{H}$.
	\begin{thm}
		If $H'$ is the restriction of the Hausdorff metric $H$ on $\mathscr{H}$, that is $H'=H_{|\mathscr{H}\times\mathscr{H}}$, then $H_0$ is equivalent to $H'$. In particular $$\frac{1}{\sqrt{m}}H_0(A,B)\leq H'(A,B)\leq H_0(A,B)\quad \mbox{for all}~A,B\in\mathscr{H}.$$
		\label{evt}
	\end{thm}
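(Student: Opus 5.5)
The plan is to compare the two metrics coordinate by coordinate, working directly from the definition of the Hausdorff distance through $\delta$-neighbourhoods. Fix $A=\prod_{k=1}^mA_k$ and $B=\prod_{k=1}^mB_k$ in $\mathscr{H}$, and write $\delta_k=H_k(A_k,B_k)$ and $\delta=H(A,B)$. Throughout I will use one small fact: since the sets are compact, for every $\epsilon>0$ (or even with $\epsilon=0$, by compactness and continuity of the distance) each point of one set lies within distance $\delta+\epsilon$ of some point of the other. I will carry an arbitrary $\epsilon>0$ and let it go to $0$ at the end, so that the argument does not depend on the infimum being attained.

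\emph{Upper bound $H'(A,B)\leq H_0(A,B)$.} Take $\vec a=(a_1,\ldots,a_m)\in A$. Since $A_k\subset (B_k)_{\delta_k+\epsilon}$, I choose $b_k\in B_k$ with $d_k(a_k,b_k)\leq \delta_k+\epsilon$. Then $\vec b=(b_1,\ldots,b_m)\in B$, because $B$ is a full product. This gives
\[
d_0(\vec a,\vec b)\leq\Big\{\sum_{k=1}^m(\delta_k+\epsilon)^2\Big\}^{1/2},
\]
so $A\subset B_{\eta}$ with $\eta$ equal to the right-hand side. Exchanging the roles of $A$ and $B$ gives $B\subset A_\eta$. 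Hence $H(A,B)\leq\eta$, and letting $\epsilon\to0$ yields $H(A,B)\leq H_0(A,B)$.

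\emph{Lower bound $\frac{1}{\sqrt m}H_0(A,B)\leq H'(A,B)$.} It suffices to show that $\delta_k\leq\delta$ for each $k$, since then $H_0(A,B)^2=\sum_k\delta_k^2\leq m\delta^2$. Fix $k$ and $a_k\in A_k$. Because $A$ is a product of nonempty sets, I can extend $a_k$ to some $\vec a\in A$ by choosing the other coordinates arbitrarily. Then there is $\vec b\in B$ with $d_0(\vec a,\vec b)\leq\delta+\epsilon$, and therefore $d_k(a_k,b_k)\leq d_0(\vec a,\vec b)\leq\delta+\epsilon$, where $b_k=\pi_k(\vec b)\in B_k$. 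This shows $A_k\subset(B_k)_{\delta+\epsilon}$, and symmetrically $B_k\subset (A_k)_{\delta+\epsilon}$. Hence $\delta_k\leq\delta+\epsilon$, and letting $\epsilon\to 0$ gives $\delta_k\leq\delta$.

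The two inequalities together give the stated bounds, and they immediately imply that $H_0$ and $H'$ are equivalent metrics. The only delicate points are the following.
\begin{itemize}
\item The product structure of $A$ and $B$ is genuinely needed in both directions: coordinatewise choices assemble into a point of $B$ for the upper bound, and any $a_k$ extends to a point of $A$ for the lower bound. This is exactly why the statement is restricted to $\mathscr{H}$ rather than all of $\mathscr{H}(X)$.
\item The infimum in the definition of $h$ must be handled correctly; the $\epsilon$-argument above takes care of this.
\end{itemize}
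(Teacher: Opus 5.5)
Your proof is correct and follows essentially the same route as the paper. For the lower bound you extend $a_k$ to a point of $A$ and project a nearby point of $B$, obtaining $H_k(A_k,B_k)\le H'(A,B)$ for each $k$; for the upper bound you assemble coordinatewise near points into a point of $B$, which is possible because $B$ is a full product. The only cosmetic difference is that you work with $\delta+\epsilon$ (valid because the admissible set of $\delta$'s is upward closed), whereas the paper takes an arbitrary admissible $\delta_0\in M$ and then passes to the infimum.
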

	\begin{proof}
		Let $A=\prod_{k=1}^mA_k,~B=\prod_{k=1}^mB_k\in \mathscr{H}$. Define
		\begin{align*}
			M=&\left\{\delta : A\subseteq B_\delta~\mbox{and}~B\subseteq A_\delta\right\}\\
			M_k=&\left\{\delta : A_k\subseteq (B_k)_\delta~\mbox{and}~B_k\subseteq (A_k)_\delta\right\}\quad \mbox{for}~k=1,2,\ldots,m.
		\end{align*}
		Let $\delta_0\in M$ and $a_k\in A_k$ for some $k\in\{1,2,\ldots,m\}.$ Since each $A_k$ is non-empty,  there exists $a_p\in A_p$ for $p=1,2,\ldots,k-1,k+1,\ldots,m$, such that $\vec{a}=(a_1,a_2,\ldots,a_m)\in A\subseteq B_{\delta_0}$. Moreover, there exist $\overline{b}=(b_1,b_2,\ldots,b_m)\in B$ such that $d_0(\vec{a},\vec{b})\leq \delta_0.$ Therefore,
		$$\left\{\sum_{n=1}^{m}(d_n(a_n,b_n))^2\right \}^{\frac{1}{2}}\leq\delta_0.$$
		This implies that $d_k(a_k,b_k)\leq\delta_0.$ Therefore, $a_k\in(B_k)_{\delta_0}$. Hence it follows that $A_k\subseteq (B_k)_{\delta_0}$. Similarly, one can prove that $B_k\subseteq (A_k)_{\delta_0}$. Hence $\delta_0\in M_k$ and therefore, $$H_k(A_k,B_k)\leq \delta_0.$$ This is true for all $k=1,2,\ldots,m.$ Therefore, $$\sum_{k=1}^{m}(H_k(A_k,B_k))^2\leq m\delta_0^2.$$ Hence from (\ref{phm}), we get $$\frac{1}{\sqrt{m}}H_0(A,B)\leq\delta_0.$$ This is true for all $\delta_0\in M$. Hence
		$$\frac{1}{\sqrt{m}}H_0(A,B)\leq H'(A,B).$$
		Now, we prove the other inequality. Let $\epsilon>0.$ Then by the definition of infimum, there is a $\delta_k\in M_k,$ such that $$H_k(A_k,B_k)\leq\delta_k\leq H_k(A_k,B_k)+\epsilon\quad \mbox{for}~ k=1,2,\ldots,m.$$
		Let $\delta=\left\{\sum_{k=1}^m\delta_k^2\right\}^{\frac{1}{2}}$ and  $\vec{a}=(a_1,a_2,\ldots,a_m)\in A$. Then $a_k\in A_k$. Now $\delta_k\in M_k,$ implies $A_k\subseteq (B_k)_{\delta_k}$. Therefore, $a_k\in (B_k)_{\delta_k}$. So, there exist $b_k\in B_k$ such that $d_k(a_k,b_k)\leq \delta_k$ for $k=1,2,\ldots,m.$ Let $\vec{b}=(b_1,b_2,\ldots,b_m)$. Then $\vec{b}\in B$ and $$d_0(\vec{a},\vec{b})=\left\{\sum_{k=1}^{m}(d_k(a_k,b_k))^2\right \}^{\frac{1}{2}}\leq \left\{\sum_{k=1}^m\delta_k^2\right\}^{\frac{1}{2}}=\delta.$$ This implies that $\vec{a}\in B_\delta$. Hence $A\subseteq B_\delta$. Similarly, one can prove that $B\subseteq A_\delta.$ Therefore, $\delta\in M.$ Moreover, $$H'(A,B)\leq\delta=\left\{\sum_{k=1}^m\delta_k^2\right\}^{\frac{1}{2}}\leq \left\{\sum_{k=1}^{m}((H_k(A_k,B_k))+\epsilon)^2\right\}^{\frac{1}{2}}.$$ This is true for each $\epsilon>0$, so, by taking $\epsilon$ tends to zero, we get
		$$H'(A,B)\leq \left\{\sum_{k=1}^{m}(H_k(A_k,B_k))^2\right\}^{\frac{1}{2}}=H_0(A,B).$$
		Hence $$\frac{1}{\sqrt{m}}H_0(A,B)\leq H'(A,B)\leq H_0(A,B)\quad \mbox{for all}~A,B\in\mathscr{H}.$$
		This shows that $H_0$ and $H'$ are equivalent.
	\end{proof}
	Let $\left\{(X_k;w_{ki_k}):i_k=1,2,\ldots,N_k\right\}$ for $k=1,2,\ldots,m$, be the IFSs with contractivity factors $c_{ki_k}<1,$ of $w_{ki_k}$, and let $X=\prod_{k=1}^mX_k.$ Now define the maps $T_{i_1i_2\ldots i_m}:X\rightarrow X$ such that
	\begin{equation}
		T_{i_1i_2\ldots i_m}(\vec{x})=\left(w_{1i_1}(x_1),w_{2i_2}(x_2),\ldots,w_{mi_m}(x_m)\right) \quad \mbox{for all} \quad  \vec{x}=(x_1,x_2,\ldots,x_m)\in X,
		\label{pifs}
	\end{equation}
	where $1\leq i_k\leq N_k~\mbox{for}~k=1,2,\ldots,m.$
	Also equation (\ref{pifs}) can be written as
	\begin{equation}
		T_{i_1i_2\ldots i_m}(\vec{x})=\prod_{k=1}^mw_{ki_k}\pi_k(\vec{x})\quad \mbox{for all} \quad  \vec{x}\in X.
		\label{pho}
	\end{equation}
	Now for $\vec{x}=(x_1,x_2,\ldots,x_m), \vec{y}=(y_1,y_2,\ldots,y_m)\in X$, we have
	\begin{align*}
		d_0(T_{i_1i_2\ldots i_m}(\vec{x}),T_{i_1i_2\ldots i_m}(\vec{y}))=&\left\{\sum_{k=1}^{m}\left(d_k(w_{ki_k}(x_k),w_{ki_k}(y_k))\right)^2\right\}^\frac{1}{2}\\
		\leq&\left\{\sum_{k=1}^{m}\left(c_{ki_k}~d_k((x_k),(y_k))\right)^2\right\}^\frac{1}{2}\\
		\leq&c_{i_1i_2,\ldots i_m}~\left\{\sum_{k=1}^{m}\left(d_k((x_k),(y_k))\right)^2\right\}^\frac{1}{2},	
	\end{align*}
	where $c_{i_1i_2,\ldots i_m}=\max_{k}\left\{c_{ki_k}\right\}<1$. Therefore,
	\begin{equation}
		d_0(T_{i_1i_2\ldots i_m}(\vec{x}),T_{i_1i_2\ldots i_m}(\vec{y}))\leq c_{i_1i_2,\ldots i_m}~d_0(\vec{x},\vec{y})\quad \mbox{for all} \quad \vec{x},\vec{y}\in X.
	\end{equation}
	This shows that each $T_{i_1i_2\ldots i_m}$ are contraction map for $1\leq i_k\leq N_k~\mbox{and}~k=1,2,\ldots,m.$
	\begin{exm}
		Let, $X_1=X_2=[0,1]$ and $X=X_1\times X_2$. Also, let the maps $f_i:X_1\rightarrow X_1$ and $g_j:X_2\rightarrow X_2$ for $i,j=1,2,$ are given by
		\begin{align*}
			f_1(x)=&\frac{x}{3}, \quad\quad\quad\quad g_1(y)=\frac{y}{2}, \\
			f_2(x)=&\frac{x}{3}+\frac{2}{3},\quad\quad g_2(y)=\frac{y}{2}+\frac{1}{2}.	
		\end{align*}
		Then $T_{ij}:X\rightarrow X$ are given by
		$$T_{ij}(x,y)=(f_i(x),g_j(y))\quad \mbox{for}~i,j=1,2,$$
		and $d_0(T_{ij}(x_1,x_2),T_{ij}(y_1,y_2))\leq \frac{1}{2}d_0((x_1,x_2),(y_1,y_2))$, where $\frac{1}{2}=\max\{\frac{1}{3},\frac{1}{2}\}.$
		\label{expifs}
	\end{exm}
	Now, let $W_k$ and $T$ be the Hutchinson operators for the IFSs $\left\{(X_k;w_{ki_k}):i_k=1,2,\ldots,N_k\right\},$ for $k=1,2,\ldots,m,$ and the product IFS $\left\{(X,T_{i_1i_2\ldots i_m}):1\leq i_k\leq N_k~\mbox{for}~k=1,2,\ldots,m\right\}$ respectively. To get the relation between the Hutchinson operators $W_k$ and $T$, first we see the following Lemmas.
	\begin{lm}
		Let $A_1,B_1\subseteq X_1$ and $A_2,B_2\subseteq X_2$. Then $(A_1\cup B_1)\times(A_2\cup B_2)=(A_1\times A_2)\cup (A_1\times B_2)\cup (B_1\times A_2)\cup (B_1\times B_2).$
	\end{lm}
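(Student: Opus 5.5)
We prove the identity by double inclusion, working with elements of the product $X_1\times X_2$ and using only the definitions of Cartesian product and union; no earlier result of the paper is needed.

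For the inclusion $(A_1\cup B_1)\times(A_2\cup B_2)\subseteq (A_1\times A_2)\cup (A_1\times B_2)\cup (B_1\times A_2)\cup (B_1\times B_2)$, take an arbitrary pair $(x,y)$ with $x\in A_1\cup B_1$ and $y\in A_2\cup B_2$. We split into the four cases according to whether $x\in A_1$ or $x\in B_1$, and whether $y\in A_2$ or $y\in B_2$. In each case $(x,y)$ lies in the corresponding product set, namely $A_1\times A_2$, $A_1\times B_2$, $B_1\times A_2$ or $B_1\times B_2$. Hence it lies in their union. If $x$ belongs to both $A_1$ and $B_1$ (or $y$ to both $A_2$ and $B_2$), any applicable case suffices, so the cases need not be disjoint.

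For the reverse inclusion, we observe that each of the four sets on the right is contained in $(A_1\cup B_1)\times(A_2\cup B_2)$. This holds because $A_1,B_1\subseteq A_1\cup B_1$ and $A_2,B_2\subseteq A_2\cup B_2$, and the Cartesian product is monotone in each factor: if $P\subseteq P'$ and $Q\subseteq Q'$, then $P\times Q\subseteq P'\times Q'$. The union of four subsets of a set is again a subset of it, which gives the reverse inclusion.

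There is no genuine obstacle; the only point needing care is the case analysis in the first inclusion, where membership in a union must be split into both alternatives for each coordinate independently. We also note that the identity holds for arbitrary subsets, including empty ones, since both sides are then computed with the same convention $P\times\emptyset=\emptyset$. In the sequel this lemma will be extended by induction on the number of factors and of sets in each union, giving
\[
\prod_{k=1}^m\Bigl(\bigcup_{i_k=1}^{N_k}A_{k i_k}\Bigr)=\bigcup_{i_1,\ldots,i_m}\prod_{k=1}^m A_{k i_k}.
\]
This extended form is what relates the Hutchinson operator $T$ of the product IFS to the operators $W_k$.
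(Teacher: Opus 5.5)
Your proof is correct, but it takes a slightly different route from the paper's.

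The paper does not chase elements. It quotes the distributive law $A\times(B\cup C)=(A\times B)\cup(A\times C)$ with $A=A_1\cup B_1$, $B=A_2$, $C=B_2$. It then tacitly needs the companion law in the first factor, $(A_1\cup B_1)\times Q=(A_1\times Q)\cup(B_1\times Q)$, to split each of the two resulting pieces into the four terms. Your double-inclusion argument works directly from the definitions of union and Cartesian product, so it does not rely on that unstated second step.

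The paper's algebraic form is what makes the later ``induction'' to Lemma~\ref{pl} look routine. Your element-wise argument gives Lemma~\ref{pl} directly, with no induction. A point $(x_1,\ldots,x_m)\in\prod_{k}\bigl(\bigcup_{i_k}A_{ki_k}\bigr)$ has each $x_k$ in some $A_{ki_k}$, so it lies in $\prod_k A_{ki_k}$ for that choice of indices. The reverse inclusion is again monotonicity of the product in each factor.
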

	\begin{proof}
		Since, $A\times (B\cup C)=(A\times B)\cup (A\times C),$  if we put $A=(A_1\cup B_1)$, $B=A_2$ and $C=B_2$, then it follows that $(A_1\cup B_1)\times(A_2\cup B_2)=(A_1\times A_2)\cup (A_1\times B_2)\cup (B_1\times A_2)\cup (B_1\times B_2).$
	\end{proof}
	Using the mathematical induction theorem in the above Lemma we get the following.
	\begin{lm}
		Let $A_{ki_k}\subseteq X_k$ for $1\leq i_k\leq N_k$, $k=1,2,\ldots,m.$ Then
		$$ \bigcup_{i_1=1}^{N_1}\bigcup_{i_2=1}^{N_2}\cdots\bigcup_{i_m=1}^{N_m}\left(A_{1i_1}\times A_{2i_2}\times\cdots\times A_{mi_m}\right)=\prod_{k=1}^{m}\left(\bigcup_{i_k=1}^{N_k}A_{ki_k}\right).$$
		\label{pl}
	\end{lm}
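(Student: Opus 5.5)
The plan is to argue by induction on $m$, with the two-set Lemma just proved serving as the engine. That lemma only handles unions of two sets, so first I will upgrade it to finite unions within a single coordinate: for sets $A_{1i}\subseteq X_1$ ($1\leq i\leq N_1$) and $A_{2j}\subseteq X_2$ ($1\leq j\leq N_2$),
$$\bigcup_{i=1}^{N_1}\bigcup_{j=1}^{N_2}(A_{1i}\times A_{2j})=\Big(\bigcup_{i=1}^{N_1}A_{1i}\Big)\times\Big(\bigcup_{j=1}^{N_2}A_{2j}\Big).$$
This follows from the distributive law $A\times\bigcup_j B_j=\bigcup_j (A\times B_j)$ and its mirror image $\bigcup_i A_i\times B=\bigcup_i(A_i\times B)$, each applied once. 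Each law comes from a direct two-line element chase, or equivalently from repeated use of the preceding Lemma together with an induction on $N_1$ and $N_2$.

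\textbf{Base case and inductive step.} The case $m=1$ is trivial, and the displayed identity is exactly the case $m=2$. Now assume the statement holds for $m-1$ factors, and fix $i_1,\dots,i_{m-1}$. I will identify $\prod_{k=1}^{m}X_k$ with $\left(\prod_{k=1}^{m-1}X_k\right)\times X_m$ through the canonical bijection $((x_1,\dots,x_{m-1}),x_m)\mapsto(x_1,\dots,x_m)$. Under this identification the innermost union over $i_m$ becomes
$$\bigcup_{i_m}(A_{1i_1}\times\cdots\times A_{mi_m})=(A_{1i_1}\times\cdots\times A_{m-1,i_{m-1}})\times\bigcup_{i_m}A_{mi_m}.$$
Taking the union over the remaining indices and applying the mirror distributive law pulls $\bigcup_{i_m}A_{mi_m}$ outside. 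The induction hypothesis then turns the remaining union into $\prod_{k=1}^{m-1}\bigcup_{i_k}A_{ki_k}$, which completes the step.

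\textbf{Main obstacle.} The only delicate point is bookkeeping: products of more than two factors must be identified with iterated binary products so that the two-set Lemma applies. I will make this identification explicit once at the start of the inductive step. As a fallback, I would simply verify the identity elementwise: $\vec{x}$ lies in the left side if and only if there are indices $i_k$ with $x_k\in A_{ki_k}$ for every $k$, and this holds if and only if $x_k\in\bigcup_{i_k}A_{ki_k}$ for every $k$, which is membership in the right side. Since the choices of $i_k$ are independent across coordinates, this elementwise argument proves the Lemma directly and avoids the induction entirely.
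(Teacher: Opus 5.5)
Your proposal is correct and follows the paper's route. The paper only says that the lemma follows from the preceding two-set Lemma by mathematical induction; your induction on $m$ supplies the details it omits, namely upgrading the two-set identity to finite unions via the distributive laws and identifying $\prod_{k=1}^m X_k$ with $(\prod_{k=1}^{m-1}X_k)\times X_m$. Your elementwise fallback is also a complete, self-contained proof that avoids the bookkeeping about iterated products, including the case where some $A_{ki_k}$ are empty.
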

	\begin{thm}
		For all $B=\prod_{k=1}^mB_k\in \mathscr{H}$, $T_0(B)=\prod_{k=1}^mW_k(B_k)$, where $T_0$ is the restriction of $T$ on the space $\mathscr{H}\subseteq \mathscr{H}(X)$, that is $T_0=T_{|\mathscr{H}}$.
		\label{rbho}
	\end{thm}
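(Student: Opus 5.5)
The plan is to unfold the definition of the Hutchinson operator $T$ of the product IFS and then use Lemma~\ref{pl} to regroup the resulting union as a product of unions. Fix $B=\prod_{k=1}^mB_k\in\mathscr{H}$. By definition,
$$T_0(B)=T(B)=\bigcup_{i_1=1}^{N_1}\bigcup_{i_2=1}^{N_2}\cdots\bigcup_{i_m=1}^{N_m}T_{i_1i_2\ldots i_m}(B).$$

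The first step is to compute the image of a product set under a single map $T_{i_1i_2\ldots i_m}$. I would show that
$$T_{i_1i_2\ldots i_m}\Big(\prod_{k=1}^mB_k\Big)=\prod_{k=1}^m w_{ki_k}(B_k).$$
For ``$\subseteq$'', take $\vec{x}=(x_1,\ldots,x_m)$ with $x_k\in B_k$. By (\ref{pifs}), $T_{i_1\ldots i_m}(\vec{x})=(w_{1i_1}(x_1),\ldots,w_{mi_m}(x_m))$, and each coordinate lies in $w_{ki_k}(B_k)$. For ``$\supseteq$'', take $(y_1,\ldots,y_m)$ with $y_k=w_{ki_k}(x_k)$ and $x_k\in B_k$. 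Then $\vec{x}=(x_1,\ldots,x_m)\in B$ is a preimage. The essential point here is that the map acts coordinatewise, so the coordinates can be chosen independently.

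The second step applies Lemma~\ref{pl} with $A_{ki_k}=w_{ki_k}(B_k)$:
$$T(B)=\bigcup_{i_1,\ldots,i_m}\prod_{k=1}^m w_{ki_k}(B_k)=\prod_{k=1}^m\Big(\bigcup_{i_k=1}^{N_k}w_{ki_k}(B_k)\Big)=\prod_{k=1}^mW_k(B_k).$$
Since each $W_k(B_k)\in\mathscr{H}(X_k)$, this also shows that $T_0$ maps $\mathscr{H}$ into $\mathscr{H}$.

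None of the steps is a real obstacle. The only point needing care is the coordinatewise image identity in the first step, specifically the ``$\supseteq$'' direction. That direction uses the full product structure of $B$, and it would fail for a general compact $B\subseteq X$ that is not a product set.
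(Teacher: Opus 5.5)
Your proposal is correct and is essentially the paper's proof: you write $T_0(B)$ as the union of the sets $T_{i_1i_2\ldots i_m}(B)$, identify each of these as $\prod_{k=1}^m w_{ki_k}(B_k)$ using the coordinatewise form (\ref{pho}), and regroup the union with Lemma~\ref{pl}. Your two-inclusion check of the single-map image identity spells out a step the paper treats as immediate.
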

	\begin{proof}
		Since $T_0(B)=\bigcup_{i_1=1}^{N_1}\bigcup_{i_2=1}^{N_2}\cdots\bigcup_{i_m=1}^{N_m}T_{i_1i_2\ldots i_m}(B)\quad \mbox{for all}~B\in\mathscr{H}.$ Then from equation (\ref{pho}), we get $$T_0(B)=\bigcup_{i_1=1}^{N_1}\bigcup_{i_2=1}^{N_2}\cdots\bigcup_{i_m=1}^{N_m}\left(\prod_{k=1}^mw_{ki_k}(B_k)\right).$$
		Therefore, using Lemma \ref{pl}, we have
		$$
		T_0(B)=\prod_{k=1}^{m}\left(\bigcup_{i_k=1}^{N_k}w_{ki_k}(B_k)\right).$$
		Hence $$T_0(B)=\prod_{k=1}^{m}W_k(B_k)\quad \mbox{for all}~B=\prod_{k=1}^mB_k\in \mathscr{H}\quad [\mbox{as}~W_k~\mbox{are Hutchinson operators.}]$$
	\end{proof}
	\begin{thm}
		The operator $T_0:\mathscr{H}\rightarrow\mathscr{H}$ is contraction mapping with respect to the metric $H_0$.
		\label{conofpho}
	\end{thm}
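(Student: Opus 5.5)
The plan is to reduce everything to the coordinate Hutchinson operators, using Theorem~\ref{rbho}, and then apply Theorem~\ref{conth} in each coordinate.

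First, $T_0$ maps $\mathscr{H}$ into $\mathscr{H}$. For $B=\prod_{k=1}^mB_k\in\mathscr{H}$, Theorem~\ref{rbho} gives $T_0(B)=\prod_{k=1}^mW_k(B_k)$. Each $W_k(B_k)\in\mathscr{H}(X_k)$, being a finite union of continuous images of a nonempty compact set. Hence $T_0(B)\in\mathscr{H}$.

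Next, take $A=\prod_{k}A_k$ and $B=\prod_kB_k$ in $\mathscr{H}$. By the definition (\ref{phm}) of $H_0$ and Theorem~\ref{rbho},
$$H_0(T_0(A),T_0(B))=\left\{\sum_{k=1}^m\big(H_k(W_k(A_k),W_k(B_k))\big)^2\right\}^{\frac12}.$$
By Theorem~\ref{conth} applied to the $k$-th IFS, $H_k(W_k(A_k),W_k(B_k))\leq c_k\,H_k(A_k,B_k)$, where $c_k=\max_{1\leq i_k\leq N_k}c_{ki_k}<1$. Setting $c=\max_{1\leq k\leq m}c_k<1$, we get
$$H_0(T_0(A),T_0(B))\leq c\left\{\sum_{k=1}^m(H_k(A_k,B_k))^2\right\}^{\frac12}=c\,H_0(A,B),$$
so $T_0$ is a contraction with factor $c$. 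This $c$ coincides with $\max_{i_1,\ldots,i_m}c_{i_1i_2\ldots i_m}$.

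The only delicate point is that $H_0(A,B)$ must be well defined, i.e. the factors $A_k$ of a product set are uniquely determined by $A$. This holds because every factor is nonempty, so $A_k=\pi_k(A)$. The rest is routine.
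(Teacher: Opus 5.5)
Your proposal is correct and follows essentially the same route as the paper: rewrite $T_0(A)$ and $T_0(B)$ via Theorem~\ref{rbho}, apply Theorem~\ref{conth} in each coordinate, and bound by $C=\max_k C_k<1$. Your remarks that $T_0(B)\in\mathscr{H}$ and that $H_0$ is well defined because $A_k=\pi_k(A)$ when all factors are nonempty are correct, and they make explicit points the paper leaves implicit.
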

	\begin{proof}
		From Theorem~\ref{conth}, we get $$H_k\left(W_k(A_k),W_k(B_k)\right)\leq C_k~H_k(A_k,B_k)\quad\mbox{for all}~A_k,B_k\in\mathscr{H}(X_k),$$ where $C_k=\max\{c_{ki_k}:i_k=1,2,\ldots,N_k\}<1.$ Now for all $A=\prod_{k=1}^mA_k,~B=\prod_{k=1}^mB_k\in \mathscr{H}$, we have from above Theorem
		\begin{align*}
			H_0(T_0(A),T_0(B))=&\left\{\sum_{k=1}^{m}\left[H_k(W_k(A_k),W_k(B_k))\right]^2\right\}^\frac{1}{2}\\
			\leq&\left\{\sum_{k=1}^{m}\left[C_k~H_k(A_k,B_k)\right]^2\right\}^\frac{1}{2}\\
			\leq&C~H_0(A,B),
		\end{align*}
		where $C=\max_{k}\{C_k\}<1$.
	\end{proof}
	Since $T_0$ is a contraction map on the complete metric space $(\mathscr{H},H_0)$, by Banach fixed point theorem  $T_0$ has unique fixed point. We call it as \textbf{homogeneous product attractor} of the product IFS $\left\{(X,T_{i_1i_2\ldots i_m}):1\leq i_k\leq N_k~\mbox{for}~k=1,2,\ldots,m\right\}$. Now, we see the relation between the homogeneous product attractor and the homogeneous attractors of the co-ordinate IFSs.
	\begin{thm}
		If $F\in\mathscr{H}$ is the product attractor of $T_0$ and $F_k\in\mathscr{H}(X_k)$ are the attractors of $W_k$ for $k=1,2,\ldots,m$, then $F=\prod_{k=1}^{m}F_k.$
		\label{rbtaoifs}
	\end{thm}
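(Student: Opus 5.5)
The plan is to use uniqueness of the fixed point of $T_0$ on the complete metric space $(\mathscr{H},H_0)$, which holds by Theorem~\ref{conofpho} and Theorem~\ref{fpt}. It therefore suffices to show that the product set $\prod_{k=1}^mF_k$ is itself a fixed point of $T_0$.

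First, since each $F_k\in\mathscr{H}(X_k)$ is nonempty and compact, the set $\prod_{k=1}^mF_k$ is nonempty and compact, so it belongs to $\mathscr{H}$ by definition (\ref{p of h}). Next, apply Theorem~\ref{rbho} to $B=\prod_{k=1}^mF_k$:
$$T_0\Big(\prod_{k=1}^mF_k\Big)=\prod_{k=1}^mW_k(F_k)=\prod_{k=1}^mF_k,$$
where the second equality uses $W_k(F_k)=F_k$, since $F_k$ is the attractor of $W_k$. Hence $\prod_{k=1}^mF_k$ is a fixed point of $T_0$ in $\mathscr{H}$. By uniqueness of the fixed point, $F=\prod_{k=1}^mF_k$.

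As an alternative check, one can take any $B=\prod_k B_k\in\mathscr{H}$ and iterate. By Theorem~\ref{rbho} and induction, $T_0^n(B)=\prod_k W_k^n(B_k)$. By the definition (\ref{phm}) of $H_0$, this sequence converges to $\prod_k F_k$ in $H_0$, because $H_k(W_k^n(B_k),F_k)\to 0$ for each $k$. On the other hand, $T_0^n(B)\to F$ by Theorem~\ref{fpt}, so $F=\prod_kF_k$.

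No step is a real obstacle. The only point to watch is that the fixed point is taken in $\mathscr{H}$ rather than in $\mathscr{H}(X)$, so one must confirm that $\prod_kF_k$ lies in $\mathscr{H}$ and that $T_0$ maps $\mathscr{H}$ into itself. Theorem~\ref{rbho} gives the latter directly. Moreover, because $\mathscr{H}\subseteq\mathscr{H}(X)$ and $T_0=T_{|\mathscr{H}}$, the set $F$ is also the usual attractor of $T$ on $(\mathscr{H}(X),H)$, by uniqueness there via Theorem~\ref{conth}.
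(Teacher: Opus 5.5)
Your proof is correct, but it runs in the opposite direction from the paper's. The paper writes $F=\prod_{k=1}^mE_k$ and uses Theorem~\ref{rbho} to turn $T_0(F)=F$ into $\prod_{k=1}^mW_k(E_k)=\prod_{k=1}^mE_k$. It then applies the projections $\pi_l$ (legitimate because every factor is nonempty) to get $W_l(E_l)=E_l$. Finally, uniqueness of the attractor of each coordinate operator $W_l$ gives $E_l=F_l$. You instead build the candidate $\prod_kF_k$, check that it is invariant under $T_0$, and invoke uniqueness of the fixed point of the product operator.

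What each route buys:
\begin{itemize}
\item The paper's argument needs no contraction estimate for $T_0$. It uses only coordinatewise uniqueness and the fact that $F$ is a product set. So it shows directly that any fixed point of $T_0$ lying in $\mathscr{H}$ must be $\prod_kF_k$.
\item Your main argument relies on Theorem~\ref{conofpho} for uniqueness (only "at most one fixed point" is needed, which holds for any contraction). In exchange it exhibits the fixed point explicitly, without appealing to completeness of $(\mathscr{H},H_0)$.
\item The variant you mention at the end, uniqueness in $(\mathscr{H}(X),H)$ via Theorem~\ref{conth}, is the strongest version. Each $T_{i_1i_2\ldots i_m}$ is a contraction on $(X,d_0)$, and the identity $T\big(\prod_kF_k\big)=\prod_kW_k(F_k)$ is purely set-theoretic (Lemma~\ref{pl}). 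Together these show that the ordinary attractor of the product IFS is $\prod_kF_k$. The membership $F\in\mathscr{H}$, which the paper takes as a hypothesis, then becomes a conclusion, and neither $H_0$ nor Theorem~\ref{evt} is needed.
\end{itemize}

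Your iteration check is also valid, but it adds nothing beyond the direct fixed-point verification.
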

	\begin{proof}
		Let $F=\prod_{k=1}^mE_k$.
		Since $F$ is the attractor of $T_0$, we have  $T_0(F)=F$. Then by Theorem \ref{rbho} we get
		\begin{equation}
			\prod_{k=1}^{m}W_k(E_k)=F.
			\label{atrre}
		\end{equation}
		Applying the projection map $\pi_l$ ($1\leq l\leq m$) on both side of the above equation (\ref{atrre}), we have $$W_l(E_l)=\pi_l(F)=E_l.$$
		But the fixed points are unique, so, $E_l=F_l.$ Therefore,
		$$F=\prod_{k=1}^{m}W_k(F_k)\quad [\mbox{using (\ref{atrre})}.]$$
		Since $F_k$ are the fixed points of $W_k$ for $k=1,2,\ldots,m.$ Hence $F=\prod_{k=1}^mF_k.$
	\end{proof}
	\begin{cor}
		If  $F=\prod_{k=1}^mF_k\subseteq X$ is the attractor of $T_0$, then $F_k\subseteq X_k$ are the attractors of $W_k$ for $k=1,2,\ldots\,m.$ Conversely, if $F_k$ are the attractors of $W_k$ for $k=1,2,\ldots\,m$, then $\prod_{k=1}^mF_k$ is the attractor of $T_0$.
		\label{corat}
	\end{cor}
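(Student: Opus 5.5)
The corollary is a two-way restatement of Theorem~\ref{rbtaoifs}, and I will derive both directions from the uniqueness of fixed points. The tools are Theorem~\ref{rbho}, which says $T_0(\prod_k B_k)=\prod_k W_k(B_k)$; the Banach fixed point theorem (Theorem~\ref{fpt}), applied both to $T_0$ on the complete space $(\mathscr{H},H_0)$ (using Theorem~\ref{conofpho}) and to each $W_k$ on $(\mathscr{H}(X_k),H_k)$ (using Theorem~\ref{conth}); and the fact that each factor is nonempty, so projections recover the factors.

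\emph{Forward direction.} Suppose $F=\prod_{k=1}^mF_k$ is the attractor of $T_0$. By Theorem~\ref{rbho},
$$\prod_{k=1}^m F_k=F=T_0(F)=\prod_{k=1}^m W_k(F_k).$$
All the $F_k$ and $W_k(F_k)$ are nonempty. So applying the projection $\pi_l$ to both sides gives $F_l=\pi_l(F)=W_l(F_l)$ for each $l$. Thus $F_l$ is a fixed point of $W_l$ in $\mathscr{H}(X_l)$. Uniqueness of the fixed point of the contraction $W_l$ then says $F_l$ is the attractor of $W_l$.

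\emph{Converse.} Suppose each $F_k$ is the attractor of $W_k$. Then $\prod_k F_k\in\mathscr{H}$, and by Theorem~\ref{rbho}
$$T_0\Big(\prod_{k=1}^mF_k\Big)=\prod_{k=1}^m W_k(F_k)=\prod_{k=1}^mF_k.$$
So $\prod_k F_k$ is a fixed point of $T_0$. Since $T_0$ has a unique fixed point in $\mathscr{H}$, this product is the attractor of $T_0$. Alternatively, the converse follows at once from Theorem~\ref{rbtaoifs}.

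\emph{Where care is needed.} The only delicate step is recovering the factors from the product identity, i.e.\ justifying $\pi_l(\prod_k A_k)=A_l$. This holds only because every $A_k$ is nonempty, which is guaranteed since each lies in $\mathscr{H}(X_k)$. I will state this explicitly. Everything else is a direct appeal to uniqueness of fixed points.
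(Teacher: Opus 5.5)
Your proposal is correct and follows essentially the paper's route. The forward direction is the same projection-plus-uniqueness argument used in the proof of Theorem~\ref{rbtaoifs}, via Theorem~\ref{rbho} and the nonemptiness of the factors. Your converse checks directly that $\prod_{k=1}^mF_k$ is a fixed point of $T_0$ and invokes uniqueness of that fixed point on $(\mathscr{H},H_0)$; this is just the content of Theorem~\ref{rbtaoifs}, from which the paper draws the corollary.
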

	\begin{exm}
		Let $X_1=X_2=[0,1]$ and $X=X_1\times X_2$. Also, let the maps $f_i:X_1\rightarrow X_1$ for $ i=1,2$, and $g_j:X_2\rightarrow X_2$ for $j=1,2,3$, be given by
		\begin{align*}
			f_1(x)=&\frac{x}{2};\quad f_2(x)=\frac{x}{2}+\frac{1}{2};\\
			g_1(x)=&\frac{x}{6};\quad g_2(x)=\frac{x}{6}+\frac{3}{6};\quad g_3(x)=\frac{x}{6}+\frac{5}{6}.
		\end{align*}
		Then
		\begin{align*}
			T_{11}(x,y)=&\left(\frac{x}{2},\frac{y}{6}\right),\quad \quad T_{12}(x,y)=\left(\frac{x}{2},\frac{y}{6}+\frac{3}{6}\right),\quad\quad T_{13}(x,y)=\left(\frac{x}{2},\frac{y}{6}+\frac{5}{6}\right),\\
			T_{21}(x,y)=&\left(\frac{x}{2}+\frac{1}{2},\frac{y}{6}\right),\quad T_{22}(x,y)=\left(\frac{x}{2}+\frac{1}{2},\frac{y}{6}+\frac{3}{6}\right),\quad T_{23}(x,y)=\left(\frac{x}{2}+\frac{1}{2},\frac{y}{6}+\frac{5}{6}\right).
		\end{align*}
		Since  $F_1=[0,1]$ is the attractor of the IFS $\{(X_1;f_i):i=1,2\}$ and if $F_2$ is the attractor of	the IFS $\left\{(X_2;g_j):j=1,2,3\right\},$
		then $F=F_1\times F_2$ is the attractor of the product IFS $\{(X;T_{ij}):1\leq i\leq 2,~1\leq j\leq 3\}$ with $\pi_1(F)=F_1$ and $\pi_2(F)=F_2,$ see the Figure~\ref{fig1}.
		\label{affine exam}
	\end{exm}
	\begin{figure}[!ht]
		\centering
		\includegraphics[angle=0,height=0.4\textheight, width=0.4\textwidth]{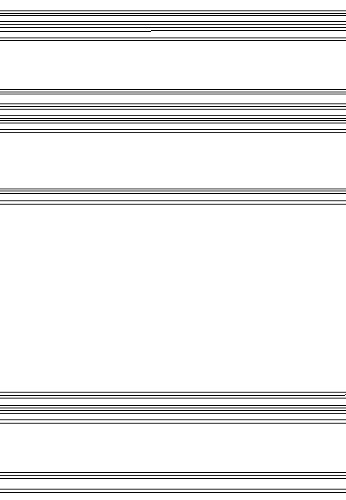}
		\caption{The product $F_1\times F_2$.} \label{fig1}
	\end{figure}
	\begin{nt}
		If all $w_{ki_k}$ are similar transformation, then $T_{i_1i_2\ldots i_m}$ are not necessary similar transformation (see the Example~\ref{affine exam}).
	\end{nt}
	\begin{dfn}
		Let $(Y,d_Y)$ is a metric space. We say that an IFS $\{(Y;f_i):i=1,2,\ldots,N\}$ satisfy the open set condition (OSC) if there exist a non-empty bounded open set $V\subseteq Y,$ such that $$\bigcup_{i=1}^Nf_i(V)\subseteq V\quad\mbox{and}\quad f_i(V)\cap f_j(V)=\emptyset\quad\mbox{for}~i\neq j.$$ 	
	\end{dfn}
	\begin{thm}
		For all $k=1,2,\ldots,m,$ the IFSs $\left\{(X_k;w_{ki_k}):i_k=1,2,\ldots,N_k\right\}$ satisfies OSC if and only if the product IFS $\left\{(X,T_{i_1i_2\ldots i_m}):1\leq i_k\leq N_k~\mbox{for}~k=1,2,\ldots,m\right\}$ satisfy the OSC.
		\label{osc}
	\end{thm}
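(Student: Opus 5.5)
For the forward direction I would take open sets $V_k\subseteq X_k$ witnessing the OSC for each coordinate IFS and put $V=\prod_{k=1}^mV_k$. This set is nonempty and bounded, and it is open in the product metric $d_0$, since $d_0$ induces the product topology. By (\ref{pho}) we have $T_{i_1i_2\ldots i_m}(V)=\prod_{k=1}^m w_{ki_k}(V_k)$. Lemma~\ref{pl} then gives
$$\bigcup_{i_1,\ldots,i_m}T_{i_1i_2\ldots i_m}(V)=\prod_{k=1}^m\Big(\bigcup_{i_k=1}^{N_k}w_{ki_k}(V_k)\Big)\subseteq\prod_{k=1}^mV_k=V.$$
For disjointness, take two distinct multi-indices $(i_1,\ldots,i_m)\neq(j_1,\ldots,j_m)$. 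They differ in some coordinate $l$, so $w_{li_l}(V_l)\cap w_{lj_l}(V_l)=\emptyset$. The images $\prod_k w_{ki_k}(V_k)$ and $\prod_k w_{kj_k}(V_k)$ are products, and their $l$-th factors are disjoint, so the images are disjoint.

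For the converse I would start from an open bounded nonempty $V\subseteq X$ witnessing the OSC for the product IFS. The obvious first candidate for coordinate $l$ is the projection $\pi_l(V)$. Projections are open maps on product spaces, so $\pi_l(V)$ is open, nonempty and bounded. The inclusion $w_{li}(\pi_l(V))\subseteq\pi_l(V)$ follows by projecting $T_{i_1\ldots i_m}(V)\subseteq V$ onto coordinate $l$, because $\pi_l\circ T_{i_1\ldots i_m}=w_{li_l}\circ\pi_l$.

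Disjointness is the main obstacle. Suppose $i\neq j$ in coordinate $l$, and fix the other indices equal, say $i_k=j_k$ for $k\neq l$. Then $T_{\ldots i\ldots}(V)\cap T_{\ldots j\ldots}(V)=\emptyset$. This does not immediately give $w_{li}(\pi_l V)\cap w_{lj}(\pi_l V)=\emptyset$, because $V$ need not be a product. I would first try to replace $V$ by a product box: choose a point $\vec{v}\in V$ and an open box $U=\prod_k U_k\subseteq V$ around it, and test $U_l$ as the coordinate open set. Disjointness for $U$ is inherited from $V$. Moreover, if the other coordinates are fixed and only index $l$ differs, two images with a common $l$-th point $w_{li}(a)=w_{lj}(b)$ would force an intersection of the product images. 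That follows by choosing the remaining coordinates from the common factors $w_{ki_k}(U_k)$, which gives disjointness of $w_{li}(U_l)$ and $w_{lj}(U_l)$.

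The invariance $w_{li}(U_l)\subseteq U_l$ is then what fails. My fallback is to take $V_l:=\bigcup_{n\ge0}\bigcup_{\textbf{i}}w_{l\textbf{i}}(U_l)$, the forward orbit of $U_l$. This set is open if the maps are open, bounded since it lies in $\pi_l(V)$, and invariant by construction. For disjointness I would argue that points of $w_{l\textbf{i}}(U_l)$ lift to points of $T_{\textbf{I}}(U)\subseteq V$ with common other coordinates. Disjointness of $T$-images of $V$ then transfers to disjointness of $w_{li}(V_l)$ and $w_{lj}(V_l)$ by the same product-intersection argument, choosing the remaining coordinates from the common factors of the composed maps. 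I expect this lifting step, which needs the other coordinates of the lifts to be chosen consistently, to need the most care. If it fails in general, I would restrict to the case where the witnessing set may be taken to be a product, which is the situation the paper likely intends.
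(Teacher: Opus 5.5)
Your forward direction is the paper's argument: $V=\prod_k V_k$, and product sets with disjoint $l$-th factors are disjoint. Your justification of $w_{li}(\pi_l(V))\subseteq\pi_l(V)$ via $\pi_l\circ T_{i_1\ldots i_m}=w_{li_l}\circ\pi_l$ is also correct. For the converse, the paper takes $\pi_k(U)$ and reads off disjointness by treating $T_{i_1\ldots i_m}(U)$ as $\prod_k w_{ki_k}(\pi_k(U))$. That is valid only when $U$ is a product, which is exactly the inference you rightly reject, so the paper does not supply the missing idea.

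Your repair has a genuine gap. First, $\bigcup_{\mathbf i}w_{l\mathbf i}(U_l)$ is open only if the maps are open, and that is not assumed. More seriously, the lifting step fails. A point of $w_{l\mathbf i}(U_l)$ with $|\mathbf i|=n$ lifts into $V$ with $k$-th coordinates in $w_{k\mathbf i_k}(U_k)$, where $|\mathbf i_k|=n$. For two points coming from words of different lengths these sets need not meet. So you never obtain two points of $V$ with common non-$l$ coordinates, and disjointness of $T_{\mathbf I}(V)$ and $T_{\mathbf J}(V)$ gives no contradiction.

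A concrete example: take $X_1=X_2=\mathbb{R}$, $w_{11}(x)=x/3$, $w_{12}(x)=x/3+2/3$, $w_{21}(y)=y/2$, and
\[
V=\big((0,1)\times(0,1)\big)\cup\bigcup_{n\ge 0}\ \bigcup_{|\mathbf i|=n} w_{1\mathbf i}\big((2,3)\big)\times(2^{-n},2^{1-n}).
\]
This $V$ is open, bounded and invariant. Also $T_{11}(V)\cap T_{21}(V)=\emptyset$: a common point would need $(x,y),(x-2,y)\in V$, but every horizontal fibre of $V$ has diameter less than $2$. For $y<1$ the fibre lies in $[0,5/3]$, and for $1<y<2$ it is $(2,3)$.

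Your construction fails here. The box $(2.1,2.9)\times(1.1,1.9)$ lies in $V$, and its orbit set contains $2.25$ and $w_{11}w_{11}(2.25)=0.25$. But $w_{11}(2.25)=0.75=w_{12}(0.25)$. The paper's choice fails the same way, since $\pi_1(V)\supseteq(0,1)\cup(2,3)$. Coordinate 1 does satisfy the OSC here, via $(0,1)$, so the example defeats the constructions, not the statement.

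What your lifting actually needs is a common lift. Suppose $V$ meets the section where $x_k=y_k^*$ for all $k\neq l$, with $y_k^*$ the fixed point of some $w_{kj_k}$. Then the set of $x\in X_l$ whose point with those other coordinates lies in $V$ is open, bounded, invariant and separated, and no orbit construction is needed. The OSC does not guarantee such a section is hit; in the example $V$ misses $y=0$. Your fallback of assuming a product witness is an extra hypothesis, so the converse remains unproved.
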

	\begin{proof}
		Let $W_k$ and $T_0$  be defined as above. For all $k=1,2,\ldots,m,$ consider the IFSs $\left\{(X_k;w_{ki_k}):i_k=1,2,\ldots,N_k\right\}$ that satisfies OSC. Then there exist non-empty bounded open sets $V_k\subseteq X_k$ for $k=1,2,\ldots,m,$ such that $$W_k(V_k)\subseteq V_k\quad\mbox{and}\quad w_{ki_k}(V_k)\cap w_{kj_k}(V_k)=\emptyset\quad\mbox{for}~i_k\neq j_k,$$
		where $i_k,j_k\in\{1,2,\ldots,N_k\}.$ Now, let $V=\prod_{k=1}^{m}V_k.$ Then $V$ is non-empty bounded open subset of $X,$ and $$
		T_0(V)=\prod_{k=1}^mW_k(V_k)\subseteq\prod_{k=1}^mV_k=V.$$
		Also,
		\begin{align*}
			T_{i_1i_2\ldots i_m}(V)\cap T_{j_1j_2\ldots j_m}(V)=&\prod_{k=1}^mw_{ki_k}(V_k)\cap \prod_{k=1}^mw_{kj_k}(V_k)\\
			=&\prod_{k=1}^m\left(w_{ki_k}(V_k)\cap w_{kj_k}(V_k)\right)\\
			=&\emptyset \qquad \mbox{if}~i_k\neq j_k\quad\mbox{for}~i_k,j_k\in\{1,2,\ldots,N_k\}.
		\end{align*}
		Conversely, consider the product IFS $\left\{(X,T_{i_1i_2\ldots i_m}):1\leq i_k\leq N_k~\mbox{for}~k=1,2,\ldots,m\right\}$ satisfy OSC. Then there exist a non-empty bounded open set $U\subseteq X$, such that $T_0(U)\subseteq U$, and $$T_{i_1i_2\ldots i_m}(U)\cap T_{j_1j_2\ldots j_m}(U)=\emptyset\quad \mbox{if}~i_k\neq j_k\in\{1,2,\ldots,N_k\}. $$ Since projection map is a open map so, each set $\pi_k(U)$ is open in $X_k$, and also non-empty bounded. Now $T_0(U)\subseteq U$ implies
		$$\prod_{k=1}^mW_k(\pi_k(U))\subseteq U.$$
		Therefore, applying the projection map $\pi_l$ in both side of the above inequality we get
		$$W_l(\pi_l(U))\subseteq \pi_l(U).$$
		Let $i_k\neq j_k\in\{1,2,\ldots,N_k\}$. Then $$T_{l_1l_2\cdots i_k\cdots l_m}(U)\cap T_{l_1l_2\cdots j_k\cdots l_m}(U)=\emptyset\quad\mbox{for some}\quad l_p\in\{1,2,\ldots,N_p\},$$
		where, $p=1,2,\ldots,k-1,k+1,\ldots,m.$ This shows that $$w_{1l_1}(\pi_1(U))\times w_{2l_2}(\pi_2(U))\times\cdots\times (w_{ki_k}(\pi_k(U))\cap w_{kj_k}(\pi_k(U)))\times\cdots\times w_{ml_m}(\pi_m(U))=\emptyset.$$
		Since each $w_{pl_p}(\pi_p(U))\neq\emptyset$ for all $p=1,2,\ldots,m$. Hence $$w_{ki_k}(\pi_k(U))\cap w_{kj_k}(\pi_k(U))=\emptyset.$$ Therefore, for all $k=1,2,\ldots,m,$ the IFSs $\left\{(X_k;w_{ki_k}):i_k=1,2,\ldots,N_k\right\}$ satisfies OSC.
	\end{proof}
	The following theorem provides a result on the dimension of the product homogeneous attractor for self-similar cases.
	
	\begin{thm}
		Let $X_k=\mathbb{R}^{m_k}$ for some $m_k\in\mathbb{N}$, and let $\left\{(X_k;w_{ki_k}):i_k=1,2,\ldots,N_k\right\}$ for $k=1,2,\ldots,m$, be the IFSs with contraction similarities $c_{ki_k}<1$, and satisfies the open set condition (OSC) for all $k=1,2,\ldots,m$. If $F=\prod_{k=1}^mF_k$ be the attractor of the product IFS $\left\{(X,T_{i_1i_2\ldots i_m}):1\leq i_k\leq N_k~\mbox{for}~k=1,2,\ldots,m\right\}$, then $\dim_HF=\dim_BF=\sum_{k=1}^{m}s_k$, where $s_k$ is the solution of the equation $\sum_{i_k=1}^{N_k}c_{ki_k}^{s_k}=1,$ for $k=1,2,\ldots,m.$
	\end{thm}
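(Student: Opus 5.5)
By Theorem~\ref{rbtaoifs} and Corollary~\ref{corat}, the attractor $F$ of the product IFS is exactly $\prod_{k=1}^mF_k$, where $F_k\subseteq\mathbb{R}^{m_k}$ is the attractor of $\left\{(X_k;w_{ki_k}):i_k=1,\ldots,N_k\right\}$. This reduces the statement to two things: dimension information for each coordinate attractor $F_k$, and a product formula for dimensions. The product IFS itself is not self-similar (see the Note after Example~\ref{affine exam}), so we will not try to apply Moran's theorem to it directly. Instead we work coordinatewise.

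\textbf{Step 1 (coordinate attractors).} Each coordinate IFS consists of similarities on $\mathbb{R}^{m_k}$ with ratios $c_{ki_k}$ and satisfies the OSC. The classical Moran–Hutchinson theorem (see \cite{FG}) then gives
$$\dim_HF_k=\dim_BF_k=s_k,\qquad \sum_{i_k=1}^{N_k}c_{ki_k}^{s_k}=1 .$$
In particular the box dimension of $F_k$ exists, so $\overline{\dim}_BF_k=\underline{\dim}_BF_k=s_k$. In the language of Definition~\ref{dfssset}, each $F_k$ is self-similar.

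\textbf{Step 2 (product formula, by induction on $m$).} Put $G_j=\prod_{k=1}^jF_k\subseteq\mathbb{R}^{m_1+\cdots+m_j}$. The inductive claim is $\dim_HG_j=\overline{\dim}_BG_j=\sum_{k\le j}s_k$.

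For the step from $G_{j-1}$ to $G_j=G_{j-1}\times F_j$, apply Theorem~\ref{rotpset} with $E=G_{j-1}$ and $F=F_j$:
$$\sum_{k\le j}s_k=\dim_HG_{j-1}+\dim_HF_j\le\dim_HG_j\le\overline{\dim}_BG_j\le\overline{\dim}_BG_{j-1}+\overline{\dim}_BF_j=\sum_{k\le j}s_k .$$
The chain collapses to equalities. The only inputs are that Hausdorff and upper box dimension agree on each factor, which holds by the inductive hypothesis and Step 1. Theorem~\ref{sslm} could be used directly when $m=2$, but Theorem~\ref{rotpset} handles general $m$ without needing the intermediate products $G_{j-1}$ to be self-similar, so I will use it.

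\textbf{Step 3 (lower box dimension).} The statement asserts that $\dim_BF$ exists, so we also need $\underline{\dim}_BF\ge\dim_HF$. This is the standard inequality $\dim_H\le\underline{\dim}_B\le\overline{\dim}_B$, valid for any bounded set. Combined with Step 2 it gives
$$\underline{\dim}_BF=\overline{\dim}_BF=\dim_HF=\sum_{k=1}^ms_k .$$

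The main obstacle is ensuring that only equality of Hausdorff and upper box dimension of the factors is needed, not the existence of box dimension of the intermediate products. The inductive chain above avoids this issue. The only external input is Moran's theorem for the coordinate IFSs, which is classical under the OSC.
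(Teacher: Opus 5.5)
Your proof is correct, and it departs from the paper's at the product step.

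The paper obtains $\dim_HF_k=\dim_BF_k=s_k$ from Corollary~\ref{corat} and Moran's theorem, exactly as you do. It then applies Theorem~\ref{sslm} directly to $\prod_{k=1}^mF_k$, and finally invokes Theorem~\ref{osc} to say that the product IFS satisfies the OSC.

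Two things are loose in that route:
\begin{itemize}
\item Theorem~\ref{sslm} is stated for only two self-similar factors. Iterating it for $m\ge 3$ would require the intermediate products $F_1\times\cdots\times F_j$ to be self-similar, which fails in general; this is precisely the point of the Note after Example~\ref{affine exam}.
\item The appeal to the OSC for the product IFS does no logical work. Its maps are not similarities, so no Moran-type formula is available for them.
\end{itemize}

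Your induction via Theorem~\ref{rotpset} replaces this with a cleaner observation: the property $\dim_H=\overline{\dim}_B$ passes to products, with additive value. That is exactly what is needed for arbitrary $m$. Your Step~3 then supplies the lower box dimension, which the paper leaves implicit.

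The paper's route is shorter, and for $m=2$ it is complete as it stands. Yours gives a full argument for every $m$ and uses the OSC only where it matters: to identify the common value on each factor as $s_k$.
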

	\begin{proof}
		By the Corollary~\ref{corat}, $F_k$ are the attractors of the IFSs $\left\{(X_k;w_{ki_k}):i_k=1,2,\ldots,N_k\right\}$ for $k=1,2,\ldots,m.$ Also, it is given that the IFSs satisfies the OSC. Hence, together it follows (see \cite{FG,FE})$$\dim_HF_K=\dim_BF_k=s_k.$$
		Since $F_k$ are self-similar sets, so, by Theorem \ref{sslm}, we have $$\dim_H\prod_{k=1}^mF_k=\dim_B\prod_{k=1}^mF_k=\sum_{k=1}^m\dim_HF_k=\sum_{k=1}^m\dim_BF_k=\sum_{k=1}^ms_k.$$
		Now by Theorem~\ref{osc}, the product IFS satisfy the OSC.
		Hence $$\dim_HF=\dim_BF=\dim_B\prod_{k=1}^mF_k=\sum_{k=1}^{m}s_k.$$  	
	\end{proof}
	\begin{exm}
		Let, $X_1=[0,1]$ and $X=X_1\times X_1$. And let, the maps $f_i:X_1\rightarrow X_1$ for $i=1,2,$ are given by
		\begin{align*}
			f_1(x)=\frac{x}{3},\quad
			f_2(x)=\frac{x}{3}+\frac{2}{3}.	
		\end{align*}
		Then middle third Cantor set $F$ (say) is the attractor of the IFS $\{(X_1;f_i):i=1,2\}$ and $F\times F$ is the attractor of the product IFS $\{(X;(f_i\times f_j)):i,j=1,2\},$ see  Figure~\ref{fig2}. In this case, $\dim_H(F\times F)=\dim_B(F\times F)=2\dim_BF=2\log 2/\log 3.$
		\begin{figure}[!ht]
			\centering
			\includegraphics[angle=0,height=0.4\textheight, width=0.5\textwidth]{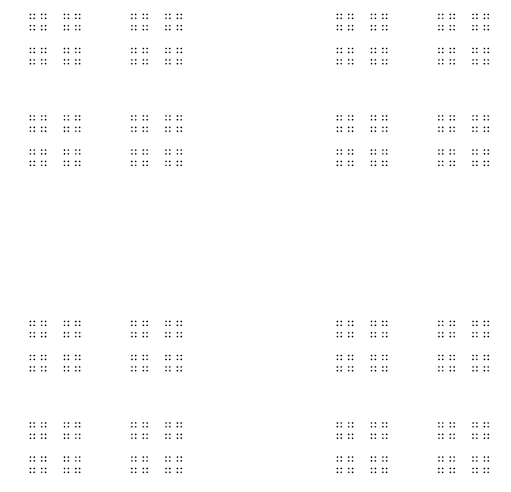}
			\caption{The product $F\times F$, where $F$ is the middle third Cantor set.} \label{fig2}
		\end{figure}
	\end{exm}
	
	\section{Inhomogeneous attractor on the product spaces and dimension bounds}\label{secforina}
	
	Let $\left\{(X_k;w_{ki_k}):i_k=1,2,\ldots,N_k\right\}$ for $k=1,2,\ldots,m$, be the IFSs with contractivity factors $c_{ki_k}<1,$ of $w_{ki_k}$, and let $C_k\in \mathscr{H}(X_k)$. Define $w_{k0}:\mathscr{H}(X_k)\rightarrow\mathscr{H}(X_k)$ be such that $w_{k0}(B)=C_k$ for all $B\in\mathscr{H}(X_k)$. Then $\left\{(X_k;w_{ki_k}):i_k=0,1,2,\ldots,N_k\right\}$ are the hyperbolic IFSs with contractivity factors $c_{ki_k}<1,$ with condensation set $C_k$.\\
	By Theorem~\ref{inhomconstrec} the transformation $W_{\theta k}:\mathscr{H}(X_k)\rightarrow\mathscr{H}(X_k)$,  given by $W_{\theta k}(B)=\bigcup_{i_k=0}^{N_k}w_{ki_k}(B)$ has a unique inhomogeneous attractor $F_{C_k}$. That is  $W_{\theta k}(F_{C_k})=F_{C_k}$. Also, for $k=1,2,\ldots,m$, each $F_{C_k}$ can be written as
	$$F_{C_k}=O_k\cup F_k,$$
	where $F_k$ is the corresponding homogeneous attractor and the orbital set $O_k$ is given by $O_k=C_k\cup\bigcup_{\textbf{j}_k\in J_k^*} w_{k\textbf{j}_k}(C_k)$, where $J_k^*=\bigcup_{n=1}^{\infty}J_k^n$, $J_k=\{1,2,\ldots,N_k\}$ and $w_{k\textbf{j}_k}=w_{kj_1}w_{kj_2}\cdots w_{kj_n}$ if $\textbf{j}_k=j_1j_2\ldots j_n\in J_k^n$.\\
	Let $X=\prod_{k=1}^mX_k$, and $\left\{(X,T_{i_1i_2\ldots i_m}):1\leq i_k\leq N_k~\mbox{for}~k=1,2,\ldots,m\right\}$ be the corresponding product IFS. Let $C=\prod_{k=1}^mC_k$. Now, we define the set valued transformations $T_{i_1i_2\ldots i_m}:\mathscr{H}\rightarrow\mathscr{H}$ such that for all $B=\prod_{k=1}^mB_k\in\mathscr{H}$
	$$T_{i_1i_2\ldots i_m}(B)=\prod_{k=1}^mw_{ki_k}(B_k),$$
	where at least one $i_p=0$, for some $1\leq p\leq m$. If $i_p=0$, for all $p=1,2,\ldots,m$, then $T_{00\ldots 0}(B)=\prod_{k=1}^mC_K=C$. Otherwise, $T_{i_1i_2\ldots i_m}(B)$ is combination of the product of some $C_k$ and $W_{ki_k}(B_k)$. Let $T_{\theta}:\mathscr{H}\rightarrow\mathscr{H}$ be defined by
	$$T_{\theta}(B)=\bigcup_{i_1=0}^{N_1}\bigcup_{i_2=0}^{N_2}\cdots\bigcup_{i_m=0}^{N_m}T_{i_1i_2\ldots i_m}(B)\quad \mbox{for all}~B\in\mathscr{H}.$$
	Then by Theorem~\ref{rbho}, we have $T_{\theta}(B)=\prod_{k=1}^mW_{\theta k}(B_k)$ for all $B=\prod_{k=1}^mB_K\in\mathscr{H}.$ Now, we get the following result for inhomogeneous cases.
	\begin{lm}
		If $\left\{(X_k;w_{ki_k}):i_k=0,1,2,\ldots,N_k\right\}$ are the hyperbolic IFSs with condensation set $C_k$, then $\left\{(X,T_{i_1i_2\ldots i_m}):0\leq i_k\leq N_k~\mbox{for}~k=1,2,\ldots,m\right\}$ is also the hyperbolic IFS with condensation set $C=\prod_{k=1}^mC_K$. Also, $T_{\theta}$ is contractive with respect to the product metric $H_0$.
	\end{lm}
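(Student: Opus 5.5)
The plan has two parts: first check that the enlarged family of maps is a hyperbolic IFS with condensation set $C=\prod_{k=1}^mC_k$, then establish contractivity of $T_\theta$ with respect to $H_0$ by reducing it to the coordinate operators $W_{\theta k}$.

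\textbf{Hyperbolicity.} The maps $T_{i_1i_2\ldots i_m}$ with all $i_k\geq 1$ are already contractions on $(X,d_0)$ with factor $c_{i_1i_2\ldots i_m}=\max_k c_{ki_k}<1$, as computed before Example~\ref{expifs}. The index $T_{00\ldots0}$ is the constant transformation $B\mapsto C$. Since each $C_k\in\mathscr{H}(X_k)$, the set $C$ is nonempty and compact, so $C\in\mathscr{H}$. Hence $T_{00\ldots0}$ is a condensation transformation with contractivity factor $0$.

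\textbf{Mixed indices.} For the remaining indices, where some but not all $i_p=0$, I would treat $T_{i_1\ldots i_m}$ as a set map on $\mathscr{H}$. In coordinates with $i_k=0$ it replaces $B_k$ by the constant $C_k$, which has factor $0$. In the other coordinates it applies $w_{ki_k}$, which has factor $c_{ki_k}$ on $(\mathscr{H}(X_k),H_k)$. Using the definition (\ref{phm}) of $H_0$, for $A,B\in\mathscr{H}$ we get
\[
H_0\bigl(T_{i_1\ldots i_m}(A),T_{i_1\ldots i_m}(B)\bigr)\le \max_{k:\,i_k\ne0}c_{ki_k}\,H_0(A,B).
\]
So every such map is a contraction on $(\mathscr{H},H_0)$ with factor $<1$. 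Together these maps form the product hyperbolic IFS, with condensation set $C$.

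\textbf{Contractivity of $T_\theta$.} I would use the identity $T_\theta(B)=\prod_{k=1}^mW_{\theta k}(B_k)$, obtained from Lemma~\ref{pl} exactly as in Theorem~\ref{rbho}. By Theorem~\ref{inhomconstrec}, each $W_{\theta k}$ is a contraction on $(\mathscr{H}(X_k),H_k)$ with factor $r_k=\max_{i_k}c_{ki_k}<1$; the condensation map contributes factor $0$. Then, as in Theorem~\ref{conofpho},
\[
H_0(T_\theta(A),T_\theta(B))=\Bigl\{\sum_{k=1}^m H_k\bigl(W_{\theta k}(A_k),W_{\theta k}(B_k)\bigr)^2\Bigr\}^{1/2}\le r\,H_0(A,B),
\]
where $r=\max_k r_k<1$.

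\textbf{Expected difficulty.} The only delicate point is bookkeeping. The mixed-index maps are not point maps on $X$, so "hyperbolic IFS with condensation" must be read at the level of set maps on $\mathscr{H}$. I will also remark that $T_\theta$ maps $\mathscr{H}$ into $\mathscr{H}$ because each $W_{\theta k}(B_k)$ lies in $\mathscr{H}(X_k)$. Finally, since $(\mathscr{H},H_0)$ is complete, $T_\theta$ has a unique fixed point there, which serves as the product inhomogeneous attractor.
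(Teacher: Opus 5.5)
Your proof is correct and follows the paper's route. The paper's proof only says the argument is the same as in Theorem~\ref{conofpho}: one uses $T_\theta(B)=\prod_{k=1}^m W_{\theta k}(B_k)$ and bounds $H_0(T_\theta(A),T_\theta(B))$ coordinatewise by $\max_k r_k\,H_0(A,B)$, exactly as you do. Your additional check that each mixed-index set map is an $H_0$-contraction is a harmless refinement, since it makes explicit the set-map reading of ``hyperbolic IFS with condensation'' that the paper leaves implicit.
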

	\begin{proof}
		The proof is similar to the proof of  Theorem~\ref{conofpho}.
	\end{proof}
	Therefore, the product IFS $\left\{(X,T_{i_1i_2\ldots i_m}):0\leq i_k\leq N_k~\mbox{for}~k=1,2,\ldots,m\right\}$ has an attractor $F_C$ and we call it as  \textbf{inhomogeneous product attractor}.
	\begin{thm}
		If $F_C$ is the inhomogeneous product attractor of $T_{\theta}$, then $F_C=\prod_{k=1}^mF_{C_k}$. In particular, $F_C=\prod_{k=1}^m(O_k\cup F_{k})$.
		\label{rbiatt}
	\end{thm}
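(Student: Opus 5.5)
The plan is to repeat the argument of Theorem~\ref{rbtaoifs} with $T_0$ replaced by $T_{\theta}$ and $W_k$ replaced by $W_{\theta k}$. The key input is the factorization noted just before the preceding lemma:
\[
T_{\theta}(B)=\prod_{k=1}^m W_{\theta k}(B_k)\quad\text{for all } B=\prod_{k=1}^m B_k\in\mathscr{H}.
\]
This comes from Lemma~\ref{pl} applied with index sets $\{0,1,\ldots,N_k\}$, where the $0$-th piece in coordinate $k$ is $C_k$.

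\textbf{Step 1 (the product set is a fixed point).} By Theorem~\ref{inhomconstrec}, each $W_{\theta k}$ has a unique fixed point $F_{C_k}\in\mathscr{H}(X_k)$. Set $P=\prod_{k=1}^m F_{C_k}$. Then $P\in\mathscr{H}$, and the factorization gives
\[
T_{\theta}(P)=\prod_{k=1}^m W_{\theta k}(F_{C_k})=\prod_{k=1}^m F_{C_k}=P.
\]

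\textbf{Step 2 (uniqueness).} By the preceding lemma, $T_{\theta}$ is a contraction on the complete space $(\mathscr{H},H_0)$. By Theorem~\ref{fpt} its fixed point is therefore unique, so $F_C=P$.

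As an alternative route for Step 2, one can write $F_C=\prod_k E_k$ and apply the projection $\pi_l$ to $T_{\theta}(F_C)=F_C$. This gives $W_{\theta l}(E_l)=E_l$, hence $E_l=F_{C_l}$ by uniqueness in $\mathscr{H}(X_l)$. The projection step needs every other factor $W_{\theta k}(E_k)$ to be nonempty, which holds because these sets are nonempty compact sets.

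\textbf{Step 3 (the ``in particular'' part).} Lemma~\ref{Rbhia}, applied to each coordinate IFS with condensation, gives $F_{C_k}=O_k\cup F_k$. Substituting this into $F_C=\prod_k F_{C_k}$ yields $F_C=\prod_{k=1}^m(O_k\cup F_k)$.

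\textbf{Main obstacle.} The hard part is making the factorization $T_{\theta}=\prod_k W_{\theta k}$ on $\mathscr{H}$ rigorous. Each mixed map $T_{i_1\ldots i_m}$ with some $i_p=0$ is defined only on $\mathscr{H}$, as the product of the coordinate pieces $w_{ki_k}(B_k)$, where $w_{k0}(B_k)=C_k$. I would check carefully that Lemma~\ref{pl} applies verbatim once the union in each coordinate runs over $i_k=0,\ldots,N_k$.

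I would also verify that the contraction claim for $T_{\theta}$ holds with respect to $H_0$, using $H_k(W_{\theta k}(A_k),W_{\theta k}(B_k))\le r_k H_k(A_k,B_k)$ from Theorem~\ref{inhomconstrec}. Then the same estimate as in Theorem~\ref{conofpho} gives the constant $\max_k r_k<1$. This supplies the uniqueness used in Step 2.
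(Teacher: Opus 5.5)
Your proof is correct and is essentially the paper's argument. The paper only says the proof is similar to that of Theorem~\ref{rbtaoifs}: write $F_C=\prod_k E_k$, use the factorization $T_\theta(B)=\prod_k W_{\theta k}(B_k)$, project to get $W_{\theta l}(E_l)=E_l$, and invoke uniqueness in each $\mathscr{H}(X_l)$ (your ``alternative route''), with Lemma~\ref{Rbhia} giving the ``in particular'' part. Your main Step~2 instead uses uniqueness of the fixed point of $T_\theta$ on $(\mathscr{H},H_0)$, which is equally valid given the contraction lemma; your Step~1 combined with the projection argument even yields existence and uniqueness in $\mathscr{H}$ without that contraction estimate.
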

	\begin{proof}
		The proof is similar to the proof of  Theorem~\ref{rbtaoifs}.
	\end{proof}
	\begin{exm}
		Let $X_1=X_2=[0,1]$ and $X=X_1\times X_2$. Consider the maps $w_{ij}:X_i\rightarrow X_i$ for $j=1,2,$ are given by
		\begin{align*}
			w_{11}(x)=&w_{21}(x)=\frac{x}{3}, \\
			w_{12}(x)=&w_{22}(x)=\frac{x}{3}+\frac{2}{3}.	
		\end{align*}
		Then middle third Cantor set $F_i$ are the attractor of the IFS $\{(X_i;w_{ij}):j=1,2\}$ for $i=1,2$, and $F_1\times F_2$ is the attractor of the product IFS $\{(X;T_{j_1j_2}=w_{1j_1}\times w_{2j_2}):j_1,j_2\in\{1,2\}\}$, which is a product of two Cantor sets (see  Figure~\ref{fig2}). Let $C_1=\{0\}\times[\frac{4}{9},\frac{5}{9}]$ and $C_2=[\frac{4}{9},\frac{5}{9}]\times \{0\}$. Now, we consider a set valued function $w_{i0}:\mathscr{H}(X_i)\rightarrow\mathscr{H}(X_i)$ such that $w_{i0}(B)=C_i$ for all $B\in\mathscr{H}(X_i)$. Let $F_{C_i}$ be the inhomogeneous attractor of the IFS $\{(X_i;w_{ij}):j=0,1,2\}$ for $i=1,2$. Then the inhomogeneous product attractor $F_C$ of the IFS $\{(X;T_{j_1j_2}=w_{1j_1}\times w_{2j_2}):j_1,j_2\in\{0,1,2\}\}$ is given by $F_C=F_{C_1}\times F_{C_2}$ (see Figure~\ref{fig4}).
	\end{exm}
	\begin{figure}[!ht]
		\centering
		\includegraphics[angle=0, width=0.9\textwidth]{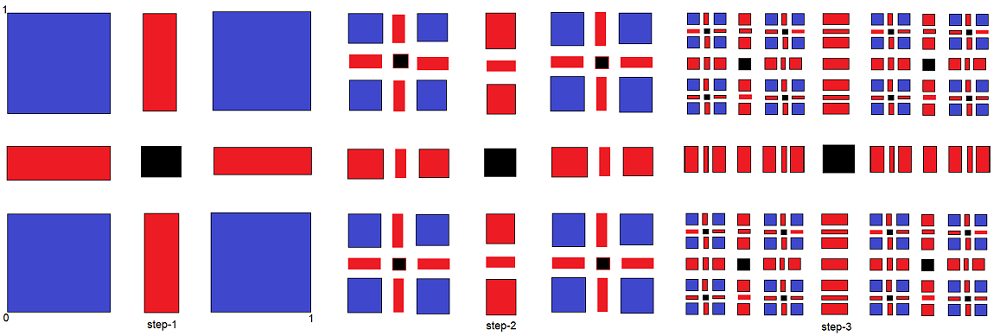}
		\caption{First three levels in the construction of inhomogeneous product attractor with condensation set.} \label{fig4}
	\end{figure}
	
	Here, $T_{00}(X)=[\frac{4}{9},\frac{5}{9}]\times[\frac{4}{9},\frac{5}{9}]$, $T_{01}(X)=[\frac{4}{9},\frac{5}{9}]\times[0,\frac{1}{3}]$, $T_{02}(X)=[\frac{4}{9},\frac{5}{9}]\times[\frac{2}{3},1]$, $T_{10}(X)=[0,\frac{1}{3}]\times[\frac{4}{9},\frac{5}{9}]$, $T_{20}(X)=[\frac{2}{3},1]\times[\frac{4}{9},\frac{5}{9}]$.
	
	\begin{lm}
		If the sets in the collection $\{w_{ki_k}(F_{C_k}):0\leq i_k\leq N_k\}$, are pairwise disjoint for $k=1,2,\ldots,m$, then the sets in the collection  $\{T_{i_1i_2\cdots i_m}(F_C):0\leq i_k\leq N_k~\mbox{for}~k=1,2,\ldots,m\}$, are pairwise disjoint. Converse is also true.	
		\label{tdict}
	\end{lm}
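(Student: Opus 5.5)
The plan is to reduce everything to the coordinatewise description of the maps. By Theorem~\ref{rbiatt} we have $F_C=\prod_{k=1}^mF_{C_k}$. By the definition of the set valued maps $T_{i_1i_2\ldots i_m}$ on $\mathscr{H}$, this gives
$$T_{i_1i_2\ldots i_m}(F_C)=\prod_{k=1}^m w_{ki_k}(F_{C_k})\quad\mbox{for all}~0\leq i_k\leq N_k,$$
where $w_{k0}(F_{C_k})=C_k$. Both directions then come from one elementary fact about Cartesian products. For nonempty sets $A_k,B_k\subseteq X_k$ we have
$$\Big(\prod_{k=1}^mA_k\Big)\cap\Big(\prod_{k=1}^mB_k\Big)=\prod_{k=1}^m(A_k\cap B_k),$$
and this product is empty if and only if $A_k\cap B_k=\emptyset$ for some $k$. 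All sets involved are nonempty, since $F_{C_k}$ and $C_k$ lie in $\mathscr{H}(X_k)$ and images of nonempty sets are nonempty.

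For the forward direction, take two distinct multi-indices $(i_1,\ldots,i_m)\neq(j_1,\ldots,j_m)$. Then there is some $p$ with $i_p\neq j_p$. By hypothesis $w_{pi_p}(F_{C_p})\cap w_{pj_p}(F_{C_p})=\emptyset$. The $p$-th factor of the intersection of the two product sets is therefore empty, so $T_{i_1\ldots i_m}(F_C)\cap T_{j_1\ldots j_m}(F_C)=\emptyset$.

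For the converse, fix $k$ and indices $i_k\neq j_k$ in $\{0,1,\ldots,N_k\}$. Choose the remaining indices equal, say $l_p=0$ for all $p\neq k$ (any fixed choice works). Consider the two multi-indices that agree off the $k$-th slot and carry $i_k$, respectively $j_k$, in the $k$-th slot. By hypothesis the corresponding images of $F_C$ are disjoint. Their intersection is the product of the nonempty sets $w_{pl_p}(F_{C_p})$ for $p\neq k$ with the set $w_{ki_k}(F_{C_k})\cap w_{kj_k}(F_{C_k})$ in the $k$-th slot. Hence that last factor must be empty. This is the same argument as in the converse part of Theorem~\ref{osc}.

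The only step needing care is the bookkeeping: the identity $T_{i_1\ldots i_m}(F_C)=\prod_k w_{ki_k}(F_{C_k})$ has to hold uniformly, including the indices where some $i_k=0$ and the $k$-th factor is the condensation set $C_k$. We also need the nonemptiness of every factor, which is what makes the converse work. Once Theorem~\ref{rbiatt} is invoked, I do not expect any genuine obstacle.
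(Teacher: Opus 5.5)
Your proposal is correct and follows essentially the same route as the paper, which only says that the lemma follows from the argument of Theorem~\ref{osc}: write $T_{i_1i_2\cdots i_m}(F_C)=\prod_{k=1}^m w_{ki_k}(F_{C_k})$ using Theorem~\ref{rbiatt}, then use that an intersection of products is the product of the coordinatewise intersections. Your explicit treatment of the nonemptiness of every factor, including $w_{k0}(F_{C_k})=C_k$, is exactly what the converse direction needs, and the paper leaves it implicit.
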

	\begin{proof}
		Proof  follows from the  observation of the proof of Theorem~\ref{osc}.
	\end{proof}
	The following  theorem provides a result on the dimension of the inhomogeneous product attractor.
	\begin{thm}
		For $k=1,2,\ldots,m$, let $F_{C_k}$ be the inhomogeneous attractors corresponding to the IFSs $\left\{(X_k;w_{ki_k}):i_k=0,1,2,\ldots,N_k\right\}$ with condensation set $C_k$ respectively, and $F_k$ be the corresponding homogeneous attractors. Let $F_C$ be the inhomogeneous attractor of the product IFS\\ $\left\{(X,T_{i_1i_2\ldots i_m}):0\leq i_k\leq N_k~\mbox{for}~k=1,2,\ldots,m\right\}$ with condensation set $C=\prod_{k=1}^mC_K$. Then we get the followings.
		\begin{enumerate}
			\item $\sum_{k=1}^{m}\max\{\dim_HF_k,\dim_HC_k\}\leq\dim_HF_C\leq\dim_PF_C\leq \sum_{k=1}^{m}\max\{\dim_PF_k,\dim_PC_k\}$.
			\item If the sets in the collection  $\{T_{i_1i_2\cdots i_m}(F_C):0\leq i_k\leq N_k~\mbox{for}~k=1,2,\ldots,m\}$, are pairwise disjoint, then we get  $$\overline{\dim}_BF_C\leq\sum_{k=1}^{m}\max\{\overline{\dim}_BF_k,\overline{\dim}_BC_k\}.$$
		\end{enumerate}
		\label{dotinha}
	\end{thm}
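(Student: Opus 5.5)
The plan is to reduce everything to the coordinate inhomogeneous attractors, using the product structure from Theorem~\ref{rbiatt}, and then combine two existing tools: the product-dimension inequalities of Theorem~\ref{rbarptset} (and Theorem~\ref{rotpset}) and the coordinatewise formulas of Theorem~\ref{doinhomoprsets}. By Theorem~\ref{rbiatt} we have $F_C=\prod_{k=1}^mF_{C_k}$, so it suffices to bound the dimensions of a finite Cartesian product in terms of the dimensions of its factors.

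For part (1), I will first extend Theorem~\ref{rbarptset} from two factors to $m$ factors by induction on $m$. Write $\prod_{k=1}^{m}F_{C_k}=\left(\prod_{k=1}^{m-1}F_{C_k}\right)\times F_{C_m}$ and note that the product metric $d_0$ on $m$ factors is the product metric of the $(m-1)$-fold product with the last factor. The chain in Theorem~\ref{rbarptset} gives
\[\dim_H\prod_{k=1}^{m}F_{C_k}\geq\sum_{k=1}^{m}\dim_HF_{C_k}\quad\mbox{and}\quad\dim_P\prod_{k=1}^{m}F_{C_k}\leq\sum_{k=1}^{m}\dim_PF_{C_k}.\]
The middle inequality $\dim_HF_C\leq\dim_PF_C$ also follows from that chain, or from the general fact $\dim_H\leq\dim_P$. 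Finally I substitute $\dim_HF_{C_k}=\max\{\dim_HF_k,\dim_HC_k\}$ and $\dim_PF_{C_k}=\max\{\dim_PF_k,\dim_PC_k\}$ from Theorem~\ref{doinhomoprsets}(1).

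For part (2), I will first use Lemma~\ref{tdict}. Its converse direction shows that pairwise disjointness of $\{T_{i_1\cdots i_m}(F_C)\}$ implies that, for each $k$, the sets $w_{ki_k}(F_{C_k})$ with $0\leq i_k\leq N_k$ are pairwise disjoint. Theorem~\ref{doinhomoprsets}(2) then gives $\overline{\dim}_BF_{C_k}=\max\{\overline{\dim}_BF_k,\overline{\dim}_BC_k\}$.

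It remains to prove $\overline{\dim}_B\prod_kE_k\leq\sum_k\overline{\dim}_BE_k$ for general metric spaces, since Theorem~\ref{rotpset} is stated only for Euclidean sets. I will argue directly by covers. If $N_r(E_k)$ balls of radius $r$ cover $E_k$, then the products of these balls cover $\prod_kE_k$ by sets of $d_0$-diameter at most $2\sqrt{m}\,r$. Hence $N_{2\sqrt{m}r}(\prod_kE_k)\leq\prod_kN_r(E_k)$. Taking logarithms, dividing by $-\log r$, and using $\limsup\sum\leq\sum\limsup$ gives the bound, because the constant $2\sqrt m$ does not affect the limit.

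The main obstacle is justifying these product-dimension inequalities for $m$ factors in the product metric $d_0$. In particular, applying Theorem~\ref{rbarptset} inductively requires checking that the metrics are compatible up to bi-Lipschitz equivalence. I would handle this by noting that bi-Lipschitz changes of metric preserve all three dimensions.
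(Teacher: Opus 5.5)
Your proposal is correct and follows the paper's proof: reduce to $F_C=\prod_{k}F_{C_k}$ via Theorem~\ref{rbiatt}, apply the product-dimension inequalities, substitute the coordinate formulas of Theorem~\ref{doinhomoprsets}, and, in part (2), use the converse direction of Lemma~\ref{tdict} to pass the disjointness hypothesis to each coordinate. Your induction on $m$ and your covering estimate $N_{2\sqrt{m}\,r}\bigl(\prod_k E_k\bigr)\leq\prod_k N_r(E_k)$ only make explicit steps the paper glosses over; in particular, the paper cites Theorem~\ref{rotpset} for the upper box dimension of a product, but that result is stated only for Euclidean sets, whereas your estimate holds in general metric spaces.
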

	
	\begin{proof}
		\begin{enumerate}
			\item From  Theorem~\ref{rbiatt}, we get $F_C=\prod_{k=1}^mF_{C_k}$. Then using Theorem~\ref{rotpset} and \ref{rbarptset} together it follows that $\sum_{k=1}^{m}\dim_HF_{C_k}\leq\dim_H\prod_{k=1}^mF_{C_k}=\dim_HF_C\leq\dim_PF_C\leq\sum_{k=1}^{m}\dim_PF_{C_k}$. Also, from Theorem~\ref{doinhomoprsets}, we see that  $\dim_HF_{C_k}=\max\{\dim_HF_{k},\dim_HC_k\}$ and $\dim_PF_{C_k}=\max\{\dim_PF_{k},\dim_PC_k\}$ . Therefore, $$\sum_{k=1}^{m}\max\{\dim_HF_k,\dim_HC_k\}\leq\dim_HF_C\leq\dim_PF_C\leq \sum_{k=1}^{m}\max\{\dim_PF_k,\dim_PC_k\}$$
			\item From Theorem~\ref{rotpset}, we get $\overline{\dim}_BF_C= \overline{\dim}_B\prod_{k=1}^mF_{C_k}\leq\sum_{k=1}^m\overline{\dim}_BF_{C_k}$. Since the sets in the collection $\{T_{i_1i_2\cdots i_m}(F_C):0\leq i_k\leq N_k~\mbox{for}~k=1,2,\ldots,m\}$, are pairwise disjoint, so, by Lemma~\ref{tdict}, the sets in the collection $\{w_{ki_k}(F_{C_k}):0\leq i_k\leq N_k\}$, are pairwise disjoint for $k=1,2,\ldots,m$. Hence from  Theorem~\ref{doinhomoprsets}, we get $\overline{\dim}_BF_{C_k}=\max\{\overline{\dim}_BF_k,\overline{\dim}_BC_k\}$. Therefore, $$\overline{\dim}_BF_C\leq\sum_{k=1}^{m}\max\{\overline{\dim}_BF_k,\overline{\dim}_BC_k\}$$
		\end{enumerate}
	\end{proof}
	
	\section{Construction of fractal function on the product space}\label{secpfs}
	
	Let $\{(x_{k,i_k},y_{k,i_k})\in\mathbb{R}^2:\myseries{0}{i_k}{N_k}\}$ be the prescribed data set in $\mathbb{R}^2$ such that $\Delta_k:$ $x_{k,0}<x_{k,1}<\cdots<x_{k,N_k}$ be a partition of $I_k=[x_{k,0},x_{k,N_k}]$ for $\myseries{1}{k}{m}$. Let $I_{k,i_k}=[x_{k,i_k-1},x_{k,i_k}]$ for $\myseries{1}{i_k}{N_k}$; $\myseries{1}{k}{m}$. Suppose $L_{k,i_k}:I_k\rightarrow I_{k,i_k}$ are contraction homeomorphisms such that
	\begin{equation}
		L_{k,i_k}(x_{k,0})=x_{k,i_k-1},~L_{k,i_k}(x_{k,N_k})=x_{k,i_k}, \label{equ8}
	\end{equation}
	\begin{equation}
		|L_{k,i_k}(x)-L_{k,i_k}(x')|\leq c_{k,i_k}|x-x'|\quad\mbox{for all}~ x,x'\in I_k, \label{equ9}
	\end{equation}
	where $0\leq c_{k,i_k}<1,$ for $\myseries{1}{i_k}{N_k};~\myseries{1}{k}{m}.$
	Further, assume that $F_{k,i_k}:I_k\times\mathbb{R}\rightarrow\mathbb{R}$ are continuous maps satisfying
	$$F_{k,i_k}(x_{k,0},y_{k,0})=y_{k,i_k-1},~F_{k,i_k}(x_{k,N_k},y_{k,N_k})=y_{k,i_k},$$
	$$|F_{k,i_k}(x,y)-F_{k,i_k}(x',y)|\leq |a_{k,i_k}||x-x'|\quad\mbox{for all}~ x,x'\in I_k,$$
	$$|F_{k,i_k}(x,y)-F_{k,i_k}(x,y')|\leq |b_{k,i_k}||y-y'|\quad\mbox{for all}~ y,y'\in\mathbb{R},$$
	for some $a_{k,i_k},b_{k,i_k}\in (-1,1)$, for $\myseries{1}{i_k}{N_k}$; $\myseries{1}{k}{m}$.
	Now, from Theorem~\ref{ftoff}, the IFSs $\{(I_k\times\mathbb{R};W_{k,i_k}):\myseries{1}{i_k}{N_k}\}$, where $W_{k,i_k}:I_k\times\mathbb{R}\rightarrow I_{k,i_k}\times\mathbb{R}$ are defined by $$W_{k,i_k}(x,y)=(L_{k,i_k}(x),F_{k,i_k}(x,y)),$$
	has an attractor $G(g_k)$ for $\myseries{1}{k}{m}$, which are the graph of the continuous functions $\myfunction{g_k}{I_k}{\mathbb{R}}$ satisfying $g_k(x_{k,i_k})=y_{k,i_k}$ for all $\myseries{0}{i_k}{N_k}$; $\myseries{1}{k}{m}$, where $g_k$ are the fixed point of the corresponding RB-operators $T_k:C_{y_{k,0},y_{k,N_k}}[I_k]\rightarrow C_{y_{k,0},y_{k,N_k}}[I_k]$ such that
	\begin{equation}
		(T_kf)(x)=F_{k,i_k}(L_{k,i_k}^{-1}(x),f(L_{k,i_k}^{-1}(x)))\quad\mbox{for}~x\in I_{k,i_k};~\myseries{1}{i_k}{N_k}. \label{prodrbop}
	\end{equation}  
	To simplify the notation, for $n\in\mathbb{N}$, we write 
	$$\mathbb{N}_n=\{1,2,\ldots,n\},\quad\mathbb{N}_{n,0}=\{0,1,2,\ldots,n\},\quad\partial\mathbb{N}_{n,0}=\{0,n\}.$$ 
	Let $\sigma:\mathbb{Z}\times\{0,N_1,N_2,\ldots,N_m\}\rightarrow\mathbb{Z}$ be defined by 
	\begin{equation}
		\left\{ \begin{array}{ll}
			\sigma(i,j)=i-1 & \textrm{if $j=0$};\\
			\sigma(i,j)=i & \textrm{if $j\neq 0$}.
		\end{array} \right.
	\end{equation}
	Using the above notation, we see that 
	\begin{equation}
		L_{k,i_k}(x_{k,j_k})=x_{k,\sigma(i_k,j_k)},\quad F_{k,i_k}(x_{k,j_k},y_{k,j_k})=y_{k,\sigma(i_k,j_k)}
		\label{contpoint}
	\end{equation}
	for all $i_k\in\mathbb{N}_{N_k},~j_k\in \partial\mathbb{N}_{N_k,0},~ k\in\mathbb{N}_m.$\\
	
	Now, define  the product contraction homeomorphisms $\prodifs{L}:I\rightarrow\prodifs{I}$ as
	\begin{equation}
		\prodifs{L}(\vec{x})=(L_{1,i_1}(x_1),L_{2,i_2}(x_2),\ldots,L_{m,i_m}(x_m))\quad\mbox{for all}~ \myvec{\vec{x}}{x}{m}\in I, \label{procontrac}
	\end{equation}
	where $\myproduct{I}{I_k}$ and $\prodifs{I}=\prod_{k=1}^{m}I_{k,i_k}$ for $\myseries{1}{i_k}{N_k}$; $\myseries{1}{k}{m}$.
	Then from (\ref{contpoint}), we see that 
	\begin{equation}
		\prodifs{L}(\prodpoint{x}{j})=(x_{1,\sigma(i_1,j_1)},x_{2,\sigma(i_2,j_2)},\ldots,x_{m,\sigma(i_m,j_m)})
		\label{prodofL}
	\end{equation}
	for all $i_k\in \mathbb{N}_{N_k},~j_k\in \partial\mathbb{N}_{N_k,0},~ k\in\mathbb{N}_m$ and 
	\begin{equation}
		\lVert \prodifs{L}(\vec{x})-\prodifs{L}(\vec{x'})\rVert_2\leq |\prodifs{c}|_{\infty}\lVert \vec{x}-\vec{x'}\rVert_2,
		\label{ccoflt}
	\end{equation}
	where $|\prodifs{c}|_{\infty}=\max\{c_{k,i_k}:\myseries{1}{i_k}{N_k};~\myseries{1}{k}{m}\}<1$ and $\lVert\cdot\rVert_2$ denote the Euclid norm on $\mathbb{R}^m$.\\
	
	Further, the maps $\prodifs{F}:I\times\mathbb{R}^m\rightarrow\mathbb{R}^m$ are defined by 
	\begin{equation}
		\prodifs{F}(\vec{x},\vec{y})=(F_{1,i_1}(x_1,y_1),F_{2,i_2}(x_2,y_2),\ldots,F_{m,i_m}(x_m,y_m)) \label{prod of F}
	\end{equation}
	for $\myvec{\vec{x}}{x}{m}\in I,~\myvec{\vec{y}}{y}{m}\in\mathbb{R}^m.$ Hence from (\ref{contpoint}), we get 
	\begin{equation}
		\prodifs{F}((\prodpoint{x}{j}),(\prodpoint{y}{j}))=(y_{1,\sigma(i_1,j_1)},y_{2,\sigma(i_2,j_2)},\ldots,y_{m,\sigma(i_m,j_m)})
		\label{prodofF}
	\end{equation}
	for all $i_k\in \mathbb{N}_{N_k},~j_k\in \partial\mathbb{N}_{N_k,0},~ k\in\mathbb{N}_m$ and 
	\begin{equation}
		\left. \begin{array}{ll}
			\lVert \prodifs{F}(\vec{x},\vec{y})-\prodifs{F}(\vec{x'},\vec{y})\rVert_2\leq |\prodifs{a}|_{\infty}\lVert \vec{x}-\vec{x'}\rVert_2,\quad\mbox{for all}~ x.x'\in I\\
			\lVert \prodifs{F}(\vec{x},\vec{y})-\prodifs{F}(\vec{x},\vec{y'})\rVert_2\leq |\prodifs{b}|_{\infty}\lVert \vec{y}-\vec{y'}\rVert_2,\quad\mbox{for all}~ y,y'\in\mathbb{R}^m.
		\end{array} \right. 
		\label{cont.of F}
	\end{equation}
	where $|\prodifs{a}|_{\infty}=\max\{|a_{k,i_k}|:\myseries{1}{i_k}{N_k};~\myseries{1}{k}{m}\}<1$ and  $|\prodifs{b}|_{\infty}=\max\{|b_{k,i_k}|:\myseries{1}{i_k}{N_k};~\myseries{1}{k}{m}\}<1$.\\
	Finally, the maps $\prodifs{W}:I\times\mathbb{R}^m\rightarrow\prodifs{I}\times\mathbb{R}^m$ are defined by 
	
	\begin{equation}
		\prodifs{W}(\vec{x},\vec{y})=(\prodifs{L}(\vec{x}),\prodifs{F}(\vec{x},\vec{y}))\quad\mbox{for}~\myseries{1}{i_k}{N_k};~\myseries{1}{k}{m}.
	\end{equation}
	Then from (\ref{prodofL}) and (\ref{prodofF}), we have 
	\begin{equation}
		\prodifs{W}\left(\left(\prod_{k=1}^{m}x_{k,j_k}\right),\left(\prod_{k=1}^{m}y_{k,j_k}\right)\right)=\left(\left(\prod_{k=1}^{m}x_{k,\sigma(i_k,j_k)}\right),\left(\prod_{k=1}^{m}y_{k,\sigma(i_k,j_k)}\right)\right)
	\end{equation}
	for all $i_k\in \mathbb{N}_{N_k},~j_k\in \partial\mathbb{N}_{N_k,0},~ k\in\mathbb{N}_m$. This shows that the maps $\prodifs{W}$ for $\myseries{1}{i_k}{N_k};~\myseries{1}{k}{m}$, satisfies the join up conditions. Now, we see that the IFS
	\begin{equation}\label{noforpdifew}
		\{(I\times\mathbb{R}^m:\prodifs{W}):~\myseries{1}{i_k}{N_k};~\myseries{1}{k}{m}\}
	\end{equation}  is contractive. Let $\theta$ be a positive real number. We define a new norm $\norm{\cdot}{\theta}$ on $I\times\mathbb{R}^m$, which is equivalent to the norm $\lVert(\vec{x},\vec{y})\rVert=\norm{\vec{x}}{2}+\norm{\vec{y}}{2}$ on $I\times\mathbb{R}^m$, as follows:
	$$\norm{(\vec{x},\vec{y})}{\theta}=\norm{\vec{x}}{2}+\theta\norm{\vec{y}}{2}\quad\mbox{for all}~ (\vec{x},\vec{y})\in I\times\mathbb{R}^m.$$
	Then, we get the following.
	\begin{lm}
		If $\theta=\frac{\min\{(1-|\prodifs{c}|_{\infty}):~\myseries{1}{i_k}{N_k};\myseries{1}{k}{m}\}}{\max\{|\prodifs{a}|_{\infty}:~\myseries{1}{i_k}{N_k};\myseries{1}{k}{m}\}}$, $a=\max\{|\prodifs{c}|_{\infty}+\theta|\prodifs{a}|_{\infty}:~\myseries{1}{i_k}{N_k};\myseries{1}{k}{m}\}$ and $b=\max\{|\prodifs{b}|_{\infty}:~\myseries{1}{i_k}{N_k};\myseries{1}{k}{m}\}$, then the IFS $\{(I\times\mathbb{R}^m;\prodifs{W}):~\myseries{1}{i_k}{N_k};\myseries{1}{k}{m}\}$ defined above, associated with the data set $\{((\prodpoint{x}{i}),(\prodpoint{y}{i}))\in I\times\mathbb{R}^m:~\myseries{1}{i_k}{N_k};\myseries{1}{k}{m}\}$ is hyperbolic with respect to the norm $\norm{\cdot}{\theta}$ and with contractivity factor $s=\max\{a,b\}$.
	\end{lm}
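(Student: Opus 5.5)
This is the direct analogue of Lemma~\ref{contlm} in the product setting, and I would follow the same argument with the one-dimensional estimates replaced by (\ref{ccoflt}) and (\ref{cont.of F}). Write $c=|\prodifs{c}|_{\infty}$, $\alpha=|\prodifs{a}|_{\infty}$, $\beta=|\prodifs{b}|_{\infty}$ for a fixed multi-index $i_1i_2\cdots i_m$. The maximum over the index set of these quantities coincides with the maxima already defining them. Also note $\theta>0$ is well defined, assuming $\max\alpha>0$; if all $a_{k,i_k}=0$, any $\theta>0$ works. Fix $(\vec{x},\vec{y}),(\vec{x'},\vec{y'})\in I\times\mathbb{R}^m$.

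First I would estimate the $\vec{x}$-part: by (\ref{ccoflt}),
$\norm{\prodifs{L}(\vec{x})-\prodifs{L}(\vec{x'})}{2}\leq c\norm{\vec{x}-\vec{x'}}{2}$.
Next, for the $\vec{y}$-part I insert the intermediate point $\prodifs{F}(\vec{x'},\vec{y})$ and apply the triangle inequality together with both lines of (\ref{cont.of F}):
$$\norm{\prodifs{F}(\vec{x},\vec{y})-\prodifs{F}(\vec{x'},\vec{y'})}{2}\leq \alpha\norm{\vec{x}-\vec{x'}}{2}+\beta\norm{\vec{y}-\vec{y'}}{2}.$$
Combining these in the norm $\norm{\cdot}{\theta}$ gives
$$\norm{\prodifs{W}(\vec{x},\vec{y})-\prodifs{W}(\vec{x'},\vec{y'})}{\theta}\leq (c+\theta\alpha)\norm{\vec{x}-\vec{x'}}{2}+\theta\beta\norm{\vec{y}-\vec{y'}}{2}\leq a\norm{\vec{x}-\vec{x'}}{2}+b\,\theta\norm{\vec{y}-\vec{y'}}{2},$$
which is at most $s\norm{(\vec{x},\vec{y})-(\vec{x'},\vec{y'})}{\theta}$ with $s=\max\{a,b\}$.

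It remains to check $s<1$. We have $b<1$ because every $|b_{k,i_k}|<1$. For $a$, the choice of $\theta$ gives $\theta\alpha\leq\theta\max\alpha=\min(1-c)\leq 1-c$, so $c+\theta\alpha\leq 1$.

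The main obstacle is this last inequality: it gives only $a\leq1$, not $a<1$. As in the source of Lemma~\ref{contlm}, the stated $\theta$ is borderline. I would fix this by noting that $\norm{\cdot}{\theta}$ is equivalent to $\norm{\cdot}{}$ for every $\theta>0$, and working with any smaller $\theta$, for instance half the stated value. Then $c+\theta\alpha\leq c+\tfrac12(1-c)<1$, so $a<1$ strictly and the contractivity claim holds. Finally, completeness of $I\times\mathbb{R}^m$ under $\norm{\cdot}{\theta}$ holds since it is a closed subset of $\mathbb{R}^{2m}$. So the IFS is hyperbolic, and together with the join-up conditions already verified this completes the plan.
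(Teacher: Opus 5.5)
Your estimate chain is the paper's own. You apply (\ref{ccoflt}) to the $\vec{x}$-part, insert $F_{i_1i_2\cdots i_m}(\vec{x'},\vec{y})$ and use both lines of (\ref{cont.of F}) on the $\vec{y}$-part, and bound by $a\lVert\vec{x}-\vec{x'}\rVert_2+\theta b\lVert\vec{y}-\vec{y'}\rVert_2\leq s\lVert(\vec{x},\vec{y})-(\vec{x'},\vec{y'})\rVert_\theta$. You part ways with the paper only at the last step, and there you are right and the paper is not. The paper asserts that the given $\theta$ yields the strict inequality $\theta<(1-|c_{i_1\cdots i_m}|_\infty)/|a_{i_1\cdots i_m}|_\infty$, but this does not follow from the definition of $\theta$.

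In fact the problem is worse than borderline. As you observed, $|c_{i_1\cdots i_m}|_\infty$ and $|a_{i_1\cdots i_m}|_\infty$ are defined as maxima over all $k$ and $i_k$. So the min and max in $\theta$ run over a single value, $\theta$ equals that quotient exactly, and $a=1$, giving $s=\max\{1,b\}=1$. The lemma as stated is therefore false, not merely its proof.

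Here is a concrete counterexample. In every coordinate take $x_{k,i}=y_{k,i}=i$ for $i=0,1,2$, with $L_{k,1}(x)=F_{k,1}(x,y)=x/2$ and $L_{k,2}(x)=F_{k,2}(x,y)=x/2+1$. All hypotheses hold with $c_{k,i}=a_{k,i}=1/2$ and $b_{k,i}=0$, so $\theta=1$. For every multi-index and $\vec{y}=\vec{y'}$ one gets $\lVert W_{i_1i_2\cdots i_m}(\vec{x},\vec{y})-W_{i_1i_2\cdots i_m}(\vec{x'},\vec{y})\rVert_\theta=\lVert\vec{x}-\vec{x'}\rVert_2=\lVert(\vec{x},\vec{y})-(\vec{x'},\vec{y})\rVert_\theta$. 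Hence no map is a strict contraction.

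Your repair of shrinking $\theta$ is the correct one. Halving it gives $a\leq(1+|c_{i_1\cdots i_m}|_\infty)/2<1$; this is the factor $2$ in the denominator of the usual choice, which Lemma~\ref{contlm} also drops. With this change your proof is complete. State explicitly that you are proving this corrected lemma, with $a$ and $s$ recomputed from the new $\theta$, rather than the statement as written.
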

	\begin{proof}
		Proof is similar to  Lemma~\ref{contlm}. But for the completeness, we prove it here. Let $\myvec{\vec{x}}{x}{m},\myvec{\vec{x'}}{x'}{m}\in I$ and $\myvec{\vec{y}}{y}{m},\myvec{\vec{y'}}{y'}{m}\in\mathbb{R}^m$. Then
		\begin{align*}
			&\norm{\prodifs{W}(\vec{x},\vec{y})-\prodifs{W}(\vec{x'},\vec{y'})}{\theta}\\
			&=\norm{(\prodifs{L}(\vec{x}),\prodifs{F}(\vec{x},\vec{y}))-(\prodifs{L}(\vec{x'}),\prodifs{F}(\vec{x'},\vec{y'}))}{\theta}\\
			&=\norm{\prodifs{L}(\vec{x})-\prodifs{L}(\vec{x'})}{2}+\theta\norm{\prodifs{F}(\vec{x},\vec{y})-\prodifs{F}(\vec{x'},\vec{y'})}{2}\\
			&\leq |\prodifs{c}|_{\infty}\norm{\vec{x}-\vec{x'}}{2}+\theta\left(\norm{\prodifs{F}(\vec{x},\vec{y})-\prodifs{F}(\vec{x'},\vec{y})}{2}+\norm{\prodifs{F}(\vec{x'},\vec{y})-\prodifs{F}(\vec{x'},\vec{y'})}{2}\right)\\
			&\leq |\prodifs{c}|_{\infty}\norm{\vec{x}-\vec{x'}}{2}+\theta|\prodifs{a}|\norm{\vec{x}-\vec{x'}}{2}+\theta|\prodifs{b}|_{\infty}\norm{\vec{y}-\vec{y'}}{2}\\
			&\leq \left(|\prodifs{c}|_{\infty}+\theta|\prodifs{a}|_{\infty}\right)\norm{\vec{x}-\vec{x'}}{2}+\theta|\prodifs{b}|_{\infty}\norm{\vec{y}-\vec{y'}}{2}\\
			&\leq a \norm{\vec{x}-\vec{x'}}{2}+\theta b \norm{\vec{y}-\vec{y'}}{2}\\
			&\leq s \left[\norm{\vec{x}-\vec{x'}}{2}+\theta\norm{\vec{y}-\vec{y'}}{2}\right]=s~\norm{(x,y)-(x',y')}{\theta}.
		\end{align*}
		Now, from given condition, we get $\theta<\frac{1-|\prodifs{c}|_{\infty}}{|\prodifs{a}|_{\infty}}$. So, $|\prodifs{c}|_{\infty}+\theta|\prodifs{a}|_{\infty}<1.$ Therefore, $a<1$. Also $b<1$. Hence $s<1$. This completes the proof.
	\end{proof}
	Since the IFS $\{(I\times\mathbb{R}^m;\prodifs{W}):~\myseries{1}{i_k}{N_k};\myseries{1}{k}{m}\}$ is contractive. Hence by Banach fixed point theorem it has an unique attractor $G$ (say). Now, we will prove that $G$ is the graph of a continuous product function from $I$ to $\mathbb{R}^m$, which interpolates the data set $\{((\prodpoint{x}{i}),(\prodpoint{y}{i}))\in I\times\mathbb{R}^m:~\myseries{1}{i_k}{N_k};\myseries{1}{k}{m}\}$.\\
	Let
	$$\mathscr{C}:=\left\{\profunc{f}:~f_k\in C[I_k]\quad\mbox{for}~\myseries{1}{k}{m}\right\},$$
	equip with the norm $\norm{\profunc{f}}{\infty}:=\max\left\{\norm{\profunc{f}(\vec{x})}{2}:~\vec{x}\in I\right\}$. Then the following result holds.
	\begin{thm}\label{rbnonfs}
		The norm $\norm{\cdot}{\infty}$ on $\mathscr{C}$ is equivalent to the norm $\norm{\cdot}{0}$, where the norm $\norm{\cdot}{0}$ on $\mathscr{C}$ is defined by 
		$$\norm{\profunc{f}}{0}:=\left\{\sum_{k=1}^{m}\norm{f_k}{\infty}^2\right\}^{\frac{1}{2}}.$$
		In particular, $$\frac{1}{\sqrt{m}}\norm{\profunc{f}}{0}\leq \norm{\profunc{f}}{\infty}\leq \norm{\profunc{f}}{0}.$$
	\end{thm}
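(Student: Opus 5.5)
The plan is to prove the two inequalities separately for an arbitrary $f=\profunc{f}\in\mathscr{C}$. Both rest on the pointwise identity
$$\norm{\profunc{f}(\vec{x})}{2}^2=\sum_{k=1}^m|f_k(x_k)|^2 \quad\text{for } \vec{x}=(x_1,\ldots,x_m)\in I.$$
This mirrors the proof of Theorem~\ref{evt}. There the upper bound came from a coordinatewise estimate, and the lower bound from choosing good points coordinate by coordinate. Since each $I_k$ is compact and each $f_k$ is continuous, all maxima are attained, so no $\epsilon$-argument should be needed.

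\textbf{Upper bound.} For every $\vec{x}\in I$ we have $|f_k(x_k)|\leq\norm{f_k}{\infty}$. Hence
$$\norm{\profunc{f}(\vec{x})}{2}\leq\left\{\sum_{k=1}^m\norm{f_k}{\infty}^2\right\}^{\frac12}=\norm{\profunc{f}}{0}.$$
Taking the maximum over $\vec{x}\in I$ gives $\norm{\profunc{f}}{\infty}\leq\norm{\profunc{f}}{0}$.

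\textbf{Lower bound.} Fix $k$ and pick $x_k^*\in I_k$ with $|f_k(x_k^*)|=\norm{f_k}{\infty}$. Choose the remaining coordinates arbitrarily, for instance $x_p=x_{p,0}$ for $p\neq k$, and call the resulting point $\vec{x}$. Dropping the nonnegative terms with $p\neq k$ from the identity gives
$$\norm{f_k}{\infty}=|f_k(x_k^*)|\leq\norm{\profunc{f}(\vec{x})}{2}\leq\norm{\profunc{f}}{\infty}.$$
Squaring this and summing over $k=1,\ldots,m$ yields $\norm{\profunc{f}}{0}^2\leq m\norm{\profunc{f}}{\infty}^2$, which is the left inequality.

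\textbf{Remarks.} One could sharpen the lower bound by placing all the maximizers simultaneously at $\vec{x}^*=(x_1^*,\ldots,x_m^*)$. This would even give equality $\norm{\cdot}{\infty}=\norm{\cdot}{0}$, but the stated weaker bound is all that is claimed, so I would present the coordinatewise argument above.

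The only step needing care is verifying that both expressions are genuinely norms on $\mathscr{C}$, since equivalence presupposes this. Positivity and homogeneity are immediate. For the triangle inequality of $\norm{\cdot}{0}$, I would apply Minkowski's inequality in $\mathbb{R}^m$ to the vector $(\norm{f_k}{\infty})_k$. I also note that the representation $f=\profunc{f}$ is determined by $f$ because each $I_k$ is nonempty, so $\norm{\cdot}{0}$ is well defined. Beyond this, I expect no real obstacle.
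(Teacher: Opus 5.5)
Your proof is correct and is essentially the paper's argument. You use the same pointwise upper bound, and for the lower bound you show $\norm{f_k}{\infty}\leq\norm{\profunc{f}}{\infty}$ for each $k$ and sum squares, with attained maxima (compactness of $I_k$) replacing the paper's $\epsilon$-near-maximizers. Your closing remark is also right: evaluating at the simultaneous maximizer $\vec{x}^*=(x_1^*,\ldots,x_m^*)$ gives $\norm{\profunc{f}}{\infty}=\norm{\profunc{f}}{0}$ exactly, so the factor $\frac{1}{\sqrt{m}}$ in the statement is not sharp.
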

	\begin{proof}
		Since $|f_k(x)|\leq \norm{f_k}{\infty}$ for all $x\in I_k$. Therefore,
		$$\norm{\profunc{f}(\vec{x})}{2}=\left\{\sum_{k=1}^{m}|{f_k}(x_k)|^2\right\}^{\frac{1}{2}}\leq \left\{\sum_{k=1}^{m}\norm{f_k}{\infty}^2\right\}^{\frac{1}{2}}\quad\mbox{for all}~\vec{x}\in I.$$
		So, by taking supremum over all $\vec{x}\in I$, we get
		\begin{equation} 
			\norm{\profunc{f}}{\infty}\leq \left\{\sum_{k=1}^{m}\norm{f_k}{\infty}^2\right\}^{\frac{1}{2}}=\norm{\profunc{f}}{0}. \label{1strel}
		\end{equation}
		To prove the other part, consider $\epsilon>0$, then by the definition of $\norm{f_j}{\infty}$, there exists $x_j\in I_j$, such that 
		$$\norm{f_j}{\infty}-\epsilon\leq |f_j(x_j)|<\norm{f_j}{\infty}\quad\mbox{for}~\myseries{1}{j}{m}.$$
		Let $\myvec{\vec{x}}{x}{m}$. Since $$|f_j(x_j)|\leq \left\{\sum_{k=1}^{m}|{f_k}(x_k)|^2\right\}^{\frac{1}{2}}=\norm{\profunc{f}(\vec{x})}{2}\leq \norm{\profunc{f}}{\infty}.$$ 
		Therefore, 
		$$\norm{f_j}{\infty}-\epsilon\leq \norm{\profunc{f}}{\infty}\quad\mbox{for all}~ \myseries{1}{j}{m}.$$
		Hence
		$$\left\{\sum_{k=1}^{m}(\norm{f_k}{\infty}-\epsilon)^2\right\}^{\frac{1}{2}}\leq \sqrt{m}\norm{\profunc{f}}{\infty}.$$
		This is true for all $\epsilon>0$ . Hence by taking $\epsilon\rightarrow 0$ in the above expression, we get 
		\begin{equation}
			\frac{1}{\sqrt{m}}\norm{\profunc{f}}{0}\leq \norm{\profunc{f}}{\infty}. \label{2ndrel}
		\end{equation}
		Hence from (\ref{1strel}) and (\ref{2ndrel}), we get the required result.
	\end{proof}
	
	Now, by Theorem~\ref{rbnonfs}, we see that $(\mathscr{C},\norm{\cdot}{\infty})$ is a complete space. We define the closed subspace $$\mathscr{T}:=\left\{\profunc{f}:~f_k\in C_{y_{k,0},y_{k,N_k}}[I_k];~\myseries{1}{k}{m} \right\}\quad\mbox{of}~(\mathscr{C},\norm{\cdot}{\infty}).$$ 
	
	The following theorem shows the existence of the product fractal function.
	\begin{thm}
		The attractor $G$ of the IFS $\{(I\times\mathbb{R}^m;\prodifs{W}):~\myseries{1}{i_k}{N_k};\myseries{1}{k}{m}\}$, associated with the data set $\{((\prodpoint{x}{i}),(\prodpoint{y}{i}))\in I\times\mathbb{R}^m:~\myseries{1}{i_k}{N_k};\myseries{1}{k}{m}\}$ is the graph of the continuous product function $\profunc{g}: I\rightarrow\mathbb{R}^m$, where $\profunc{g}\in\mathscr{T}$. \label{graphofcpf}
	\end{thm}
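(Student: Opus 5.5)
The plan is to reduce everything to the coordinate fractal interpolation functions $g_k$ given by Theorem~\ref{ftoff}. For each $k$ these satisfy $g_k\in C_{y_{k,0},y_{k,N_k}}[I_k]$, $g_k(x_{k,i_k})=y_{k,i_k}$, and $G(g_k)$ is the attractor of $\{(I_k\times\mathbb{R};W_{k,i_k})\}$. The candidate is the product function $g=g_1\times g_2\times\cdots\times g_m$. It lies in $\mathscr{T}$, it is continuous, and it interpolates the product data. Since $G$ is the unique fixed point of the Hutchinson operator $\mathcal{W}(B)=\bigcup_{i_1,\ldots,i_m}W_{i_1i_2\cdots i_m}(B)$ on $\mathscr{H}(I\times\mathbb{R}^m)$ (contractive by the preceding lemma with respect to $\norm{\cdot}{\theta}$), it suffices to show that the graph $G(g)=\{(\vec{x},g(\vec{x})):\vec{x}\in I\}$ is compact and satisfies $\mathcal{W}(G(g))=G(g)$. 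Compactness is immediate: $G(g)$ is the continuous image of the compact set $I$.

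\textbf{Step 1: a product RB operator.} Define $T:\mathscr{T}\to\mathscr{T}$ by $T(f_1\times\cdots\times f_m)=T_1f_1\times\cdots\times T_mf_m$, with $T_k$ as in (\ref{prodrbop}). Explicitly, for $\vec{x}\in I_{i_1i_2\cdots i_m}$,
$$(Tf)(\vec{x})=F_{i_1i_2\cdots i_m}\bigl(L_{i_1i_2\cdots i_m}^{-1}(\vec{x}),f(L_{i_1i_2\cdots i_m}^{-1}(\vec{x}))\bigr).$$
Each $T_k$ is a $\max_{i_k}|b_{k,i_k}|$-contraction on $C_{y_{k,0},y_{k,N_k}}[I_k]$. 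Hence $\norm{Tf-Th}{0}\le b\norm{f-h}{0}$, and by Theorem~\ref{rbnonfs} some iterate of $T$ is a contraction for $\norm{\cdot}{\infty}$. Its unique fixed point is therefore $g=g_1\times\cdots\times g_m$, since each $g_k$ is fixed by $T_k$. This gives the functional equation $g(L_{i_1\cdots i_m}(\vec{x}))=F_{i_1\cdots i_m}(\vec{x},g(\vec{x}))$ for all $\vec{x}\in I$ and all multi-indices.

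\textbf{Step 2: invariance of the graph.} By the functional equation,
$$W_{i_1\cdots i_m}(G(g))=\{(L_{i_1\cdots i_m}(\vec{x}),g(L_{i_1\cdots i_m}(\vec{x}))):\vec{x}\in I\}=G\bigl(g|_{I_{i_1\cdots i_m}}\bigr),$$
because $L_{i_1\cdots i_m}$ is a homeomorphism of $I$ onto $I_{i_1\cdots i_m}$. The boxes $I_{i_1\cdots i_m}=\prod_k I_{k,i_k}$ cover $I$ (Lemma~\ref{pl}), so taking the union gives $\mathcal{W}(G(g))=G(g)$. Alternatively, one can avoid $T$ altogether: apply Theorem~\ref{rbho} and Lemma~\ref{pl} coordinatewise after reordering coordinates, $(\vec{x},\vec{y})\leftrightarrow((x_1,y_1),\ldots,(x_m,y_m))$. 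This reordering is a homeomorphism, and under it $W_{i_1\cdots i_m}$ becomes $W_{1,i_1}\times\cdots\times W_{m,i_m}$. By Theorem~\ref{rbtaoifs} the attractor is then $\prod_k G(g_k)$, which corresponds exactly to $G(g)$. I would present this second route as the main argument and use Step 1 as the RB-operator characterization.

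\textbf{Step 3: uniqueness and identification.} The attractor is the fixed point of $\mathcal{W}$ in $\mathscr{H}(I\times\mathbb{R}^m)$, and $G(g)$ is a compact fixed point. By uniqueness in Theorem~\ref{fpt}, $G=G(g)$.

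The main obstacle is the reordering in the alternative route of Step 2: Theorem~\ref{rbtaoifs} is stated for the product metric $d_0$, while the attractor $G$ is obtained with the metric $\norm{\cdot}{\theta}$. The resolution is that the attractor is characterized purely set-theoretically, as the unique nonempty compact set $B$ with $\mathcal{W}(B)=B$. Both metrics induce the Euclidean topology, so compactness and uniqueness do not depend on which metric is used. The fixed-point set obtained in either metric is therefore the same, and it equals the image of $\prod_k G(g_k)$, namely $G(g)$.
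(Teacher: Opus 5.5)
Your proposal is correct. Steps 1--3 follow the same skeleton as the paper's proof: an RB operator on $\mathscr{T}$, its fixed point $g$, the functional equation $g(L_{i_1\cdots i_m}(\vec{x}))=F_{i_1\cdots i_m}(\vec{x},g(\vec{x}))$, invariance of the graph, and uniqueness of the attractor. You organize this differently, and you also add a genuinely different main route.

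The paper defines $T$ box by box via (\ref{Rbop}). It therefore has to check by case analysis that the formulas agree on shared faces of the boxes $I_{i_1\cdots i_m}$; it does four cases and calls the rest ``similar''. Only then does it derive $T=T_1\times\cdots\times T_m$ in (\ref{pofrboper}). It proves contractivity directly in $\norm{\cdot}{\infty}$ from the Lipschitz bound of $F_{i_1\cdots i_m}$ in $\vec{y}$, and makes the identification with the coordinate FIFs explicit only in Corollary~\ref{proditerfs}.

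Taking $T:=T_1\times\cdots\times T_m$ as the definition gives well-definedness, continuity and interpolation for free from Theorem~\ref{ftoff}. It also yields the theorem and Corollary~\ref{proditerfs} at once. In your version the iterate/contraction argument is not even needed, since you only use $Tg=g$, i.e.\ $T_kg_k=g_k$.

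Your reordering $(\vec{x},\vec{y})\mapsto((x_1,y_1),\ldots,(x_m,y_m))$ turns the IFS into exactly the product IFS of Section~\ref{pohm}. The statement then follows from Lemma~\ref{pl} (or Theorem~\ref{rbho}) and the one-dimensional FIF theory, with no RB operator at all. This makes transparent that the product FIF is simply the product of the coordinate FIFs.

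Your handling of the metric bookkeeping is correct. You do not actually need Theorem~\ref{rbtaoifs} itself, which concerns the fixed point of $T_0$ on $\mathscr{H}$ with $H_0$. The set identity of Lemma~\ref{pl} already shows that $\prod_k G(g_k)$ is a nonempty compact invariant set. Uniqueness of such sets follows from the contractivity lemma for $\norm{\cdot}{\theta}$; this is a topological fact, unaffected by the homeomorphic reordering.

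In exchange, the paper's route characterizes $g$ intrinsically as the unique fixed point of an explicit contraction on $(\mathscr{T},\norm{\cdot}{\infty})$.
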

	\begin{proof}
		Let us define the RB-operator $T:\mathscr{T}\rightarrow\mathscr{T}$ as follows: 
		\begin{equation}
			T(\profunc{f})(\vec{x})=\prodifs{F}\left(\prodifs{L^{-1}}(\vec{x}),\profunc{f}(\prodifs{L^{-1}}(\vec{x}))\right),\label{Rbop}
		\end{equation}
		where $\vec{x}\in\prodifs{I}$. Now, we prove that $T$ is well-defined. Since from the definition of $\prodifs{L}$, we get $$\prodifs{L^{-1}}(\vec{x})=\left(L_{1,i_1}^{-1}(x_1),L_{2,i_2}^{-1}(x_2),\ldots,L_{m,i_m}^{-1}(x_m)\right)\quad\mbox{for all}~ \myvec{\vec{x}}{x}{m}\in\prodifs{I}.$$
		Therefore,
		\begin{align*}
			T(\profunc{f})(\vec{x})=&\prodifs{F}\left(\prod_{k=1}^{m}L_{k,i_k}^{-1}(x_k),\prod_{k=1}^{m}f_k(L_{k,i_k}^{-1}(x_k))\right)\\
			=&\prod_{k=1}^{m}F_{k,i_k}\left(L_{k,i_k}^{-1}(x_k),f_k(L_{k,i_k}^{-1}(x_k))\right),\quad[\mbox{using (\ref{prod of F}})].
		\end{align*}
		Let $\myvec{\vec{x}}{x}{m}\in\partial\prodifs{I}$, where $\partial\prodifs{I}$, denotes the boundary of $\prodifs{I}$. Now, $\vec{x}$ may be anywhere on the boundary $\partial\prodifs{I}$ but we  deal with the case, where it is an intersection point. Let $\vec{x}\in I_{i_1i_2\cdots i_p\cdots,i_m}\cap I_{i_1i_2\cdots i_p+1\cdots,i_m}\cap I_{i_1i_2\cdots i_q\cdots,i_m}\cap I_{i_1i_2\cdots i_q+1\cdots,i_m}\cap\cdots$(up to a finite number of intersection), where $i_p\in\{1,2,\ldots,N_p-1\}$ and $i_q\in\{1,2,\ldots,N_q-1\}$. For simplicity, let $x_p\in I_{p,i_p}\cap I_{p,i_p+1}$ and $x_q\in I_{q,i_q}\cap I_{q,i_q+1}$. This is possible if only if $x_p=x_{p,i_p}$ and $x_q=x_{q,i_q}$. Without loss of generality, we assume that $p<q$. Here we  deal with the following four possible cases.\\ 
		
		Case 1. If $x_{p,i_p}\in I_{p.i_p}$ and $x_{q,i_q}\in I_{q,i_q}$, then
		\begin{align*}
			T(\profunc{f}&)(\vec{x})=\left(F_{1,i_1}(L_{1,i_1}^{-1}(x_1),f_1(L_{1,i_1}^{-1}(x_1))),\ldots,F_{p,i_p}(L_{p,i_p}^{-1}(x_{p,i_p}),f_p(L_{p,i_p}^{-1}(x_{p,i_p}))),\right.\\ &\left.\ldots,F_{q,i_q}(L_{q,i_q}^{-1}(x_{q,i_q}),f_q(L_{q,i_q}^{-1}(x_{q,i_q}))),\ldots,F_{m,i_m}(L_{m,i_m}^{-1}(x_m),f_m(L_{m,i_m}^{-1}(x_m)))\right)\\
			&=\left(F_{1,i_1}\left(L_{1,i_1}^{-1}(x_1),f_1(L_{1,i_1}^{-1}(x_1))\right),\ldots,y_{p,i_p},\right.\\
			&\left.\ldots,y_{q,i_q},\ldots,F_{m,i_m}\left(L_{m,i_m}^{-1}(x_m),f_m(L_{m,i_m}^{-1}(x_m))\right)\right).
		\end{align*}  
		Case 2. If $x_{p,i_p}\in I_{p.i_p}$ and $x_{q,i_q}\in I_{q,i_q+1}$, then   
		\begin{align*}
			T(\profunc{f}&)(\vec{x})=\left(F_{1,i_1}(L_{1,i_1}^{-1}(x_1),f_1(L_{1,i_1}^{-1}(x_1))),\ldots,F_{p,i_p}(L_{p,i_p}^{-1}(x_{p,i_p}),f_p(L_{p,i_p}^{-1}(x_{p,i_p}))),\right.\\ &\left.\ldots,F_{q,i_q+1}(L_{q,i_q+1}^{-1}(x_{q,i_q}),f_q(L_{q,i_q+1}^{-1}(x_{q,i_q}))),\ldots,F_{m,i_m}(L_{m,i_m}^{-1}(x_m),f_m(L_{m,i_m}^{-1}(x_m)))\right)\\
			&=\left(F_{1,i_1}\left(L_{1,i_1}^{-1}(x_1),f_1(L_{1,i_1}^{-1}(x_1))\right),\ldots,y_{p,i_p},\right.\\
			&\left.\ldots,y_{q,i_q},\ldots,F_{m,i_m}\left(L_{m,i_m}^{-1}(x_m),f_m(L_{m,i_m}^{-1}(x_m))\right)\right).
		\end{align*} 
		Case 3. If $x_{p,i_p}\in I_{p.i_p+1}$ and $x_{q,i_q}\in I_{q,i_q}$, then
		\begin{align*}
			T(\profunc{f}&)(\vec{x})=\left(F_{1,i_1}(L_{1,i_1}^{-1}(x_1),f_1(L_{1,i_1}^{-1}(x_1))),\ldots,F_{p,i_p+1}(L_{p,i_p+1}^{-1}(x_{p,i_p}),f_p(L_{p,i_p+1}^{-1}(x_{p,i_p}))),\right.\\ &\left.\ldots,F_{q,i_q}(L_{q,i_q}^{-1}(x_{q,i_q}),f_q(L_{q,i_q}^{-1}(x_{q,i_q}))),\ldots,F_{m,i_m}(L_{m,i_m}^{-1}(x_m),f_m(L_{m,i_m}^{-1}(x_m)))\right)\\
			&=\left(F_{1,i_1}\left(L_{1,i_1}^{-1}(x_1),f_1(L_{1,i_1}^{-1}(x_1))\right),\ldots,y_{p,i_p},\right.\\
			&\left.\ldots,y_{q,i_q},\ldots,F_{m,i_m}\left(L_{m,i_m}^{-1}(x_m),f_m(L_{m,i_m}^{-1}(x_m))\right)\right).
		\end{align*}   
		Case 4. If $x_{p,i_p}\in I_{p.i_p+1}$ and $x_{q,i_q}\in I_{q,i_q+1}$, then 
		\begin{align*}
			T(\profunc{f}&)(\vec{x})=\left(F_{1,i_1}(L_{1,i_1}^{-1}(x_1),f_1(L_{1,i_1}^{-1}(x_1))),\ldots,F_{p,i_p+1}(L_{p,i_p+1}^{-1}(x_{p,i_p}),f_p(L_{p,i_p+1}^{-1}(x_{p,i_p}))),\right.\\ &\left.\ldots,F_{q,i_q+1}(L_{q,i_q+1}^{-1}(x_{q,i_q}),f_q(L_{q,i_q+1}^{-1}(x_{q,i_q}))),\ldots,F_{m,i_m}(L_{m,i_m}^{-1}(x_m),f_m(L_{m,i_m}^{-1}(x_m)))\right)\\
			&=\left(F_{1,i_1}\left(L_{1,i_1}^{-1}(x_1),f_1(L_{1,i_1}^{-1}(x_1))\right),\ldots,y_{p,i_p},\right.\\
			&\left.\ldots,y_{q,i_q},\ldots,F_{m,i_m}\left(L_{m,i_m}^{-1}(x_m),f_m(L_{m,i_m}^{-1}(x_m))\right)\right).
		\end{align*}
		From above, we see that the values of $T(\profunc{f})(\vec{x})$ are same in all the four cases. Similarly, all other possibilities can be checked to conclude that $T(\profunc{f})(\vec{x})$ is well
		defined on the boundary of $\prodifs{I}$. Also, from (\ref{prodrbop}), we have
		\begin{align*}
			T(\profunc{f})(\vec{x})=\prod_{k=1}^{m}F_{k,i_k}\left(L_{k,i_k}^{-1}(x_k),f_k(L_{k,i_k}^{-1}(x_k))\right)=\prod_{k=1}^{m}(T_{k}f_k)(x_k).
		\end{align*} 
		Therefore,
		\begin{equation}
			T(\profunc{f})=T_1f_1\times T_2f_2\times\cdots\times T_mf_m.
			\label{pofrboper}
		\end{equation}
		Hence $T$ is well-defined. Also, if $\vec{x}=(\prodpoint{x}{i})\in \prodifs{I}$, then
		\begin{align*}
			T(\profunc{f})(\vec{x})=&\prod_{k=1}^{m}F_{k,i_k}\left(L_{k,i_k}^{-1}(x_{k,i_k}),f_k(L_{k,i_k}^{-1}(x_{k,i_k}))\right)=\prod_{k=1}^{m}F_{k,i_k}\left(x_{k,0},f_k(x_{k,0})\right)\\
			=&\prod_{k=1}^{m}F_{k,i_k}\left(x_{k,0},y_{k,0}\right)=\prod_{k=1}^{m}y_{k,i_k}.
		\end{align*}
		So, $T(\profunc{f})$ interpolates data points  $$\{((\prodpoint{x}{i}),(\prodpoint{y}{i}))\in I\times\mathbb{R}^m:~\myseries{1}{i_k}{N_k};\myseries{1}{k}{m}\}.$$
		Next, we prove that $T$ is a contraction map on $\mathscr{T}$.
		For that, let $\profunc{f},\profunc{f'}\in\mathscr{T}$, and $\myvec{\vec{x}}{x}{m}\in\prodifs{I}$. Then from (\ref{cont.of F}) and (\ref{Rbop}), we get
		\begin{align*}
			\norm{T(\profunc{f})&(\vec{x})-T(\profunc{f'})(\vec{x})}{2}\\ &\leq|\prodifs{b}|_{\infty}\norm{\profunc{f}(\prodifs{L^{-1}}(\vec{x}))-\profunc{f'}(\prodifs{L^{-1}}(\vec{x}))}{2}\\
			&\leq |\prodifs{b}|_{\infty}\norm{\profunc{f}-\profunc{f'}}{\infty}\\
			&\leq b \norm{\profunc{f}-\profunc{f'}}{\infty},
		\end{align*}
		where  $b=\max\{|\prodifs{b}|_{\infty}:~\myseries{1}{i_k}{N_k};\myseries{1}{k}{m}\}<1$. Now, by taking supremum over all $\vec{x}\in I$, we get
		$$\norm{T(\profunc{f})-T(\profunc{f'})}{\infty}\leq b \norm{\profunc{f}-\profunc{f'}}{\infty}.$$
		This shows that $T$ is a contraction map on the complete space $\mathscr{T}$. Hence by Banach fixed point theorem, $T$ has an unique fixed point $\profunc{g}$ in $\mathscr{T}$. Finally, we see that $G$ is the graph of the function $\profunc{g}$. Let $\tilde{G}=\left\{\left(\vec{x},\profunc{g}(\vec{x})\right):~\vec{x}\in I\right\}$. Then
		\begin{align*}
			&\bigcup\left\{\prodifs{W}(\tilde{G}):~\myseries{1}{i_k}{N_k};\myseries{1}{k}{m}\right\}\\
			&=\bigcup \left\{\prodifs{W}((\vec{x},\profunc{g}(\vec{x})):~\vec{x}\in I,~\myseries{1}{i_k}{N_k};\myseries{1}{k}{m} \right\}\\
			&=\bigcup\left\{\left(\prodifs{L}(\vec{x}),\prodifs{F}(\vec{x},\profunc{g}(\vec{x}))\right):~\vec{x}\in I,~\myseries{1}{i_k}{N_k};\myseries{1}{k}{m}\right\}.
		\end{align*}
		Now, since $\profunc{g}$ is the fixed point of $T$. Hence from (\ref{Rbop}), we get 
		$$\profunc{g}(\prodifs{L}(\vec{x}))=\prodifs{F}(\vec{x},\profunc{g}(\vec{x})).$$
		Therefore,
		\begin{align*}
			&\bigcup\left\{\prodifs{W}(\tilde{G}):~\myseries{1}{i_k}{N_k};\myseries{1}{k}{m}\right\}\\
			&=\bigcup\left\{\left(\prodifs{L}(\vec{x}),\profunc{g}(\prodifs{L}(\vec{x}))\right):~\vec{x}\in I,~\myseries{1}{i_k}{N_k};\myseries{1}{k}{m}\right\}\\	
			&=\left\{\left(\vec{x},\profunc{g}(\vec{x})\right):~\vec{x}\in I\right\}\\
			&=\tilde{G}.
		\end{align*}
		Hence by uniqueness of the attractor, we get $G=\tilde{G}$.
	\end{proof}
	\begin{dfn}
		The function $\profunc{g}$ is called the product fractal interpolation function (PFIF) corresponding to the IFS \ref{noforpdifew}, which interpolates the data set\\ $\{((\prodpoint{x}{i}),(\prodpoint{y}{i}))\in I\times\mathbb{R}^m:~\myseries{1}{i_k}{N_k};\myseries{1}{k}{m}\}$.
	\end{dfn}
	\begin{cor}
		If $f_k$ are the FIF corresponding to the IFS $\{(I_k\times\mathbb{R};W_{k,i_k}):\myseries{1}{i_k}{N_k}\}$ which interpolates the data sets $\{(x_{k,i_k},y_{k,i_k})\in\mathbb{R}^2:\myseries{0}{i_k}{N_k}\}$ for $k=1,2,\ldots,m$, then $\profunc{f}$ is the PFIF corresponding to the IFS $\{(I\times\mathbb{R}^m;\prodifs{W}):~\myseries{1}{i_k}{N_k};\myseries{1}{k}{m}\}$ which interpolates the data set $\{((\prodpoint{x}{i}),(\prodpoint{y}{i}))\in I\times\mathbb{R}^m:~\myseries{1}{i_k}{N_k};\myseries{1}{k}{m}\}$. \label{proditerfs}
	\end{cor}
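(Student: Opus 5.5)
The strategy is to use uniqueness of fixed points: $\profunc{f}$ is the unique fixed point of the RB-operator $T$ on $\mathscr{T}$ from the proof of Theorem~\ref{graphofcpf}. We therefore show that $\profunc{f}$ lies in $\mathscr{T}$ and satisfies $T(\profunc{f})=\profunc{f}$. The PFIF is then $\profunc{f}$, and its graph is the attractor $G$.

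First, $\profunc{f}\in\mathscr{T}$. By Theorem~\ref{ftoff}, each $f_k$ is continuous on $I_k$ and satisfies $f_k(x_{k,i_k})=y_{k,i_k}$ for all $0\leq i_k\leq N_k$. In particular $f_k(x_{k,0})=y_{k,0}$ and $f_k(x_{k,N_k})=y_{k,N_k}$, so $f_k\in C_{y_{k,0},y_{k,N_k}}[I_k]$. By the definition of $\mathscr{T}$, the product $\profunc{f}$ belongs to $\mathscr{T}$.

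Second, we use the factorization (\ref{pofrboper}), which gives $T(\profunc{f})=T_1f_1\times T_2f_2\times\cdots\times T_mf_m$, where $T_k$ is the one-dimensional RB-operator (\ref{prodrbop}). Theorem~\ref{ftoff}(2) says each $f_k$ is the fixed point of $T_k$, so $T_kf_k=f_k$. Hence $T(\profunc{f})=\profunc{f}$. The proof of Theorem~\ref{graphofcpf} shows that $T$ is a contraction on the complete space $\mathscr{T}$ with factor $b<1$, so $T$ has a unique fixed point. That fixed point is therefore $\profunc{f}$, i.e.\ $\profunc{g}=\profunc{f}$.

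Third, Theorem~\ref{graphofcpf} shows that the attractor $G$ of the product IFS is the graph of this unique fixed point. So $G$ is the graph of $\profunc{f}$, and $\profunc{f}$ is the PFIF. The interpolation property can be read off coordinatewise: at $\vec{x}=(\prodpoint{x}{i})$ we get $\profunc{f}(\vec{x})=(f_1(x_{1,i_1}),\ldots,f_m(x_{m,i_m}))=(y_{1,i_1},\ldots,y_{m,i_m})$.

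The only delicate point is that (\ref{pofrboper}) holds on all of $I$, including the shared boundaries of the cells $\prodifs{I}$. There, $T$ must be single-valued, and each coordinate of $T(\profunc{f})(\vec{x})$ must coincide with $(T_kf_k)(x_k)$ no matter which adjacent cell is used. This was checked in the well-definedness part of the proof of Theorem~\ref{graphofcpf}, and it also follows directly from the one-dimensional well-definedness of each $T_k$. So I expect no real obstacle: the corollary is essentially a uniqueness argument built on the product structure of $T$.
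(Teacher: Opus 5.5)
Your proposal is correct and follows the paper's own argument. The factorization (\ref{pofrboper}) together with $T_kf_k=f_k$ gives $T(\profunc{f})=\profunc{f}$, and uniqueness of the fixed point of the contraction $T$ on $\mathscr{T}$ then forces $\profunc{f}=\profunc{g}$; your explicit checks that $\profunc{f}\in\mathscr{T}$ and of the coordinatewise interpolation fill in steps the paper leaves implicit.
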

	\begin{proof}
		By the given condition, $T_kf_k=f_k$, for $k=1,2,\ldots,m$. Hence from (\ref{pofrboper}), we have 
		$$T(\profunc{f})=T_1f_1\times T_2f_2\times\cdots\times T_mf_m=\profunc{f}.$$ 
		Therefore, by uniqueness of the fixed point it follows that
		$$\profunc{f}=\profunc{g}.$$
	\end{proof}

\end{document}